\def\baselinestretch{1.1}
\newtheorem{definition}{Definition}[section]
\newtheorem{theorem}[definition]{Theorem}
\newtheorem{proposition}[definition]{Proposition}
\newtheorem{corollary}[definition]{Corollary}
\newtheorem{remarkth}[definition]{Remark}
\newtheorem{example}[definition]{Example}
\newenvironment{remark}{\begin{remarkth}\upshape}{\hfill$\diamond$\end{remarkth}}
\newcommand{\parder}[2]{\frac{\partial #1}{\partial #2}}
\newcommand{\tparder}[2]{\partial #1/\partial #2}
\newcommand{\parderr}[3]{\frac{\partial^2 #1}{\partial #2\partial #3}}
\newcommand{\lcf}{\lbrack\! \lbrack}
\newcommand{\rcf}{\rbrack\! \rbrack}
\renewcommand{\d}{\mathrm{d}^\circ}
\def\beq{\begin{equation}}
\def\eeq{\end{equation}}
\def\bea{\begin{eqnarray}}
\def\eea{\end{eqnarray}}
\def\beann{\begin{eqnarray*}}
\def\eeann{\end{eqnarray*}}
\def\ben{\begin{enumerate}}
\def\een{\end{enumerate}}
\def\bit{\begin{itemize}}
\def\eit{\end{itemize}}
\newcommand{\al}{\mathfrak{g}}
\newcommand{\br}[2]{\lbrack\!\lbrack#1,#2\rbrack\!\rbrack}
\def\derpar#1#2{\frac{\partial{#1}}{\partial{#2}}}
\newcommand\restr[2]{{% we make the whole thing an ordinary symbol
  \left.\kern-\nulldelimiterspace % automatically resize the bar with \right
  #1 % the function
  %\vphantom{\big|} % pretend it's a little taller at normal size
  \right|_{#2} % this is the delimiter
}}
\newcommand{\R}{\mathbb{R}}
\newcommand{\C}{\mathcal{C}}
\renewcommand{\d}{\mathrm{d}}
\newcommand{\Id}{\mathrm{Id}}
\newcommand{\E}{\mathcal{E}}
\newcommand{\vf}{\X}
\newcommand{\Cinfty}{\mathscr{C}^\infty}
\newcommand{\X}{\mathfrak{X}}
\newcommand{\Rb}{\mathcal{R}}
\def\d{\mathrm{d}}
\def\r{\R}
\def\rq{ \r   \times Q}
\def\er{E\times\r}
\def\ers{E^* \times \r}
\let\ds\displaystyle
 \def\bcr{\begin{color}{red}}
\def\bcb{\begin{color}{blue}}
\def\enc{\end{color}}
\def\Lec{\mathop{Leg}} % Aplicaci\'{o}n de Legendre
\def\derpar#1#2{\displaystyle\frac{\partial{#1}}{\partial{#2}}}
\def\derpars#1#2#3{\displaystyle\frac{\partial^2{#1}}{\partial{#2}{\partial
{#3}}}}
\def\vf{\mathfrak{X}}% Campos de vectores
\def\d{{\rm d}}
\def\Cinfty{{\rm C}^\infty} % Espacio de funciones de clase infinito
\def\tol{\longrightarrow}
\def\tec{\mathcal{T}^E(E \times \r)}
\def\tech{\mathcal{T}^E(E^*  \times \r)}
\def\qed{\ifvmode\Realemovelastskip\fi
{\unskip\nobreak\hfil\penalty50\hbox{}\nobreak\hfil \hbox{\vrule
height1.2ex width1.2ex}\parfillskip=0pt \finalhyphendemerits=0
\par\smallskip}}
\def\qed{\ifvmode\removelastskip\fi
{\unskip\nobreak\hfil\penalty50\hbox{}\nobreak\hfil \hbox{\vrule
height1.2ex width1.2ex}\parfillskip=0pt \finalhyphendemerits=0
\par\smallskip}}
\title{\sf
Contact formalism for dissipative mechanical systems \\on Lie algebroids
}
\author{\sffamily 
Alexandre Anahory Simoes$^1$,\thanks{alexandre.anahory@ie.edu\quad ORCID: 0000-0003-4644-876X}
Leonardo Colombo$^2$,\thanks{leonardo.colombo@csic.es\quad ORCID:0000-0001-6493-6113 }
\\[0.1ex]
\sffamily 
Manuel de Le\'on$^{3,4}$, \thanks{mdeleon@usc.es\quad ORCID: 0000-0002-8028-2348}\,  
Modesto Salgado$^{5,6}$\thanks{modesto.salgado@usc.es\quad ORCID: 0000-0003-3982-1845}\,  
and 
Silvia Souto$^{5,6}$\thanks{silviasouto.perez@usc.es\quad ORCID: 0000-0003-0755-1211}
\\[1ex]
\\[0.1ex]
\normalsize\itshape\sffamily 
$^1$ IE School of Science and Technology, Madrid, Spain
\\[0.1ex]
\normalsize\itshape\sffamily 
$^2$Centre for Automation and Robotics (CSIC-UPM), Arganda del Rey, Spain
\\[0.1ex]
\normalsize\itshape\sffamily 
$^3$Instituto de Ciencias Matemáticas (CSIC), Madrid, Spain
\\[0.1ex]
\normalsize\itshape\sffamily 
$^4$Real Academia de Ciencias, Madrid, Spain
\\[0.1ex]
\normalsize\itshape\sffamily 
$^5$Departamento de Matemáticas, Facultade de Matemáticas,
\\[0.1ex]
\normalsize\itshape\sffamily 
Universidade de Santiago de Compostela, Spain
\\[0.1ex]
\normalsize\itshape\sffamily
$^6$Centro de Investigación y Tecnología Matemática de Galicia (CITMAga), Spain
}
\date{\sffamily   2022}
\begin{document}

\date{\today}

%\subjclass[2000]{70S05, 70S10, 53D05}

%\keywords{Symmetries, Conservation laws, Noether theorem,
%Lagrangian field theories, $k$-symplectic manifolds}

\maketitle

\begin{abstract}
 In this paper, we introduce a geometric description of  contact Lagrangian and Hamiltonian systems on Lie algebroids in the framework of contact geometry, using the theory of prolongations. We discuss the relation between Lagrangian and Hamiltonian settings through a convenient notion of Legendre transformation. We also discuss the Hamilton-Jacobi problem in this framework and introduce the notion of a Legendrian Lie subalgebroid of a contact Lie algebroid.
\end{abstract}

%%%%%%%%%%%%%%%%%%%%%%%%%%%%%%%%%%%%%%%%%%%%%%%%%%%%%%%%%%%%%%%%

\noindent\textbf{Keywords:} 
Contact geometry, dissipative systems, Lie algebroids, Herglotz equations.\\

\noindent\textbf{MSC\,2020 codes:} 37J55, 53D10, 37C79, 37J37, 70H03, 70H05, 70H20

%%%%%%%%%%%%%%%%%%%%%%%%%%%%%%%%%%%%%

\pagestyle{myheadings}
\markright{\small\sf 
{\it A. Anahory, L. Colombo, M. de Le\'on, M. Salgado \textit{\&} S. Souto} ---
Contact formalism on Lie algebroids}% in the Lagrangian $k$-contact formalism}

{\setcounter{tocdepth}{2}
\def\baselinestretch{1}
\small
% hack per a eliminar l'espai vertical 1em
\def\addvspace#1{\vskip 1pt}
\parskip 0pt plus 0.1mm
%\tableofcontents
}

\section{Introduction}\label{intro}

The study of contact Hamiltonian systems has been experiencing enormous interest in recent years, due to their applications in fields such as thermodynamics, cosmology and neuroscience, to name but a few \cite{ALLM-2020, chossat, GB-2019, MNSS-2021, petitot} (see also \cite{BCT-2017, Brave-2017, CCM-2018, CdLL22, LGL-2022, LGLMR-2021, LJL-2020, LL-19, ML1-2019, ML-2020, MLMR-2021, LCdL-ACC} and the references therein). Their key properties lie in the fact that they model dissipative systems, as opposed to symplectic Hamiltonian systems, which serve as conservative models. Although familiar in thermodynamics (the geometric model is a contact manifold and equilibrium states are interpreted as Legendrian submanifolds), they were mostly used in their Hamiltonian formalism. However, the recovery of Herglotz's variational principle \cite{Her1930} (a generalisation of Hamilton's principle) has allowed the development of a Lagrangian formalism, which corresponds to what in physics are called action-dependent Lagrangians.

This interest has led to the extension of Lagrangian and Hamiltonian contact formalisms to other geometric contexts, such as Lie algebroids. This extension, already known in the case of symplectic Lagrangian and Hamiltonian systems, is not a mere mathematical formalism, but proves to be very useful for treating, for example, systems with symmetries, where the reduced system is no longer defined on a tangent or cotangent bundles but on a quotient of them. Indeed, the Lie algebroid context is a unifying concept (see \cite{AB-Med, C-JGM, CLMMM-2006, LMM-2005, LMSV-09, LS-2012, MV-10, Eduardoho, MC-2008, Tom, SMM-2002(2)}); the goal is to develop the program proposed by A. Weinstein in the early 1990s \cite{Weins-1996}.

There are two ways to extend these contact formalisms to algebroids. One is to extend the canonical Jacobi structure on $T^*Q \times \r$ to the case of $E^* \times \r$, where $E^*$ is the dual vector bundle of an algebroid $E$ over the configuration space $Q$. We have developed this approach in \cite{ACLSS-2023}. The advantage of this method is its simplicity, and the disadvantage is that we do not recover a contact structure, but a Jacobi structure (we also do not get a direct Lagrangian formulation).

The second method is to use the notion of prolongation of a Lie algebroid \cite{LMM-2005, Mack-1987, Mart-2001, Mart-2001(2), Mart-2005}. This has allowed us to define contact Lie algebroids, using the differential naturally associated to it, and therefore also the concept of a Legendrian Lie subalgebroid. We obtain the Euler-Lagrange and Hamilton equations, and relate them by a Legendre transformation. As in the case of usual contact Hamiltonian systems, associated to a Hamiltonian function we have an evolution section in addition to the Hamiltonian section. As an application of the results, we obtain the Hamilton-Jacobi equation for both the Hamiltonian section and the evolution section.

The organization of the paper is as follows. In section \ref{sec:contact-geometry}, we  recall some basic elements from  contact geometry. In section \ref{algebroids}, we remember some basic facts about Lie algebroids and the differential geometric aspects associated to them (see \cite{Mack-1995} for instance) . In this section, we also describe a particular example of a Lie algebroid, called the {\it prolongation of a Lie algebroid over a fibration}. This Lie algebroid will be necessary for further developments. In sections \ref{cftla} and \ref{Ha}, the contact formalism is extended to the setting of Lie algebroids; indeed, section \ref{cftla} describes the Lagrangian approach and section \ref{Ha} describes the Hamiltonian approach. These formalisms are developed in an analogous way to the standard contact Lagrangian and Hamiltonian formalisms. We finish this section defining the Legendre transformation on the context of Lie algebroids and we establish the equivalence between both formalisms--Lagrangian and Hamiltonian--when the Lagrangian function is hyperregular. Several examples are also studied in this section.

In section  \ref{legen-lie-subalg-2023}, we introduce the notion of a Legendrian Lie subalgebroid of a contact Lie algebroid. The Hamilton–Jacobi theory \cite{AM-1978} is an alternative formulation of classical mechanics, equivalent to other formulations such as Hamiltonian mechanics, and Lagrangian mechanics for regular systems. For contact systems, it was introduced in \cite{LLM-21, LS}, and it has been extended to many other settings \cite{CGMMMR-2017, CLPRM-2018, ELLMS-2021, V-2012}; this theory is also closely related to the integrability of Hamiltonian systems \cite{LLLR-23, V-2017}. So, we think that it is relevant to extend the theory to the Lie algebroid setting. Therefore, this is done in section \ref{hje}, and reinterpreted in terms of Legendrian subalgebroids.

In this paper, we do not deal explicitly with reduction issues (see for instance, \cite{CeMaRa, L-2001}) which will be discussed in a future paper.

All manifolds and maps are $C^\infty$ and sum over crossed repeated
indices is understood.

\section{Contact geometry}
\label{sec:contact-geometry}

In this section we review the geometric structures necessary to describe the contact formalism of dissipative mechanical systems. For more details, see  \cite{LL-19, Gaset2020} (see also \cite{BCT-2017, dLR}). For general studies of contact geometry in the Riemannian setting, see \cite{Bl1-1976, Bl1-2002, BW-1958}.

\begin{definition}\label{dfn:contact-manifold}
	Consider a smooth manifold $M$ of odd dimension $2n+1$. A differential form $\eta\in\Omega^1(M)$ such that $\eta\wedge(\d\eta)^{ n}$ is a volume form in $M$ is a \textbf{contact form}\index{contact!form}. In this case, $(M,\eta)$ is said to be a \textbf{contact manifold}\index{contact!manifold}.
\end{definition}

\begin{remark}
There is a more general definition of a contact structure on a connected oriented manifold $M$, which it can be seen as an equivalence class of $1$-forms satisfying $\eta \wedge (\d \eta)^n \neq 0$ everywhere on $M$, where two $1$-forms $\eta$, $\eta'$ are equivalent if there exists a nowhere vanishing function $f$ such that $\eta' = f \eta$ (see Section 2.1 in \cite{BOY}).
\end{remark}

Notice that the condition $\eta\wedge(\d\eta)^{ n}\neq 0$ implies that the contact form $\eta$ induces a decomposition of the tangent bundle $T M$ in the form $T M = \ker\eta\oplus\ker \d\eta \equiv D^{\rm C}\oplus D^{\rm \Rb}$.

\begin{proposition}
	Given a contact manifold $(M,\eta)$, there exists a unique vector field $\Rb \in\X(M)$, called \textbf{Reeb vector field}\index{Reeb!vector field}, such that
	\begin{equation}\label{eq:Reeb-condition}
			i_{\Rb}  \d\eta = 0\,, \qquad
			i_{\Rb}  \eta = 1\,.	
	\end{equation}
\end{proposition}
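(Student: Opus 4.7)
The plan is to exploit the direct-sum decomposition $T M = D^{\rm C}\oplus D^{\rm R}$, with $D^{\rm C} = \ker\eta$ and $D^{\rm R}=\ker\d\eta$, that was noted just before the proposition. Since $\dim M = 2n+1$ and $\eta\wedge(\d\eta)^n$ is nowhere zero, we get $(\d\eta)^n\neq 0$, so $\d\eta$ has rank exactly $2n$ at each point, which means $D^{\rm R}$ is a line subbundle of $T M$. The decomposition then forces $D^{\rm R}$ to be transverse to $D^{\rm C}$, and this is the geometric content that will make both uniqueness and existence easy to read off.

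For uniqueness, I would argue as follows. Suppose $\mathcal{R}_1,\mathcal{R}_2\in\mathfrak{X}(M)$ both satisfy \eqref{eq:Reeb-condition}. Then the difference $V:=\mathcal{R}_1-\mathcal{R}_2$ satisfies $i_V\eta=1-1=0$ and $i_V\d\eta=0$, hence $V\in D^{\rm C}\cap D^{\rm R}=\{0\}$, so $\mathcal{R}_1=\mathcal{R}_2$. This is the cleanest way to use the decomposition, and it also guarantees that local candidates will automatically glue.

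For existence, I would build $\mathcal{R}$ locally and then patch. Given any point $p\in M$, choose an open neighbourhood $U$ on which the line bundle $D^{\rm R}$ admits a nowhere vanishing section $X$. I would first verify that $\eta(X)$ is nowhere zero on $U$: if $\eta(X)(q)=0$ at some $q$, then $X_q\in D^{\rm C}_q\cap D^{\rm R}_q$, forcing $X_q=0$ and contradicting the choice of $X$. Then set $\mathcal{R}\big|_U := X/\eta(X)$; by construction $i_{\mathcal{R}}\eta=1$ and, since $X\in\ker\d\eta$, also $i_{\mathcal{R}}\d\eta=0$. By the uniqueness argument above, these local Reeb fields agree on overlaps and thus define a global $\mathcal{R}\in\mathfrak{X}(M)$.

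The main point to check carefully (and what I regard as the only substantive step, the rest being linear algebra) is the transversality $\ker\eta\cap\ker\d\eta=\{0\}$ used above, which is what actually makes the stated decomposition a true direct sum. If $v\neq 0$ lay in both, then from Leibniz
\begin{equation}
i_v\bigl(\eta\wedge(\d\eta)^n\bigr) \;=\; \eta(v)\,(\d\eta)^n \;-\; \eta\wedge i_v(\d\eta)^n \;=\; -\eta\wedge \bigl(n(\d\eta)^{n-1}\wedge i_v\d\eta\bigr) \;=\; 0,
\end{equation}
contradicting that $\eta\wedge(\d\eta)^n$ is a volume form. I would include this identity explicitly, since it is precisely where the contact hypothesis enters, and after it the rest of the argument is a short piece of bookkeeping on a line bundle.
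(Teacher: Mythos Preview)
Your argument is correct: the transversality $\ker\eta\cap\ker\d\eta=\{0\}$ follows from the contraction identity you wrote, $\ker\d\eta$ is then a line subbundle complementary to $\ker\eta$, and normalising a local frame of that line bundle by $\eta$ gives the Reeb field, with uniqueness forcing the local pieces to glue. One tiny stylistic point: you could streamline the existence half by observing that once the direct sum $T M=\ker\eta\oplus\ker\d\eta$ is established, $\eta$ restricts to an isomorphism from the line bundle $\ker\d\eta$ to the trivial line bundle, so the global section $\mathcal{R}$ is simply the preimage of the constant function $1$---no patching argument is then needed.

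As for the comparison: the paper does not prove this proposition at all. It is placed in the preliminary Section~\ref{sec:contact-geometry} as a standard fact about contact manifolds, stated immediately after the decomposition $T M=D^{\rm C}\oplus D^{\rm R}$ and followed directly by the Darboux theorem and the remark that $\mathcal{R}=\flat^{-1}(\eta)$. Your write-up therefore supplies a complete proof where the paper gives none; the approach you take (use the volume condition to force the decomposition, then normalise a section of the rank-one kernel) is exactly the classical one and is consistent with the decomposition the paper invokes.
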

The Reeb vector field $\Rb$ generates the distribution $D^{\rm \Rb}$, called the \textbf{Reeb distribution}\index{Reeb!distribution}.

%Nevertheless, one can go even further and show that for every contact manifold there exist Darboux-type coordinates:

\begin{theorem}[Darboux theorem for contact manifolds]
	Consider a contact manifold $(M,\eta)$ of dimension $2n+1$. Then, around every point $p\in M$ there exists a local chart $(U,q^i,p_i,s)$, $i = 1,\dotsc,n$, such that
	$$ \restr{\eta}{U} = \d s - p_i\d q^i\,. $$
	These coordinates are called \textbf{Darboux}\index{coordinates!Darboux, for a contact manifold}, \textbf{natural}\index{coordinates!natural, for a contact manifold} or \textbf{canonical coordinates}\index{coordinates!canonical, for a contact manifold} of the contact manifold $(M,\eta)$.
\end{theorem}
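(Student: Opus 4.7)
The plan is to prove this by a Moser-type deformation argument. Since the statement is purely local, I fix a point $p\in M$ and work on a small neighborhood $U$ of $p$; the goal is to construct a local diffeomorphism that pulls $\eta$ back to the target normal form.

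\emph{Normal form at the point and choice of a model.} First, using linear algebra on the pair $\bigl(\eta_p,(\d\eta)_p\bigr)$ on $T_pM$ — for which the contact condition $\eta_p\wedge(\d\eta)_p^{n}\neq 0$ forces a Darboux-type normal form at the single point — I would choose smooth local coordinates $(x^i,y_i,z)$ centered at $p$ such that $\eta_p = \d z|_p$ and $(\d\eta)_p = \d x^i|_p\wedge \d y_i|_p$. Define the model $1$-form
\begin{equation*}
\eta_0 := \d z - y_i\,\d x^i\,,
\end{equation*}
which is a contact form on $U$ and satisfies $\eta_0|_p = \eta|_p$ and $(\d\eta_0)|_p = (\d\eta)|_p$ (because $y_i(p)=0$).

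\emph{Homotopy and the Moser equation.} Next, set $\eta_t := \eta_0 + t(\eta - \eta_0)$ for $t\in[0,1]$. After shrinking $U$, each $\eta_t$ is a contact form on $U$, since the contact condition is open and the whole family agrees with $\eta$ at $p$ up to first order. I seek a time-dependent vector field $X_t$ on $U$ vanishing at $p$, whose flow $\phi_t$ satisfies $\phi_t^{\ast}\eta_t=\eta_0$. Differentiating this identity in $t$ and applying Cartan's formula reduces the problem to solving
\begin{equation*}
 i_{X_t}\,\d\eta_t \;+\; \d\bigl(i_{X_t}\eta_t\bigr) \;+\; (\eta - \eta_0) \;=\; 0\,.
\end{equation*}

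\emph{Solving for $X_t$ and conclusion.} Using the splitting $TU = \langle \Rb_t\rangle\oplus \ker\eta_t$ given by the Reeb vector field $\Rb_t$ of $\eta_t$ (Proposition above), I decompose $X_t = h_t\Rb_t + Y_t$ with $Y_t\in\ker\eta_t$. Contracting the Moser equation with $\Rb_t$ gives the transport equation $\Rb_t(h_t) = -\,i_{\Rb_t}(\eta-\eta_0)$, which I would solve by prescribing $h_t\equiv 0$ on a local hypersurface through $p$ transverse to $\Rb_t$; then, because $\d\eta_t$ restricts to a non-degenerate $2$-form on $\ker\eta_t$, the remaining equation $i_{Y_t}\,\d\eta_t = -\,\d h_t - (\eta-\eta_0)$ (whose right-hand side annihilates $\Rb_t$ by construction) determines $Y_t$ uniquely. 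Since $(\eta-\eta_0)_p=0$, the function $h_t$ vanishes to first order at $p$ and $Y_t(p)=0$; hence $X_t(p)=0$, and the flow $\phi_t$ is well defined on a possibly smaller neighborhood of $p$ for all $t\in[0,1]$. The diffeomorphism $\phi_1$ then satisfies $\phi_1^{\ast}\eta=\eta_0$, and transporting the coordinates by $\phi_1$ and relabeling $q^i := x^i\circ\phi_1$, $p_i := y_i\circ\phi_1$, $s := z\circ\phi_1$ yields the Darboux chart in the statement. The main obstacle is the last step: verifying that the Moser vector field $X_t$ vanishes at $p$ uniformly in $t$, which is exactly why the careful matching of the $1$-jets of $\eta$ and $\eta_0$ at $p$ secured in Step~1 is indispensable.
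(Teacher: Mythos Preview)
The paper does not prove the Darboux theorem; it is simply stated as a background result in Section~\ref{sec:contact-geometry} with no argument given. Your Moser-type proof is therefore not a comparison target but an independent contribution, and it is essentially correct and follows a standard strategy.

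Two small points. First, in the final relabeling you should use $\phi_1^{-1}$ rather than $\phi_1$: from $\phi_1^{\ast}\eta=\eta_0$ one gets $\eta=(\phi_1^{-1})^{\ast}\eta_0$, so the Darboux coordinates are $q^i=x^i\circ\phi_1^{-1}$, $p_i=y_i\circ\phi_1^{-1}$, $s=z\circ\phi_1^{-1}$; with your composition you would obtain $\phi_1^{\ast}\eta_0$, which is not $\eta$ in general. Second, the claim that $h_t$ vanishes to first order at $p$ deserves one more sentence: you have $h_t(p)=0$ by the boundary condition on the hypersurface $\Sigma$, and $v(h_t)(p)=0$ for $v\in T_p\Sigma$ since $h_t|_\Sigma\equiv 0$; the remaining direction $\Rb_t(h_t)(p)=0$ comes from the transport equation because $(\eta-\eta_0)_p=0$, and these together span $T_pM$ since $\Sigma$ is transverse to $\Rb_t$. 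You use this implicitly, but making it explicit closes the argument that $Y_t(p)=0$. It is also worth noting that $\Rb_t(p)$ is independent of $t$ (since $\eta_t$ and $\d\eta_t$ agree with $\eta$ at $p$), so a single hypersurface $\Sigma$ through $p$ works for the whole family.
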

Notice that Darboux coordinates are a particular case of adapted coordinates and hence, in Darboux coordinates, the Reeb vector field is
\begin{equation}\label{eq:coordinates-Reeb}
	\restr{ \Rb }{U} = \parder{}{s}\,.
\end{equation}

\begin{example}[Canonical contact structure]\label{ex:canonical-contact-structure}
{\rm 
Let $Q$ be a smooth manifold of dimension $n$. Then, the product manifold $T^* Q\times\R$ has a canonical contact structure given by the 1-form $\eta = \d s - \theta$, where $s$ is the canonical coordinate of $\R$ and $\theta$ is the pull-back of the Liouville 1-form $\theta_\circ\in\Omega^1(T^* Q)$ by the projection $T^* Q\times\R\to T^* Q$. Taking coordinates $(q^i)$ on $Q$ and natural coordinates $(q^i,p_i)$ on $T^\ast Q$, the local expression of the contact 1-form is
\begin{equation}\label{caneta}
\eta = \d s - p_i \d q^i\,.     
\end{equation} 
We also have that $\d\eta = \d q^i\wedge\d p_i$ and hence, the Reeb vector field is $\Rb = \tparder{}{s}$.
}
\end{example}

Given a contact manifold $(M,\eta)$, we have the $\Cinfty(M)$-module isomorphism

$$
	\begin{array}{rccl}
		\flat\colon & \X(M) & \longrightarrow & \Omega^1(M) \\
		& X & \longmapsto & i_X\d\eta + \left(i_X\eta\right)\eta
	\end{array}
$$
\begin{remark}\rm
	Notice that with this isomorphism in mind, we can define the Reeb vector field in an alternative way as $\Rb  = \flat^{-1}(\eta)$.\index{Reeb!vector field}
\end{remark}

\subsection{Contact Hamiltonian systems}
\label{sec:contact-Hamiltonian-systems}

This section reviews the concept of a contact Hamiltonian system and gives two different characterizations of the contact Hamiltonian vector field. %\textcolor{blue}{Just two correct? In your previous version there were indeed three}

\begin{definition}
	Given a contact manifold $(M,\eta)$, for every $H\in\Cinfty(M)$, we define its contact Hamiltonian vector field (or just Hamiltonian vector field) as the unique  vector
field $X_H$ satisfying
 \begin{equation}\label{eq:contact-hamilton-equations-fields0}
					\flat (X_H) = \d H - (\Rb(H)+H) \eta.
			\end{equation}
	
\end{definition}

In Darboux coordinates, this is written as follows

\begin{equation*}
	X_H = \parder{H}{p_i}\parder{}{q^i} - \left(\parder{H}{q^i} + p_i\parder{H}{s}\right)\parder{}{p_i} + \left(p_i\parder{H}{p_i} - H\right)\parder{}{s}\,.
\end{equation*}

An integral curve $\gamma(t) = (q^i(t),p_i(t),s(t))$ of this vector field  $X_H$ satisfies the contact Hamilton equations
\begin{equation}\label{eq:contact-hamiltonian-equations-darboux-coordinates}
			\dot q^i = \parder{H}{p_i}\,,\quad
		\dot p_i = -\left(\parder{H}{q^i} + p_i\parder{H}{s}\right)\,,\quad
		\dot s = p_i\parder{H}{p_i} - H\,.
\end{equation}

These equations are a generalization of the conservative Hamilton equations. We recover
this particular case when $\Rb(H)=0$. That is, when $H$ does not depend on $s$.

 The following proposition gives us two equivalent ways of writing equations \eqref{eq:contact-hamilton-equations-fields0}:

\begin{proposition}\label{prop:dissipation-Hamiltonian-mechanics}

Let $H:M\to \r$ be a Hamiltonian function. The following statements are
equivalent:

1. $X_H$ is the Hamiltonian vector field of $H$,

2. $X_H$ satisfies 
\begin{equation}\label{eq:contact-hamilton-equations-fields}
    i_{X_H}\d\eta =\d H-\Rb(H)\eta , \qquad i_{X_H}\eta=-H .
\end{equation}
\end{proposition}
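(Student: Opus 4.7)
The plan is to show the equivalence by exploiting the duality given by the isomorphism $\flat$ and the defining properties of the Reeb vector field $\mathcal{R}$, namely $i_{\mathcal{R}}\eta=1$ and $i_{\mathcal{R}}\d\eta=0$.

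First I would unfold the definition of $\flat$ to rewrite \eqref{eq:contact-hamilton-equations-fields0} in the explicit form
\begin{equation}
i_{X_H}\d\eta + (i_{X_H}\eta)\,\eta \;=\; \d H - (\mathcal{R}(H)+H)\,\eta.
\end{equation}
This single equation in $\Omega^1(M)$ is what I want to split into the two conditions \eqref{eq:contact-hamilton-equations-fields}. For the direction ``1 $\Rightarrow$ 2'', I would contract both sides of the above equation with the Reeb vector field $\mathcal{R}$. Using $i_{\mathcal{R}}\d\eta=0$, $i_{\mathcal{R}}\eta=1$, and $i_{\mathcal{R}}\d H=\mathcal{R}(H)$, the left-hand side reduces to $i_{X_H}\eta$ and the right-hand side to $\mathcal{R}(H)-(\mathcal{R}(H)+H)=-H$, yielding immediately the second equation $i_{X_H}\eta=-H$. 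Substituting this value back into the displayed equation cancels the $-H\,\eta$ terms and leaves $i_{X_H}\d\eta = \d H - \mathcal{R}(H)\,\eta$, which is the first equation of \eqref{eq:contact-hamilton-equations-fields}.

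For the converse ``2 $\Rightarrow$ 1'', I would simply add the two equations after multiplying the scalar equation $i_{X_H}\eta=-H$ by $\eta$:
\begin{equation}
i_{X_H}\d\eta + (i_{X_H}\eta)\,\eta \;=\; \bigl(\d H - \mathcal{R}(H)\eta\bigr) + (-H)\,\eta \;=\; \d H - (\mathcal{R}(H)+H)\,\eta,
\end{equation}
which, recognizing the left-hand side as $\flat(X_H)$, is precisely \eqref{eq:contact-hamilton-equations-fields0}.

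There is no real obstacle here; the argument is purely algebraic, relying only on the decomposition $TM = \ker\eta\oplus\ker\d\eta$ encoded by $\flat$ and on the characterizing properties of $\mathcal{R}$. The only point to be slightly careful about is to verify that $\flat$ is indeed an isomorphism (already recalled in the excerpt), so that the single equation $\flat(X_H)=\alpha$ uniquely determines $X_H$ and is therefore equivalent to its $\mathcal{R}$-component plus its $\ker\eta$-component taken separately.
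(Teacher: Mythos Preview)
Your proof is correct. The paper states this proposition without proof (it is a standard fact in contact Hamiltonian mechanics, essentially a restatement of the definition), so there is no argument in the paper to compare against; your algebraic splitting via contraction with $\mathcal{R}$ is the expected one.
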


\begin{definition}
 A contact Hamiltonian system is a triple $(M,\eta,H)$, where $(M,\eta)$ is a contact
manifold and $H:M\to \r$  is a smooth real function on $M$ that we will refer to  the Hamiltonian function.
\end{definition}
  The contact Hamiltonian vector fields model the dynamics of dissipative mechanical systems. As
opposed to the case of symplectic Hamiltonian systems, the evolution does not preserve
the energy since\begin{equation}\label{eq:dissipation-Hamiltonian-mechanics}
		X_H(H) = - \Rb(H) H\,,
	\end{equation}
	which expresses the \textsl{dissipation of the Hamiltonian function}.

\subsection{Contact Lagrangian systems}\label{CLS}

Let $Q$ be a manifold with dimension $n$ and coordinates $(q^i)$ and consider the product manifold $T Q \times\r$ with natural coordinates $(q^i,v^i,s)$.
The \textbf{vertical endomorphism}\index{vertical endomorphism}
$ \mathcal{J}\colon T(  T Q \times\r)\to T(  T Q \times\r)\,$ and the \textbf{Liouville vector field}\index{Liouville vector field} $\Delta\in\X(T Q \times\r)$ are the natural extensions of the vertical endomorphism and the Liouville vector field on $T Q$ to $T Q \times \r$ (see \cite{dLR} for more details). The local expressions of these objects in Darboux coordinates are
$$ \mathcal{J} = \parder{}{v^i}\otimes\d q^i\ ,\quad \Delta = v^i\parder{}{v^i}\,. $$

\begin{definition}
	Consider a path $c_s\colon I\subset\r\to Q\times\r$, where $c_s(t)  = (q^i(t),s(t))$. The \textbf{prolongation}\index{prolongation!of a path to $T Q \times\r$} of $c_s$ to $T Q \times\r$ is the path
	$$ c_s'= (\dot c,s)\colon I\subset\r\to T Q \times\r\,,\quad c_s'(t) = (q^i(t), \dot q^i(t), s(t))\,. $$
 The path $c_s'(t)$ is said to be \textbf{holonomic}.
\end{definition}

\begin{definition}
	A vector field $\Gamma\in\X(T Q \times\r)$ is said to satisfy the \textbf{second-order condition}\index{second order condition} or to be a \rm{\textsc{sode}} if all its integral curves are holonomic.
\end{definition}
The following proposition gives an alternative characterization of \textsc{sode}s using the canonical structures defined above:
\begin{proposition}
	A vector field $\Gamma\in\X(T Q \times\r)$ is a \rm{\textsc{sode}} if, and only if, $\mathcal{J}\circ\Gamma = \Delta$.
\end{proposition}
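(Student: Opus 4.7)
The plan is to work entirely in the Darboux coordinates $(q^i, v^i, s)$, where both $\mathcal{J}$ and $\Delta$ have the simple local expressions given just above the statement. First, I would write an arbitrary vector field as
\[
\Gamma = A^i\,\parder{}{q^i} + B^i\,\parder{}{v^i} + C\,\parder{}{s},
\]
with $A^i, B^i, C \in \Cinfty(T Q \times \r)$. Interpreting $\mathcal{J}\circ\Gamma$ as the action of the $(1,1)$-tensor $\mathcal{J} = \tparder{}{v^i}\otimes \d q^i$ on $\Gamma$, a one-line computation gives $\mathcal{J}\circ\Gamma = A^i\,\tparder{}{v^i}$. Comparing with $\Delta = v^i\,\tparder{}{v^i}$ shows that the algebraic condition $\mathcal{J}\circ\Gamma = \Delta$ is pointwise equivalent to $A^i = v^i$ for every $i$.

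Next, I would translate the \textsc{sode} condition into the same coordinate form. An integral curve $\gamma(t) = (q^i(t), v^i(t), s(t))$ of $\Gamma$ satisfies
\[
\dot{q}^i(t) = A^i(\gamma(t)), \qquad \dot{v}^i(t) = B^i(\gamma(t)), \qquad \dot{s}(t) = C(\gamma(t)),
\]
and, by the definition of prolongation, $\gamma$ is holonomic precisely when $v^i(t) = \dot{q}^i(t)$, that is, $v^i(t) = A^i(\gamma(t))$ along the curve.

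For the forward implication, assuming every integral curve of $\Gamma$ is holonomic, I would pick an arbitrary point $p = (q, v, s)$ and invoke the existence of an integral curve through $p$; evaluating the identity $v^i(t) = A^i(\gamma(t))$ at the initial instant yields $A^i(p) = v^i$ pointwise, so $\mathcal{J}\circ\Gamma = \Delta$. For the converse, if $A^i = v^i$ everywhere, then every integral curve automatically satisfies $\dot{q}^i(t) = v^i(t)$, hence is the prolongation of its projection $c_s(t) = (q^i(t), s(t))$ and is therefore holonomic.

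There is no genuine obstacle in this proof; it is essentially an unwinding of definitions once the local formulas for $\mathcal{J}$ and $\Delta$ are written down. The only point that deserves care is conceptual rather than technical: one must read $\mathcal{J}\circ\Gamma$ as the application of the $(1,1)$-tensor to the vector field (producing again a vector field to be compared with $\Delta$), and use the standard fact that at every point of $T Q \times \r$ there exists a local integral curve of $\Gamma$ so that the pointwise identification $A^i(p) = v^i$ can be extracted from the curve-wise identity.
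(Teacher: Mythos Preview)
Your proof is correct and is exactly the standard argument for this characterization. The paper itself does not give a proof of this proposition at all: it is stated as a well-known fact and immediately followed by the local expression of a \textsc{sode}, so there is nothing to compare against beyond noting that your coordinate computation is the expected one.
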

The local expression of a \textsc{sode} is
$ \Gamma = v^i\parder{}{q^i} + f^i\parder{}{v^i} + g\parder{}{s}\,. $
Hence, in coordinates, a \textsc{sode} defines a system of differential equations of the form
$$
		\frac{\d^2q^i}{\d t^2} = f^i(q^i, \dot q^i, s)\,,\quad
		\frac{\d s}{\d t} = g(q^i, \dot q^i, s)\,.	
$$

\begin{definition}
  Let $L\colon T Q  \times\r\to\r$ be a Lagrangian function.
\begin{itemize}

 \item     
$L$  is said to be regular  if its Hessian matrix with respect to the velocities
$$(W_{ij} = \parderr{L}{v^j}{v^i})$$  is non singular.

 \item The associated \textbf{Lagrangian energy}  is  $E_L = \Delta(L) - L\in\Cinfty(T Q \times\r)$.
 
\item The \textbf{Cartan forms}\index{Cartan forms} associated to $L$ are
	\begin{equation*}
		\theta_L =  \d L\circ J \in\Omega^1(T Q \times \r)\ ,\quad \omega_L = -\d\theta_L\in\Omega^2(T Q \times\r)\,.
	\end{equation*}
 
	\item The   $1$-form  
	$$ \eta_L = \d s - \theta_L\in\Omega^1(T Q \times\r)\,, $$
	      is a {\bf contact form} on $T Q \times\r$ if, and only if, $L$ is regular. 
 In this case, the triple $(T Q \times\r,\eta_L,L)$ is called a \textbf{contact Lagrangian system}\index{contact!Lagrangian system}.
 \end{itemize}
\end{definition}

In natural coordinates $(q^i, v^i, s)$ on $T Q \times\r$, the contact Lagrangian form $\eta_L$ is
$$ \eta_L = \d s - \parder{L}{v^i}\d q^i\,, $$
and hence  $\d\eta _L = \omega_L$,   is given by
$$ \d\eta _L = -\parderr{L}{s}{v^i}\d s\wedge\d q^i - \parderr{L}{q^j}{v^i}\d q^j\wedge\d q^i - \parderr{L}{v^j}{v^i}\d v^j\wedge\d q^i\,. $$

Every contact Lagrangian system $(T Q \times\r, \eta_L, L)$ has associated a contact Hamiltonian system $(T Q \times\r, \eta_L, E_L)$. From \eqref{eq:Reeb-condition}, we have that the Reeb vector field $\Rb_L\in\X(T Q \times\r)$ for this contact Hamiltonian system is given by the conditions
\begin{equation*}
		i_{\Rb_L}\d\eta _L = 0\,,\quad
		i_{\Rb_L}\eta_L = 1\,.
\end{equation*}
Its local expression in natural coordinates $(q^i,v^i,s)$ is
$$\Rb_L = \parder{}{s} - W^{ji}\parderr{L}{s}{v^j}\parder{}{v^i}\,, $$
where $(W^{ij})$ is the inverse of the Hessian matrix of the Lagrangian $(W_{ij})$, that is, $W^{ij}W_{jk} = \delta^i_k$.

%Notice that the Reeb vector field is not as simple as in the Hamiltonian case \eqref{eq:coordinates-Reeb}. This is because the natural coordinates in $T Q \times\r$ are not adapted coordinates for the contact Lagrangian form $\eta_L$.

%\begin{remark}\rm
%	The Lagrangian energy satisfies the relation
%	$$ \Lie_{R_L}E_L = -\parder{L}{s}\,. $$
%\end{remark}

\begin{definition}
	Consider a contact Lagrangian system $(T Q \times\r, \eta_L, L)$.

	The \textbf{contact Lagrangian equations} for a vector field $X\in\X(T Q \times\r)$ are
	\begin{equation}\label{eq:contact-Lagrangian-equations}
			i_X \d\eta _L = \d E_L - \Rb_L(E_L)\eta_L\,,\quad
			i_X \eta_L = -E_L\,.
				\end{equation}
	
	The vector field $X_L\in\X(T Q \times\r)$ solution to these equations is a {\sc sode}, and it is called the \textbf{contact Lagrangian vector field} (it is a contact Hamiltonian vector field for the function $E_L$).
\end{definition}
Let us observe that if
$\gamma(t) = (q^i(t), v^i(t), s(t))$ is an integral curve of $X_L$, from \eqref{eq:contact-Lagrangian-equations} we obtain
\begin{align}
    v^{i}(t) &= \dot{q}^{i}(t)\,, \\ 
	\parderr{L}{v^j}{v^i}\dot v^j + \parderr{L}{q^j}{v^i} v^j + \parderr{L}{s}{v^i}\dot s - \parder{L}{q^i} = \frac{d}{d t}\left(\parder{L}{v^i}\right) - \parder{L}{q^i} &= \parder{L}{s}\parder{L}{v^i}\,,\label{eq:contact-Euler-Lagrange-1}\\
	\dot s &= L\,,\label{eq:contact-Euler-Lagrange-2}
\end{align}
which coincide with the generalized Euler--Lagrange equations stated in \cite{Her1930}. Observe that $\gamma$ is holonomic, that is, $\gamma (t) = c_{s}'(t) = (q^{i}(t), \dot{q}^{i}(t), s(t))$.

\section{Lie algebroids}\label{algebroids}

In this section, we present some basic facts about Lie algebroids,
including features of the associated differential calculus and results on Lie algebroid morphisms that will be
necessary for the rest of the paper. For further information on groupoids and Lie algebroids, and their roles in differential geometry, see \cite{HM-1990,Mack-1987,Mack-1995}.

\subsection{Generalities on Lie algebroids}

Let $E$ be a vector bundle of rank $m$ over a manifold $Q$ of
dimension $n$, and let $\tau: E\to Q$ be the vector bundle projection.
Denote by $Sec(E)$ the $C^\infty(Q)$-module of sections of
$\tau$. 

A {\it Lie algebroid structure}
$(\lcf\cdot,\cdot\rcf_E,\rho )$ on $E$ is a Lie bracket
$\lcf\cdot,\cdot\rcf_E$ on
the space $Sec(E)$ together with  an {\it anchor map} $\rho : E\to
T Q$ and its, identically denoted, induced $C^\infty(Q)$-module homomorphism
$\rho :Sec(E)\to \vf{(Q)}$, such that the {\it compatibility condition}
\[
\lcf\sigma_1,f\sigma_2\rcf_E=
f\lcf\sigma_1,\sigma_2\rcf_E+(\rho (\sigma_1)f)\sigma_2\,,\]
holds for any smooth functions $f$ on $Q$ and sections $\,\sigma_1,\sigma_2$ of $E$ (here $\rho (\sigma_1)$ is the vector
field on $Q$ given by
$\rho (\sigma_1)( {q})=\rho (\sigma_1( {q}))$). 

The triple
$(E,\lcf\cdot,\cdot\rcf_E,\rho )$ is called a {\it Lie algebroid over $Q$}.
{}From the compatibility condition and the Jacobi identity, it follows
that  $\rho :Sec(E)\to\vf{(Q)}$ is a homomorphism
between the Lie algebras $(Sec(E),\lcf\cdot,\cdot\rcf_E)$ and
$(\vf{(Q)},[\cdot,\cdot])$.

%The following are examples of Lie algebroids.
%\begin{enumerate}
%\item {\bf Real Lie algebras of finite dimension}. Any real Lie algebra of finite dimension is a Lie algebroid over a single point.

%\item {\bf The tangent bundle.} If $T Q$ is the tangent bundle of a
%manifold $Q$, then, the triple $(T Q,[\cdot,\cdot],id_{T Q})$ is a Lie
%algebroid over $Q$, where $id_{T Q}:T Q\to T Q$ is the identity map.

%\item Another important  example of a Lie algebroid may be constructed as follows.
%Let $\pi: P\to Q$ be a principal bundle with structural group $G$.
%Denote by $\Phi:G\times P\to P$ the free action of $G$ on $P$ and
%by $\T\Phi:G\times T P\to T P$ the tangent action of $G$ on $T P$.
%Then the sections of the quotient vector bundle
%$\tau_{P|G}: T P/G\to Q=P/G$
%may be identified with the vector fields on $P$ which are
%invariant under the action $\Phi$. Since every $G$-invariant
%vector field on $P$ is $\pi$-projectable and the
%standard Lie bracket on vector fields is closed with respect to
%$G$-invariant vector fields, we can define a Lie algebroid
%structure on $T P/G$. This Lie algebroid over $Q$ is called
%{\bf the Atiyah (gauge) algebroid associated with the principal
%bundle} $\pi:P\to Q$   \cite{LMM-2005,Mack-1987}.

%\end{enumerate}

Throughout this paper, the role played by a Lie algebroid is the same as the  tangent bundle of
$Q$. In this way, one regards an element $e$ of $E$ as a generalized
velocity, and the actual velocity ${  v}$ is obtained when we apply the
anchor map to ${  e}$, i.e. ${  v}=\rho ({  e})$.

Let $(q^i)$ be local coordinates on a neighborhood $U$ of $Q$, $i= 1,\ldots,n$, and
$\{e_\alpha\}$ be  a local basis of sections of $\tau$, $\alpha= 1,\ldots, m$.
Given an element ${a_q}\in E$ such that $\tau({a_q})= {q}$, we can write
$a_q=y^\alpha(a_q)e_\alpha( {q})\in E_ {q} ,$ i.e. each
section $\sigma$ is  given locally by $\sigma\big\vert_{U}=y^\alpha
e_\alpha$ and the
coordinates of $a_q$ are $(q^i({q}),y^\alpha({a_q})) \, .$

For {\it the anchor map} $\rho : E\to
T Q$ and its, identically denoted, induced $C^\infty(Q)$-module homomorphism 
$\rho :Sec(E)\to \vf{(Q)}\,,\,\, \left[\rho (\sigma)\right](q)=\rho (\sigma(q))$ , we have 
\begin{equation}\label{roialfa}
 [\rho (e_\alpha)](q)=\rho (e_\alpha(q))=\rho^i_\alpha(q)
\displaystyle\frac{\partial}{\partial q^i} \Big\vert_q  .
\end{equation}

 A Lie algebroid structure on $Q$ is locally determined  as a set of
local {\it structure
functions}  $\rho^i_\alpha,\; \mathcal{C}^\gamma_{\alpha\beta}:Q\to \r$
on $Q$ that are defined by
\begin{equation}\label{structure} \rho (e_\alpha)=\rho^i_\alpha
\displaystyle\frac{\partial}{\partial q^i} ,\quad
\lcf e_\alpha,e_\beta\rcf_E=\mathcal{C}^\gamma_{\alpha\, \beta}e_\gamma\, ,
\end{equation} and
satisfy the  relations
\begin{equation}\label{ecest}\displaystyle\sum_{cyclic(\alpha,\beta,\gamma)}\left(\rho^i_\alpha\displaystyle\frac{\partial
\mathcal{C}^\nu_{\beta\gamma}}{\partial
q^i}+\mathcal{C}^\nu_{\alpha\mu}\mathcal{C}^\mu_{\beta\gamma}\right)=0\;
\, , \quad  \rho^j_\alpha\displaystyle\frac{\partial \rho^i_\beta}{\partial
q^j}- \rho^j_\beta\displaystyle\frac{\partial \rho^i_\alpha}{\partial
q^j}=\rho^i_\gamma \mathcal{C}^\gamma_{\alpha\beta}\;.
\end{equation}
These relations, which are a consequence of the compatibility
condition and Jacobi's identity,  are usually called {\it the structure equations} of the Lie algebroid $E$.

\begin{definition}
A curve
$\widetilde c\colon  I\subseteq\r\to Q$ is called an integral  curve of a
section $\xi$ of $\tau:E\rightarrow Q$ if   $\widetilde c(t)$ is an integral curve of the vector field
$\rho(\xi)$, that is,
\begin{equation}
\rho(\xi)(\widetilde{c}( t))=
\widetilde{c}_*( t)\left(\displaystyle\frac{\d}{\d t}\Big\vert_{
 t}\right). \label{integral:curve}
\end{equation}
\end{definition}
If $\widetilde c$ is written locally as 
$\widetilde c( t)=(q^i( t))$ and $\xi$ as $\xi=y^{\alpha}e_{\alpha}$,
 then we deduce that \eqref{integral:curve} is written in local coordinates as
\begin{equation}\label{algebroid:integral:curve}
  \displaystyle\frac{\d
  q^i}{\d t}\Big\vert_{ t}= \rho^i_\alpha(\widetilde{c}(t))y^\alpha(\widetilde{c}(t))\;.
\end{equation}

%\subsection{Exterior differential}
A Lie algebroid structure on
$E$ allows us to define {\it the exterior differential of $E$},
$\d^E:Sec(\bigwedge^k E^*)\to Sec(\bigwedge^{k+1} E^*)$,
as follows:
{\small\begin{equation}\label{difE}\begin{array}{lcl}
\d^E\mu \ (\sigma_1,\ldots, \sigma_{k+1})&=&
\displaystyle\sum_{i=1}^{k+1}(-1)^{i+1}\rho (\sigma_i)\mu(\sigma_1,\ldots,
\widehat{\sigma_i},\ldots, \sigma_{k+1})\\\noalign{\medskip} &+&
\displaystyle\sum_{i<j}(-1)^{i+j}\mu(\lcf\sigma_i,\sigma_j \rcf_E,\sigma_1,\ldots,
\widehat{\sigma_i},\ldots, \widehat{\sigma_j},\ldots
\sigma_{k+1})\;,
\end{array}\end{equation}}\noindent for $\mu\in Sec(\bigwedge^k E^*)$ and
$\sigma_1,\ldots,\sigma_{k+1}\in Sec(E)$. It follows that $\d^E$
is a cohomology operator, that is, $(\d^E)^2=0$.

In particular, if $f:Q\to\r$ is a  smooth real function then
$\d^E f\in E^*$ is given by 
$\d^E f(\sigma)=\rho (\sigma)f,$ for $\sigma\in Sec(E)$, so we have that
\begin{equation}\label{defunc}
\d^E f = \rho^i_\alpha \, \ds\frac{\partial f}{\partial q^i} \, e^\alpha ,
\end{equation}
where $\{e^\alpha\}$ is the dual basis of
$\{e_\alpha\}$.
Locally, the
exterior differential is determined by
\begin{equation}\label{difEloc}\d^Eq^i=\rho^i_\alpha e^\alpha\quad \makebox{and}\quad
\d^E e^\gamma=-\displaystyle\frac{1}{2}\mathcal{C}^\gamma_{\alpha\beta}e^\alpha\wedge
e^\beta\,.\end{equation}

Indeed, from  \eqref{structure} we deduce
$\d^Eq^i(e_\alpha)=\rho (e_\alpha)(q^i)=\rho^j_\alpha
\frac{\partial}{\partial q^j} (q^i)=\rho^i_\alpha$. Then $\d^Eq^i=\rho^i_\alpha\, e^\alpha .$ Similarly, $\d^Ee^\gamma(e_\alpha,e_\beta)=\rho (e_\alpha)(e^{\gamma}(e_\beta)) - \rho (e_\beta)(e^{\gamma}(e_\alpha))-e^{\gamma}(\lcf e_\alpha,e_\beta \rcf_{E})=-\mathcal{C}^\gamma_{\alpha\beta}$.

The usual Cartan calculus extends to the case of Lie algebroids: for
every section $\sigma$ of $E$ we  have a derivation $\imath_\sigma$
(contraction) of degree $-1$ and a derivation
$\mathcal{L}_\sigma=\imath_\sigma\circ \d + \d\circ \imath_\sigma$
(the Lie derivative) of degree $0$; for more details, see
\cite{Mack-1987,Mack-1995}.

%\subsection{Morphisms}
 Let $(E,\lcf\cdot,\cdot\rcf_{E},\rho )$ and
$(E',\lcf\cdot,\cdot\rcf_{E'}, \rho')$ be two Lie algebroids over $Q$ and
$Q'$ respectively, then a morphism of vector bundles $(F,f)$ of $E$ on $E'$
\[\xymatrix@C=15mm @R=6mm{
E \ar[r]^F \ar[d]_\tau & E' \ar[d]^{\tau'} \\
Q \ar[r]^f & Q' 
}
\]
is said to be a {\it Lie
algebroid morphism} if 
\begin{equation}\label{lie morph}
\d^E ((F,f)^*\sigma')=(F,f)^*(\d^{E'}\sigma ')\,,\quad\makebox{ for all }
\sigma '\in Sec(\textstyle\bigwedge^k (E')^*)\makebox{ and for all
$k$.}
\end{equation}
Here $(F,f)^*\sigma '$ is the section of the vector
bundle $\bigwedge^k E^*\to Q$ defined by
\begin{equation}\label{pullsec}
((F,f)^*\sigma ')_ {q}({a}_1,\ldots,{a}_k) = \sigma_{f( {q})}' (F({ a}_1),\ldots,
F({ a}_k))\,,
\end{equation}
for $ {q}\in Q$ and ${ a}_1,\ldots,
{ a}_k\in E_ {q}$. 
In  particular, if $Q=Q'$ and
$f=id_Q : Q \to Q$ then the pair $(F,f)$ is a Lie algebroid morphism if, and only if,
\[\lcf F\circ\sigma_1,F\circ\sigma_2\rcf_{E'} =
F\lcf\sigma_1,\sigma_2\rcf_{E},\quad
\rho'(F\circ\sigma)=\rho (\sigma),\]
for $\sigma,\sigma_1,\sigma_2\in Sec(E)$.

%\begin{definition}
%Consider a function $f : Q \to \r$, we denote by $j^1 f : Q \to E^* \times \r$, $j^1 f = (\d^E f, f)$ the $1$-jet of $f$, given in local coordinates by
%\begin{equation}
%j^1 f(q^i) = \left( q^i, \rho^i_\alpha \ds\frac{\partial f}{\partial q^i}, f(q^i)\right) .
%\end{equation}
%\end{definition}

Finally, we review the notion of a Lie subalgebroid.
\begin{definition}\label{subalg}
Let $(E,\lcf\cdot,\cdot\rcf_{E},\rho)$ and $(F, \lcf\cdot,\cdot\rcf_{F},\rho')$ be two Lie algebroids over the manifolds $Q$ and $N$, respectively. A {\it Lie subalgebroid} is a morphism of Lie algebroids $j: F \to E$ , $i: N \to Q$ such that the pair $(j,i)$ is a monomorphism of vector bundles and $i$ is an injective inmersion (see \cite{HM-1990}).
\end{definition}

\subsection{Examples of Lie algebroids and Lie subalgebroids}

\begin{example} {\bf (Tangent 
bundle)}
{\rm 
The standard example of a Lie algebroid is the
tangent bundle of a manifold $Q$. In this case, the space of sections is just the set of
vector fields on $Q$ and the Lie  bracket of sections      is induced by
the standard Lie bracket of vector fields on $Q$. The anchor map is the identity.

Let $N$ be a submanifold of $Q$, then $TN$ is a Lie subalgebroid of $TQ.$ Now, let $\mathcal{D}$ be a completely integrable distribution on a
manifold $Q$. $\mathcal{D}$ equipped with the bracket of vector
fields is a Lie algebroid over $Q$ since
$\tau_{TQ}\mid_{\mathcal{D}}:\mathcal{D}\to Q$ is a vector bundle. The anchor
map is the inclusion $i_{\mathcal{D}}:\mathcal{D}\to TQ$
($i_{\mathcal{D}}$ is a Lie algebroid monomorphism). Hence, $\mathcal{D}$ is a Lie subalgebroid of the Lie algebroid $\tau_{TQ}:TQ\to Q$. Likewise, if $N$ is an integrable manifold of $\mathcal{D}$, then $\mathcal{D}|_{N}$ is a Lie subalgebroid of $\mathcal{D}$.

%If $N$ is a submanifold of $Q$ and $\mathcal{F}_{N}$ is a foliation on $N,$ then $\mathcal{F}_{N}$ is a Lie subalgebroid of the Lie algebroid $\tau_{TQ}:TQ\to Q$.
}
\end{example}

\begin{example} {\bf (Lie algebra)}
{\rm  Let $\mathfrak{g}$ be a \textit{finite dimensional real Lie algebra} and 
$Q=\{q\}$ be a unique point. The vector bundle
$\tau_{\mathfrak{g}}:\mathfrak{g}\to Q$ is a Lie algebroid. The sections of this
bundle can be identified with the elements of $\mathfrak{g}$, and therefore we can consider as the Lie bracket the structure of the
Lie algebra induced by $\al$, and denoted by $[\cdot,\cdot]_{\mathfrak{g}}$. Since
$TQ=\{0\}$ one may consider the anchor map $\rho\equiv 0$.
%The triple
%$(\mathfrak{g},[\cdot,\cdot]_{\mathfrak{g}},0)$ is a Lie algebroid over a point.

Moreover, if $\mathfrak{h}$ is a Lie
subalgebra of $\mathfrak{g}$ and we consider the Lie algebroid induced by
$\mathfrak{g}$ and $\mathfrak{h}$ over a point, then $\mathfrak{h}$
is a Lie subalgebroid of $\mathfrak{g}$.}
\end{example}

\begin{example}
{\bf(Action Lie Algebroid)}
{\rm 
Let $\phi:Q\times G\to Q$ be an action of $G$ on the manifold $Q$, where
$G$  is a Lie group.  {\it The vector bundle $\tau_{Q\times\mathfrak{g}}:Q\times\mathfrak{g}\to
Q$ is a Lie algebroid over $Q$.} The induced anti-homomorphism between the
Lie algebras $\mathfrak{g}$ and $\mathfrak{X}(Q)$ by the action is determined by 
$\Phi:\mathfrak{g}\to\mathfrak{X}(Q)$,
$\xi\mapsto\xi_Q$, where $\xi_{Q}$ is the infinitesimal generator of the
action for $\xi\in\mathfrak{g}$.

The anchor map $\rho:Q\times\mathfrak{g}\to TQ$ is defined by $\rho(q,\xi)=-\xi_{Q}(q)$, and
the Lie bracket of sections is given by the Lie algebra structure on
$Sec(\tau_{Q\times\mathfrak{g}})$ as
$${\br{\hat{\xi}}{\hat{\eta}}}_{Q\times\al}(q)=(q,[\xi,\eta])=\widehat{[\xi,\eta]}(q) ,$$
for $q\in Q$, where $\hat{\xi}(q)=(q,\xi)$, $\hat{\eta}(q)=(q,\eta)$ for
$\xi,\eta\in\al$. The triple $(Q\times\al, {\br{\cdot}{\cdot}}_{Q\times\al},\rho)$ is called \textit{Action Lie algebroid}.

Let $N$ be a submanifold of $Q$ and $\mathfrak{h}$ be a Lie
subalgebra of $\mathfrak{g}$ such that the infinitesimal generators
of the elements of $\mathfrak{h}$ are tangent to $N;$ that is, the
application \begin{eqnarray*}&&\mathfrak{h}\to\mathfrak{X}(N)\\
&&\xi\mapsto \xi_{N} \end{eqnarray*} is well defined. Thus, the
action Lie algebroid $N\times\mathfrak{h}\to N$ is a Lie
subalgebroid of $Q\times\mathfrak{g}\to Q$.
}
\end{example}
\begin{example}\label{Atiyah case}{\bf(Atiyah (gauge) algebroid)}

{\rm 
Let $G$ be a Lie group and assume that $G$ acts freely and properly on $Q$ and we  
denote by $\pi:Q\to Q/G$ the associated principal
bundle. 

The tangent lift of the action gives a free and proper
action of $G$ on $TQ$. Thus, we can consider the fibration $\tau:TQ/G\to Q/G$ given by
$\tau([v_q])=\pi(q)$. It can be
proved that $\tau$  is a vector bundle whose fiber over a point $\pi(q)\in Q/G$ is isomorphic $T_qQ$.

The sections of $\tau: TQ/G \to Q/G$  may be identified  with  the  vector fields on $Q$ which are  
 invariant  by the action $\phi:G\times Q\to Q$, that is,
$$Sec(TQ/G)=\{X\in\mathfrak{X}(Q)\mid
X \hbox{ is $G$-invariant}\}.$$  
Since all $G$-invariant vector fields are $\pi$-projectable
 and the standard Lie bracket on vector fields
is closed with respect to $G$-invariant vector fields, we can define a Lie algebroid structure on $\widehat{TQ}:= TQ/G \to \widehat{Q} := Q/G$, where the anchor map $\rho:\widehat{TQ}\to T(\widehat{Q})$ is given by
$\rho([v_q])=T_{q}\pi(v_q)$

%The basis $\{e_{i},\widehat{e_{A}}\}$ induce local coordinates $(q^{i},v^{i},w^{A})$ on $TQ/G=TQ/G$, (see \cite{MC-2008}).

Additionally, let $N$ be a $G$-invariant submanifold of $Q$ and
$\mathcal{D}_{N}$ be a $G-$invariant integrable distribution over $N.$ We may
consider the vector bundle
$\widehat{\mathcal{D}_{N}}=\mathcal{D}_{N}/G\to N/G=\widehat{N}$
and endow it with a Lie algebroid structure. The sections of
$\widehat{\mathcal{D}_{N}}$ are $$Sec(\widehat{\mathcal{D}}_{N})=\{X\in\mathfrak{X}(N)\mid X\hbox{ is $G$-invariant and }X(q)\in\mathcal{D}_{N}(q), \forall q\in N\}.$$
The standard bracket of vector fields on $N$ induces a Lie algebra
structure on $Sec(\widehat{\mathcal{D}}_{N}).$ The anchor map is
the canonical inclusion of $\widehat{\mathcal{D}}_{N}$ on
$T\widehat{N}$ and $\widehat{\mathcal{D}}_{N}$ is a Lie subalgebroid
of $TQ/G\to Q/G.$
}
\end{example}

\subsection{The prolongation of a Lie algebroid over a fibration.}\protect\label{Sec 5.2.}

In this subsection we recall a particular
kind of Lie algebroid that will be used later (see \cite{HM-1990}, for more details).

If $(E,\lcf\cdot,\cdot\rcf_E,\rho)$ is a Lie algebroid of rank $m$ over a smooth manifold $Q$ of dimension $n$, and   $\pi:P\to Q$ is a fibration, then 
$$
\widetilde\tau_P\colon\mathcal{T}^EP= \bigcup_{p\in P}
\mathcal{T}^E_pP \to P,
$$
where
\begin{equation}\label{prol}
\mathcal{T}^E_pP=\{(a_{\pi(p)},v_p)\in E\times T P\, / \,
\rho(a_{\pi(p)})=T\pi(v_p)\}
\end{equation}
  is a Lie algebroid called {\it the prolongation of the Lie algebroid $E$ over $\pi:P\to Q$}, 
 where $T\pi:T P\to T Q$
denotes the tangent map to $\pi$.  The anchor map of this Lie algebroid is
\[\begin{array}{rrcl} \rho^{\pi}\colon
&\mathcal{T}^EP\equiv E\times_{T Q}T P&\to& T P\\
 & (a_{\pi(p)},v_p)&\mapsto &\rho^{\pi}(a_{\pi(p)},v_p)=v_p\,.
\end{array}\] The Lie bracket structure on the space of sections of $\mathcal{T}^EP$ will be given shortly.

In this paper we consider
two particular prolongations, one  over $P = E\times \r\to Q$ and the other over
 $P = E^*\times \r \to Q$.

The following diagram collect the different projections defined from $\mathcal{T}^EP$, that will be used throughout the chapter 
$$\xymatrix@=8mm{ \mathcal{T}^EP\equiv
E\times_{T Q}T P\ar[d]_-{   \tau_1}\ar[r]^-{\rho^\pi}\ar@/^{8mm}/[rr]^-{   \widetilde\tau_P }
& T P\ar[r]^-{\tau_P }\ar[d]^-{T\pi} & P\ar[d]^-{\pi}
\\
E\ar[r]^-{\rho} & T Q\ar[r]^-{\tau_Q} & Q }$$ 
where
\begin{equation}\label{projection prol}
\tau_1(a_{\pi(p)},v_p)=a_{\pi(p)} \,, \qquad
\rho^{\pi}(a_{\pi(p)},v_p)= v_p \,, \qquad 
\widetilde\tau_P (a_{\pi(p)},v_p)= p,
\end{equation}
being $a_{\pi(p)}\in E,\; v_p\in T_pP$ and ${
p}\in P$.

Now we will describe some objects related to
$\mathcal{T}^EP$.
If $(q^i,u^\ell\,)$ are local coordinates on $P$ and $(q^{i},y^{\alpha})$ are local coordinates on $E$ adapted to the local basis of section $\{e_\alpha\}$ of
$\tau:E\to Q$, then 
the induced local coordinate system $(q^i,z^\alpha,u^\ell,\dot{u}^\ell)$ on $\mathcal{T}^EP$, $i=1,\ldots,n$, $\alpha=1,\ldots,m$, $\ell=1,\ldots,n'$, is
\begin{equation}\label{local coord k-prol}\begin{array}{lcllcl}
 q^i(a_{\pi(p)},v_p) &=& q^i(\pi(p))
  \;,\quad &
u^\ell(a_{\pi(p)},v_p)&=&u^\ell(p)\;,
\\\noalign{\medskip}
z^\alpha(a_{\pi(p)},v_p)&=&y^\alpha(a_{\pi(p)})
\;,\quad & \dot{u}^\ell(a_{\pi(p)},v_p)
&=&v{_p}(u^\ell)\;. \\
\end{array}
\end{equation}
where
$$a_{\pi(p)}=y^\alpha
(a_{\pi(p)})e_\alpha(\pi(p)) \,, \qquad v_p=v^i \ds\frac{\partial}{\partial
q^i}\Big\vert_p + \dot{u}^\ell\ds\frac{\partial}{\partial
u^\ell}\Big\vert_p\,,$$
and since   $\rho(a_{\pi(p)})=T_p\pi(v_p)\,$, from \eqref{structure}
 we have
\begin{equation}\label{vicoord}
v^i=y^\alpha(a_{\pi(p)})\rho^i_\alpha(\pi(p))
\;,  
\end{equation}
where $\rho^i_\alpha$ is the local expression of the anchor map $\rho:E\rightarrow TQ$.

 A local basis  of sections of $\widetilde\tau_P\colon\mathcal{T}^EP\to P $ is given by the family 
$\mathcal{X}_\alpha , \mathcal{V}_\ell \colon  P \to \mathcal{T}^EP$,
where
\begin{equation}\label{base k-prol}
\mathcal{X}_\alpha(p) =
\left(e_\alpha(\pi(p)), \rho^i_\alpha(\pi(p))\ds\frac{\partial
}{\partial q^i}\Big\vert_p\right) \,,\qquad \mathcal{V}_\ell(p)
=\left(0_{\pi(p)}, \ds\frac{\partial}{\partial
u^\ell}\Big\vert_p\right) .
\end{equation}

From now we will denote by $Sec(\mathcal{T}^EP)$ the set of sections of the projection $   \widetilde\tau_P :\mathcal{T}^EP\to
 P$. Locally, if a section $Z \in Sec(\mathcal{T}^EP)$ writes as
$Z=Z^\alpha\mathcal{X}_\alpha +V^\ell\mathcal{V}_\ell $,
then the expression of the associated vector field is
$$\rho^{\pi}(Z)= \rho^i_\alpha Z^\alpha\derpar{}{q^i} +
V^\ell\derpar{}{u^\ell}\in \vf(P)\,. $$

Thus, one can observe that the map $\rho^{\pi}$ induces a $\mathcal{C}^\infty(P)$-modules homomorphism
$$\rho^\pi\colon
Sec(\mathcal{T}^EP)\to \vf(P) .$$

The Lie bracket structure on $Sec(\mathcal{T}^EP)$ can be defined from its value on the elements of the local basis  $\{\mathcal{X}_\alpha,\,\mathcal{V}_\ell\}$, which it is characterized by
 the relations
\begin{equation}\label{lie brack k-prol}\begin{array}{lll}
\lcf\mathcal{X}_\alpha,\mathcal{X}_\beta\rcf^{\pi}=
\mathcal{C}^\gamma_{\alpha\beta}\mathcal{X}_\gamma \,, &
\lcf\mathcal{X}_\alpha,\mathcal{V}_\ell\rcf^{\pi}=0 \,, &
\lcf\mathcal{V}_\ell,\mathcal{V}_\varphi\rcf^{\pi}=0\,,
\end{array}
\end{equation}
where $\mathcal{C}^\gamma_{\alpha\beta}$ are the structure functions associated with the Lie bracket of sections of $E\rightarrow Q$.

The exterior differential
$$\d^{\mathcal{T}^EP}\colon  {\it Sec}(\bigwedge^l
(\mathcal{T}^EP)^{\,*})\to {\it Sec}(\bigwedge^{l+1}
(\mathcal{T}^EP)^{\,*})$$ 
is therefore determined by
\begin{equation}\label{dtp}\begin{array}{lclclcl} \d^{\mathcal{T}^EP}q^i &=&
\rho^i_\alpha\mathcal{X}^\alpha\,,&\qquad & \d^{\mathcal{T}^EP}u^\ell ,
&=&
\mathcal{V}^\ell \,,\\
\d^{\mathcal{T}^EP}\mathcal{X}^\gamma &=&
-\displaystyle\frac{1}{2}\mathcal{C}^\gamma_{\alpha\beta}
\mathcal{X}^\alpha\wedge\mathcal{X}^\beta\;,&\qquad &
\d^{\mathcal{T}^EP}\mathcal{V}^\ell&=&0 \,,
\end{array}
\end{equation}
where $\{\mathcal{X}^\alpha,\mathcal{V}^\ell\}$ is the dual basis of
$\{\mathcal{X}_\alpha,\mathcal{V}_\ell\}$.

\begin{example}{\rm

In the case of $E=TQ$, the prolongation $\mathcal{T}^{TQ}TQ$ of this Lie algebroid over the projection $\tau_Q: TQ \to Q$ may be identify with
$TTQ$ with the standard Lie algebroid structure over $TQ$.}
\end{example}

\begin{example}\label{ejemploprolongado1}{\rm  
Let $\al$ be a real Lie algebra of finite dimension. $\al$ is a Lie algebroid over a single point $Q=\{q\}$. We will describe the prolongation  $\mathcal{T}^{\al}\al$ of the Lie algebroid
$\tau_{\al}:\al \to Q=\{q\}$ over the proper fibration $\tau_{\al}:\al \to Q=\{q\}$.

We have the identification
$$\begin{array}{ccl}
     \mathcal{T}^{\al}\al=\{(\xi_1,v_{\xi_2})\in\al\times T\al\}   & \equiv & \al\times(\al\times\al)\\
     (\xi_1,v_{\xi_2})& \equiv & (\xi_1,\xi_2,\xi_3) ,
\end{array}
$$ 
where $v_{\xi_2} \simeq (\xi_2,\xi_3)$.

The vector bundle projection $\tilde\tau_{\al}: \mathcal{T}^{\al}\al\equiv3\al\to\al$ is given by $\tilde\tau_{\al}(\xi_1,\xi_2,\xi_3)=\xi_1$, and the anchor map is $\rho^\tau : \al\times(\al\times\al) \to  T\al$, $\rho^\tau(\xi_1,\xi_2,\xi_3)=(\xi_2,\xi_3)\in T_{\xi_2}\al.$

%\bcr A LA BASE DE $\al$ se le llamo antes  $\xi_A$ \enc

Let $\{e_{A}\}$ be a basis of the Lie algebra $\al$ and $y^{A}$ the
induced local coordinates on $\al,$ that is,
$\xi=y^{A}e_{A}$. Also this basis   induces a basis of sections   of
$\tilde\tau_{\al}: \mathcal{T}^{\al}\al\equiv3\al\to\al$ as
$$\mathcal{X}_A(\xi)=(\xi,e_{A},0),\qquad \mathcal{V}_A(\xi)=\left(\xi,0,\frac{\partial}{\partial y^{A}}\Big\vert_{\displaystyle\xi}\right),$$ 
see \eqref{base k-prol},
since the anchor map of $\al$ is the zero constant function.

The Lie bracket structure on $Sec(\mathcal{T}^\al \al)$  is characterized by
 the relations \eqref{lie brack k-prol}, that is, 
 
 $$\lcf\mathcal{X}_A,\mathcal{X}_B\rcf^{\tau}= \mathcal{X}_{[e_A,e_B]  } ,  \quad\lcf\mathcal{X}_A,\mathcal{V}_B\rcf^{\tau}=0, \quad 
 \lcf\mathcal{V}_A,\mathcal{V}_B\rcf^{\tau}=0.$$

%Moreover 
%$$\rho^{\tau}(\mathcal{X}_A)(\xi)=(\xi,0),\qquad \rho^{\tau}(\mathcal{V}_A)(\xi)=\left(\xi,\frac{\partial}{\partial y^{A}}\right).$$

%The basis $\{\mathcal{X}_A,\mathcal{V}_A\}$ induces adapted coordinates
%$(y^{A},z^{A},v^{A})$ in $3\al$ and therefore a
%section $Y$ on $Sec(\tau_{\al}^{(1)})$ is written as
%$$Y(\xi)=z^{A}(\xi)\mathcal{X}_A+v^{A}(\xi)\mathcal{V}_A .$$ 
%Thus, the vector field $\rho^{\pi}(Y)\in\mathfrak{\al}$ has the local expression
%$\rho^{\pi}(Y)=v^{A}(\xi)\frac{\partial}{\partial
%y^{A}}\Big{|}_{\xi}$

%\textcolor{blue}{The Lie algebroid structure on
%$\tau_{\al}^{(1)}$ is determined by the Lie bracket $\lcf (\xi,\tilde{\xi}),(\eta,\tilde{\eta})\rcf=([\xi,\eta],0$ 
% for constant sections $(\xi,\tilde{\xi}),(\eta,\tilde{\eta})$.}

%\textcolor{red}{Lo que esta abajo es cierto, pero quizas para este ejemplo es suficiente decir como se escribe el bracket de secciones constantes.}
%Finally, the Lie algebroid structure on
%$\tau_{\al}^{(1)}$ is determined by the Lie bracket $\lcf (\xi,\tilde{\xi}),(\eta,\tilde{\eta})\rcf=([\xi,\eta]+\tilde{\xi}(\eta)-\tilde{\eta}(\xi),\tilde{\xi}(\tilde{\eta})-\tilde{\eta}(\tilde{\xi}))$ with $\xi,\tilde{\xi},\eta,\tilde{\eta}\in\mathfrak{g}$ and where $\tilde{\xi}(\eta)$ should be understood as differentiating $\eta$ in the direction of $\tilde{\xi}$ \bcr (see \cite{cordero} for details). \enc }
}\end{example}

\begin{example}\label{ejemploprolongado2}
{\rm Consider a Lie algebra $\al$ acting on a manifold $Q$. Thus,
we have a Lie algebra homomorphism $\al\to\mathfrak{X}(Q)$ mapping
every element $\xi$ of $\al$ to the associated fundamental vector field $\xi_{Q}$ on $Q$.

We consider the  Lie algebroid 
$\tau_{Q\times\al}: Q\times\al \to Q$  with anchor map $$\rho: (q,\xi)\in Q\times \al \longmapsto \rho (q,\xi)=-\xi_Q(q) \in TQ.$$
Identifying $$\,T(Q\times \al)=TQ\times T\al=TQ\times 2\al\, $$ (where $2\al=\al \times \al$), an element of
the prolongation of the Lie algebroid $\tau_{Q\times\al}: Q\times\al \to Q$
$$\mathcal{T}^{Q\times\al}(Q\times\al)
=(Q\times\al)\times_{TQ} T( Q\times\al)=(Q\times\al)\times_{TQ} (TQ\times \al\times \al) $$  over the bundle projection $\tau_{Q\times\al}$ is
of the form $$((q,\xi),(v_{q},\eta,\tilde{\eta})) ,$$ where
$q\in Q$, $v_{q}\in T_{q}Q$ and $(\xi,\eta,\tilde{\eta})\in 3\al$, together with the condition $T\tau_{Q\times\al}(v_{q},\eta, \tilde{\eta}))=\rho(q,\xi)$ which implies that
$v_{q}=-\xi_{Q}(q).$ Therefore, we can identify $\mathcal{T}^{Q\times\al}(Q\times\al)$ with the vector bundle $\tilde\tau_{Q\times\al} : (Q\times\al) \times (\al\times\al) \to Q\times\al$ as follows
$$
\begin{array}{ccc}
     \mathcal{T}^{Q\times\al}(Q\times\al)& \equiv &  (Q\times\al) \times (\al\times\al)  \\
    ((q,\xi),(v_{q},\eta,\tilde{\eta}))   &  \equiv & (q, \xi,\eta, \tilde{\eta}) \, .
\end{array}
$$
Under this identification, the anchor map is given by
$$
\begin{array}{ccccll}
    \rho^{\tau} & : & (Q\times\al) \times (\al\times\al)& \longrightarrow & TQ \times \al \times \al\\ 
 &  &   (q,\xi,\eta,\tilde{\eta}) & \longmapsto & \rho^{\tau}(q,\xi,\eta,\tilde{\eta})= (-\xi_{Q}(q),\eta,\tilde{\eta}) \, .\\
     
\end{array}
$$

Given a basis
$\{e_{A}\}$ of $\al$, the basis $\{\mathcal{X}_A,\mathcal{V}_A\}$ of
sections of $\mathcal{T}^{{Q\times\al}}(Q\times\al) \to Q \times \al$ is given
by
$$\mathcal{X}_A(q,\xi)=(q,\xi,e_{A},0),\quad
\mathcal{V}_A(q,\xi)=(q,\xi,0,e_{A}).$$ 
%Moreover,
%$$\rho^{\pi}(\mathcal{X}_A(q,\xi))=\left(q,-(e_{A})_{Q}(q),\xi,0\right)
%,\quad\rho^{\pi}(\mathcal{V}_A(q,\xi))=(q,0,\xi,e_{A}).$$
Finally, the Lie bracket structure on $Sec(\mathcal{T}^{Q\times\al}(Q\times\al))$ is characterized by
 $$\lcf\mathcal{X}_A,\mathcal{X}_B\rcf^{\tau}= \mathcal{X}_{[e_A,e_B]  } ,  \quad\lcf\mathcal{X}_A,\mathcal{V}_B\rcf^{\tau}=0, \quad 
 \lcf\mathcal{V}_A,\mathcal{V}_B\rcf^{\tau}=0.$$

%$\lcf (\xi,\tilde{\xi}),(\eta,\tilde{\eta})\rcf_{\mathcal{T}^{Q\times\al}(Q\times\al)}=([\xi,\eta],0)$. 
}
\end{example}

\begin{example}\label{ejemploprolongado3} {\rm 
Let us describe the $E$-tangent bundle to $E$ in the case of $E$
being an Atiyah algebroid induced by a trivial principal $G-$bundle
$\pi:G\times Q\to Q.$ In such case, by left trivialization we get
the Atiyah algebroid, the vector bundle $$\tau_{\al\times TQ}:\al\times TQ\to Q.$$ 

For $X\in\mathfrak{X}(Q)$ and $\xi\in\al$,  
we may consider sections $X^{\xi}:Q\to\al\times TQ$ of the
Atiyah algebroid given by $$X^{\xi}(q)=(\xi,X(q))\hbox{ for }q\in Q.$$
Moreover, the anchor map $\rho: \al \times TQ \to TQ$ is defined by $\rho(X^{\xi}(q))=X(q)$ and the Lie bracket of sections is given by $\lcf X^{\xi},Y^{\xi}\rcf_{\al\times 
TQ}=([X,Y]_{TQ},[\xi,\eta]_{\al})$.

%Now, if $(\xi,v_{q})\in\al\times T_{q}Q$, then the fiber of $\mathcal{T}^{\al\times TQ}(\al\times TQ)$ over
%$(\xi,v_q)$ is
%\begin{align}
%\mathcal{T}^{\al\times TQ}_{(\xi,v_q)}(\al\times TQ)=\bigg\{((\eta,u_q),(\chi,Z_{v_{q}}))\in\al\times& T_{q}Q\times\al\times T_{v_q}(TQ)\nonumber\\
%&\hbox{such that } u_{q}=T_{v_q}\tau (Z_{v_q})\bigg\}.\nonumber\end{align} 
%This implies that
%$\mathcal{T}^{\al\times TQ}_{(\xi,v_q)}(\al\times TQ)$ may be
%identified with the space $2\al\times T_{v_q}(TQ).$
Identifying $$\,T(\al \times TQ)=T\al\times TTQ=\al \times \al \times TTQ\, ,$$ 
an element of
the prolongation of the Lie algebroid $\tau_{\al\times TQ}: \al\times TQ \to Q$
$$\mathcal{T}^{\al \times TQ}(\al \times TQ)
=(\al \times TQ)\times_{TQ} T(\al \times TQ)=(\al \times TQ)\times_{TQ} (\al \times \al\times TTQ) $$  over the bundle projection $\tau_{\al\times TQ}$ is of the form 
$$((\xi,v_q),(\eta, \tilde{\eta}, X_{u_q})),$$ 
together with the condition $T\tau_{\al\times TQ}(\eta, \tilde{\eta}, X_{u_q}))=\rho(\xi, v_q)$, which implies that
$u_{q}=v_{q}.$
Thus, we may identify $\mathcal{T}^{\al\times TQ}(\al\times TQ)$ with the vector bundle
$\tilde{\tau}_{\al\times TQ}: \al\times 2\al\times TTQ\to\al\times TQ$ as follows
$$
\begin{array}{ccc}
     \mathcal{T}^{\al\times TQ}(\al\times TQ)& \equiv &  \al\times 2\al\times TTQ  \\
    ((\xi,v_q),(\eta,\tilde{\eta},X_{v_q}))   &  \equiv & (\xi,(\eta,\tilde{\eta}),X_{v_q}) ,
\end{array}
$$
whose vector bundle projection is $\tilde{\tau}_{\al\times TQ}(\xi,((\eta,\tilde{\eta}),X_{v_q}))=(\xi,v_q)$. 

Under this identification, the anchor map is given by
$$
\begin{array}{ccccll}
    \rho^{\tau} & : & \al\times 2\al\times TTQ& \longrightarrow & \al\times\al\times TTQ\\ 
 &  &   (\xi,((\eta,\tilde{\eta}),X)) & \longmapsto & ((\eta,\tilde{\eta}),X) \, .\\
    
\end{array}
$$

If $(\eta,\tilde{\eta})\in 2\al$ and
$X\in\mathfrak{X}(TQ)$, then one may consider the section
$((\eta,\tilde{\eta}),X) : \al \times TQ \to \mathcal{T}^{\al\times TQ}(\al\times TQ)$ given by
$$((\eta,\tilde{\eta}),X)(\xi,v_{q})=(\xi,((\eta,\tilde{\eta}),X(v_q))) , \hbox{
for }(\xi,v_q)\in\al\times T_{q}Q.$$ 
Moreover, the Lie bracket of these sections is given by
$$\lcf((\eta,\tilde{\eta}),X),((\xi,\tilde{\xi}),Y)\rcf^{\tau}=(([\eta,\xi]_{\al},0),[X,Y]_{TQ}).$$ 
%and the anchor map
%$\rho_1:\al\times 2\al\times %TTQ\to\al\times\al\times TTQ$
%is defined as
%$$\rho_{1}(\xi,((\eta,\tilde{\eta}),X))=((\xi,\tilde{\eta}),X).$$
}
\end{example}

%\subsection{Contact Lie algebroids}

%In this section we introduce the definition of a contact Lie algebroid which will be considered in section ..

\section{Contact Lagrangian formalism on Lie algebroids}\label{cftla}

In this section, the contact Lagrangian formalism is extended to the general setting of Lie algebroids.
First,  we will introduce some
  geometric ingredients which are necessary to develop the contact Lagrangian   formalism on Lie algebroids.

\begin{definition}\label{defcontLiealg}
A Lie algebroid $(E,\lcf\cdot,\cdot\rcf_E, \rho)$ of rank $2k+1$ over a manifold $M$ of dimension $n$ is said to be contact if it admits a $1$-section $\eta$ of the vector bundle $\Lambda^1 E^* \to M$ such that
\begin{equation}
\eta \wedge (\d^E \eta)^k \neq 0  \,\, \text{everywhere on M} , 
\end{equation}
where $\d^E : Sec(\bigwedge^l E^*)\to Sec(\bigwedge^{l+1} E^*)$ is the exterior differential of $E$. We say that $\eta$ defines a contact structure on $E$.
\end{definition}
The above definition is equivalent to say that the fibres $(E_x, \eta_x)$ have a contact structure, and therefore they have odd dimension $2 k + 1$. We also notice that $(\d^E \eta)_{\vert_{ker\, \eta}}$ is a non-degenerate $2$-section.

\begin{proposition}
Given a contact Lie algebroid $(E,\eta)$, there exists a unique section $\Rb \in Sec(E)$, called the {\it Reeb section}, such that
\begin{equation}\label{Reebsec-condition}
			i_\Rb \, \d^E\eta = 0\,, \qquad
			i_\Rb \, \eta = 1\,.
\end{equation}
\end{proposition}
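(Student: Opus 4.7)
My plan is to mimic the classical argument for contact manifolds, using the bundle isomorphism $\flat^E\colon E\to E^*$ defined fibrewise by
\[
\flat^E(a) = i_a\,\d^E\eta + (i_a\eta)\,\eta,
\]
and then set $\Rb := (\flat^E)^{-1}(\eta)$. Once $\flat^E$ is shown to be an isomorphism, uniqueness of $\Rb$ is immediate: any section satisfying \eqref{Reebsec-condition} is mapped by $\flat^E$ to $\eta$.

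To show that $\flat^E$ is injective (and hence an isomorphism, since $\rk E = \rk E^*$), I would argue pointwise at an arbitrary $x\in M$. The key observation is that the fibre $E_x$ has dimension $2k+1$, so $(\d^E\eta)_x^{k+1}\in\Lambda^{2k+2}E_x^*=0$. Applying the contraction $i_a$ (which is a graded derivation) gives
\[
0 = i_a\bigl((\d^E\eta)^{k+1}\bigr) = (k+1)\,(i_a\d^E\eta)\wedge(\d^E\eta)^{k},
\]
so $(i_a\d^E\eta)\wedge(\d^E\eta)^k = 0$ for every $a\in E_x$. Therefore
\[
\flat^E(a)\wedge(\d^E\eta)^k = (i_a\eta)\,\eta\wedge(\d^E\eta)^k.
\]
If $\flat^E(a) = 0$, the nonvanishing of $\eta\wedge(\d^E\eta)^k$ forces $i_a\eta = 0$, which in turn forces $i_a\d^E\eta = 0$. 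Since $a\in\ker\eta_x$ and $(\d^E\eta)_{|\ker\eta}$ is nondegenerate (as noted just before the proposition), we conclude $a=0$.

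Having defined $\Rb := (\flat^E)^{-1}(\eta)$, the relation $\flat^E(\Rb) = \eta$ reads
\[
i_\Rb\,\d^E\eta = \bigl(1 - i_\Rb\eta\bigr)\,\eta.
\]
To extract the two conditions in \eqref{Reebsec-condition}, I would contract this equality once more with $\Rb$: using $i_\Rb i_\Rb\,\d^E\eta = 0$, it follows that $(i_\Rb\eta)(1-i_\Rb\eta)=0$, so either $i_\Rb\eta = 0$ or $i_\Rb\eta = 1$. The first possibility is excluded because it would give $\eta = i_\Rb\,\d^E\eta$ and hence $\eta\wedge(\d^E\eta)^k = (i_\Rb\,\d^E\eta)\wedge(\d^E\eta)^k = 0$, contradicting the contact condition. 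Therefore $i_\Rb\eta = 1$ and, consequently, $i_\Rb\,\d^E\eta = 0$.

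The main thing to be careful about is the smoothness of $\Rb$ as a section of $E$. This is not really an obstacle: $\flat^E$ is a $\Cinfty(M)$-linear vector bundle morphism built from the smooth tensors $\eta$ and $\d^E\eta$, and the fibrewise argument above shows it is an isomorphism at every point, so its inverse is a smooth bundle isomorphism and $\Rb = (\flat^E)^{-1}(\eta)$ is a genuine section of $E$. The only delicate point in the argument itself is verifying the nondegeneracy of $(\d^E\eta)_{|\ker\eta}$ from the top-form condition, but this is exactly the standard linear-algebra fact about contact covectors applied fibre by fibre, which the paper already invokes immediately before the proposition.
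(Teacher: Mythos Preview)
The paper states this proposition without proof, presumably because the argument is the verbatim fibrewise transcription of the classical one for contact manifolds (cf.\ the discussion around the isomorphism $\flat$ in Section~\ref{sec:contact-geometry}). Your proof is correct and is exactly this transcription: you build the bundle map $\flat^E(a)=i_a\,\d^E\eta+(i_a\eta)\,\eta$, show it is a fibrewise isomorphism using the top-form condition, and read off $\Rb=(\flat^E)^{-1}(\eta)$. The trick of wedging with $(\d^E\eta)^k$ to separate the two summands, and the exclusion of the spurious root $i_\Rb\eta=0$ via the same identity $(i_a\d^E\eta)\wedge(\d^E\eta)^k=0$, are both clean and correct. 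Nothing is missing.
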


%\subsection{ The manifold $E\times \r $.}\

The standard  contact  Lagrangian formalism is developed on the  bundle $T Q\times \r$. Since we are thinking of a Lie algebroid $\tau:E\rightarrow Q$ as a substitute of the tangent bundle, it is natural to consider
 the projection map $\pi \colon E\times \r\to Q  
 $ given by $\pi (a_{q},s) =q$. %where now $P= \er$ and $( a_q,s)$ is an element of $E\times \r$.

If  $(q^i,y^\alpha)$ are local coordinates on
$\tau^{\,-1}(U)\subseteq E$, where $U$ is an open subset of $Q$, adapted to the local basis of section $\{ e_{\alpha} \}$, then the induced local coordinates $( q^i,y^\alpha,s)$ on $\pi^{\,-1}(U)\subseteq E\times \r$ are given by
\begin{equation}\label{cke}
q^i( a_q,s)=q^i(q),\quad
y^\alpha( a_q,s)=y^\alpha( 
a_q),\quad s( a_q,s)=s , 
\end{equation}
where the projection 
$\pi \colon E\times \r\to Q $ is locally given by
  $\pi(q^i,y^\alpha,s)=(q^i)$.

%Thus any $a_q \in E$ locally writes $a_q=y^\alpha(a_q)\, e_\alpha(q) $.

\subsection{The contact Lagrangian prolongation}\label{lagran prolong}\

Let us consider the prolongation  of a Lie algebroid $E$ over the fibration $\pi \colon E\times \r\to Q$. $\tec$ is the vector bundle defined by
\begin{equation}\label{lagrangian prolongation}
\tec=\{(a_q,v_{(b_q,s)})\in E_q\times T_{( b_q,s)}(E\times \r) /\;
\rho (a_q)=T \pi{ ( b_q,s)}(v_{( b_q,s)})\} \,.
\end{equation} 
Therefore by \eqref{vicoord}, we know that
$(a_q,v_{(b_q,s)})\in \tec $
if, and only if,
$$
v_{(b_q,s)}=y^\alpha(a_q) \,\rho^i_\alpha(q)\derpar{}{q^i}+  \dot{y}^\alpha \derpar{}{y^\alpha}       +  \dot{s} \derpar{}{s}  ,   
$$
 being $(q^i,y^\alpha,s, z^\alpha,\dot{y}^\alpha,\dot{s})$, $i=1,\ldots,n$, $\alpha=1,\ldots,m$, the induced local coordinates on $\tec$, where
\begin{equation}\label{local coord cprol h}
\begin{array}{lcllcl}
q^i(a_q,v_{( b_q,s)}) &=& q^i({q})
  \;,\quad & z^\alpha(a_q,v_{( b_q,s)})&=&y^\alpha(b_q)
\;,\\\noalign{\medskip}
y^\alpha(a_q,v_{( b_q,s)})&=&y^\alpha(a_{q})
  \;,\quad &
  \dot{y}^\alpha(a_q,v_{( b_q,s)})&=&v_{( b_q,s)}(y^\alpha)\;,\\\noalign{\medskip}
  s(a_q,v_{( b_q,s)}) &=&s
\;,
\quad & \dot{s}(a_q,v_{( b_q,s)})
&=&v_{( b_q,s)}(s)\;. \\
\end{array}
\end{equation}
 
 %We deduce the following properties (see \cite{CLMMM-2006,LMM-2005,Mart-2001} for standard properties of the prolongation Lie algebroid):

From Section \ref{Sec 5.2.}, we deduce the following properties of $\tec$.
\begin{enumerate}
\item   The vector bundle $\tec$ with projection $\widetilde\tau_{\er}\colon  \tec\to \er$ given by
$\widetilde\tau_{\er}(a_q,v_{(b_q,s)})=(b_q,s)  $
has a Lie algebroid structure
$(\lcf\cdot,\cdot\rcf^{\pi },\rho^{\pi }\,)$,  
 where the anchor map
$\rho^{\pi }\colon  \tec\to
T( \er)$ given by $\rho^{\pi }(a_q,v_{(b_q,s)})=v_{(b_q,s)}$
is the canonical projection on the second factor. 
We refer to this Lie algebroid as the {\it contact Lagrangian prolongation}.
 
The following diagram shows the different projections defined from $\tec$
$$\xymatrix@=8mm{ \mathcal{T}^E(E\times \r)\equiv
E\times_{T Q}T( E\times \r)\ar[d]_-{   \tau_1}\ar[r]^-{\rho^\pi}\ar@/^{10mm}/[rr]^-{   \widetilde\tau_{E\times \r}}
& T( E\times \r)\ar[r]^-{\tau_{E\times \r}}\ar[d]^-{T\pi} & E\times \r\ar[d]^-{\pi}
\\
E\ar[r]^-{\rho} & T Q\ar[r]^-{\tau_Q} & Q }$$ where
\begin{equation}\label{projection prol}
\hspace{-0.4cm}\begin{array}{lclclclclcl}
   \tau_1(a_q,v_{(b_q,s)})&=&a_q &, &
\rho^{\pi}(a_q,v_{(b_q,s)})&=&v_{(b_q,s)}&, &
    \widetilde{\tau}_{E\times \r}(a_q,v_{(b_q,s)})&=& (b_q,s),
\end{array}\end{equation}
being $a_q \in E,\; v_{(b_q,s)}\in T_{(b_q,s)}(E \times \r)$ and $
(b_q,s) \in E \times \r$.

\item The set
  $ \mathcal{X}_\alpha,\,\mathcal{V}_\alpha, \,  \mathcal{V}_s\,\colon  \er\to\mathcal{T}^E(\er)$ 
given by 
\begin{equation}\label{basec}
\small{ \mathcal{X}_\alpha( b_q,s) =
\left(e_\alpha({q}), \rho^i_\alpha({q})\displaystyle\frac{\partial
}{\partial q^i}\Big\vert_{( b_q,s)}\right), \,\,
\mathcal{V}_\alpha( b_q,s)
=\left(0_{q}, \displaystyle\frac{\partial}{\partial
y^\alpha}\Big\vert_{( b_q,s)}\right) , \,\,
\mathcal{V}_s( b_q,s)
=\left(0_{q}, \displaystyle\frac{\partial}{\partial
s}\Big\vert_{( b_q,s)}\right)\,}
\end{equation}
is a local basis of $Sec(\tec)$, the set of sections of 
$\widetilde\tau_{E\times \r}$ (see \eqref{base k-prol}).

\item   The anchor map $\rho^{\pi }\colon \mathcal{T}^E(\er )\to \mathfrak{X}(\er )$
allows us to associate a vector field on $ \er \stackrel{\xi}{\tol}  \mathcal{T}^E(\er )\stackrel{\rho^{\pi }}{\tol} \mathfrak{X}(\er )  $ with each section $\xi\colon \er \to\tec $.
 
If  locally 
$\xi= \xi_2^\alpha\mathcal{X}_\alpha+\xi_1^\alpha \mathcal{V}_\alpha  +\xi_0\mathcal{V}_s$,
 then
\begin{equation}\label{rholksimc}
\rho^{\pi }(\xi)= \rho^i_\alpha
\xi_2^\alpha\derpar{}{q^i} + \xi_1^\alpha\derpar{}{y^\alpha} +\xi_0\derpar{}{s}\in
\vf(\er)\,.\end{equation}         

\item The Lie bracket of two sections of $\widetilde\tau_{E\times \r}$ is characterized by (see \eqref{lie brack k-prol}):
\begin{equation}\label{lie brack tecL}
\begin{array}{lclclclclcl}
\lcf\mathcal{X}_\alpha,\mathcal{X}_\beta\rcf^{\pi }&=&
\mathcal{C}^\gamma_{\alpha\beta}\mathcal{X}_\gamma\;,
&\quad &
\lcf\mathcal{X}_\alpha,\mathcal{V}_\beta\rcf^{\pi }&=&0
\;, &\quad
&
\lcf\mathcal{X}_\alpha,\mathcal{V}_s\rcf^{\pi }&=&
0\;,
\\\noalign{\medskip}
\lcf\mathcal{V}_\alpha,\mathcal{V}_\beta\rcf^{\pi }&=&0
\;, &\quad &
\lcf\mathcal{V}_\alpha,\mathcal{V}_s\rcf^{\pi }&=& 0\;,
&\quad &
\lcf\mathcal{V}_s,\mathcal{V}_s\rcf^{\pi }&=& 0\,.
\end{array}
\end{equation}

\item If $\{\mathcal{X}^\alpha,
\mathcal{V}^\alpha,\mathcal{V}^s\}$ is the dual basis of  $\{\mathcal{X}_\alpha, \mathcal{V}_\alpha,\mathcal{V}_s\}$, then the exterior differential is given locally, see \eqref{dtp}, by
\begin{equation}\label{difksimc}
\begin{array}{l}
\d^{\tec}q^i =
\rho^i_\alpha\mathcal{X}^\alpha\,,\qquad  \d^{\tec}y^\alpha=\mathcal{V}^\alpha \,,\qquad \d^{\tec}s= \mathcal{V}^s
\\\noalign{\bigskip} 
\d^{\tec}f=\rho^i_\alpha\derpar{f}{q^i}\mathcal{X}^\alpha
+ \derpar{f}{y^\alpha}\mathcal{V}^\alpha + \derpar{f}{s}\mathcal{V}^s\,,\quad \makebox{for all } f\in \mathcal{C}^\infty(\er)
\\\noalign{\bigskip} 
\d^{\tec}\mathcal{X}^\gamma =
-\displaystyle\frac{1}{2}\mathcal{C}^\gamma_{\alpha\beta}\mathcal{X}^\alpha\wedge\mathcal{X}^\beta\, \quad ,\quad
\d^{\tec}\mathcal{V}^\alpha=0, \quad \d^{\tec}\mathcal{V}^s=0\,.\end{array}
\end{equation}
From now on we are going to set the notation $\d = \d^{\tec}$.
\end{enumerate}

\begin{remark}\label{remark equivch}
Note that in the particular case $E= T Q$, the Lie algebroid $\tec$ reduces to $T( T Q\times \r)$.
%, since
%\begin{equation}\label{equivalence}\hspace{-1cm}\begin{array}{lcl}
%&&\mathcal{T}^{T Q}(T Q \times \r)
%\\\noalign{\medskip} &=&\{(u_{q},v_{(w_q,s)})\in T Q\times
%T( T Q \times \r)/ u_{q}=T \mathrm{p}_Q (w_q,s) (v_{(w_q,s)})\}\\\noalign{\medskip}
%&=&\{(T \pi (w_q,s) (v_{(w_q,s)}),v_{(w_q,s)})\in
%T Q\times T( T Q \times \r)/\; (w_q,s)\in
%T Q \times \r\}\\\noalign{\medskip}&\equiv&\{v_{ (w_q,s)}\in
%T( T Q\times \r)/\; (w_q,s)\in T Q\times \r\}\equiv T( T Q\times \r)\;.
%\end{array}\end{equation}  
\end{remark}

\subsection{Liouville sections and  vertical endomorphisms}\

One can define on $\tec$ two families of canonical objects: {\it Liouville section} and {\it vertical endomorphism}; which correspond to the {\it Liouville vector field} and {\it canonical tensor field} on $ T Q\times \r $ of Section \ref{CLS}.

\paragraph{\bf The vertical lifting}

We consider the projection on the first factor
${\tau}_1: \mathcal{T}^E(\er ) \to E$, $\tau_1(a_q,v_{( b_q,s)}) = a_q$.
An element $(a_q,v_{( b_q,s)})$
of $\mathcal{T}^E(\er )$
is said to be vertical if
${\tau}_1(a_q,v_{( b_q,s)})=0_{q}\in
E $. Thus, the vertical elements are   of the form
$(0_{q},v_{( b_q,s)})$ .

In particular, the tangent vector $v_{( b_q,s)}\in T_{( b_q,s)}(\er)$ is $\pi$-vertical, since by  (\ref{lagrangian prolongation}) we have 
  $
\rho (a_q)=T\pi (v_{( b_q,s)}) )\in T_qQ$ 
and   $a_ q=0_q$.

In a local coordinate system  $(q^i,y^\alpha,s)$  on $\er $, if $(a_q,v_{( b_q,s)})\in\tec$ is vertical, then $a_q=0_q$ and
$$v_{( b_q,s)}=  \dot y^\alpha \displaystyle\frac{\partial }{\partial
y^\alpha}\Big\vert_{( b_q,s)}+\dot s\derpar{}{s}\Big\vert_{( b_q,s)}\in
T_{( b_q,s)}( \er )\,.$$

\begin{definition}\label{lvastkec}

The vertical  lifting is defined as the mapping
 \begin{equation}\label{a-levantamientoc}
\begin{array}{rcl}
\Upsilon^{{\mathbf V}}:E\times_Q(\er ) & \longrightarrow &
\mathcal{T}^E(\er )
\\\noalign{\medskip}
 (a_q, (b_{q},s)) & \longmapsto &
\Upsilon^{{\mathbf V}}(a_q,(b_q,s))=\left(0_{q},(a_q)^{V}_{( b_q,s)}\right) \\
\end{array}
\end{equation} 
where $\;(a_q)^{V}_{( b_q,s)}\in
T_{( b_q,s)} (\er )$ is given by
\begin{equation}\label{verticalc}
(a_q)^{V}_{( b_q,s)}f=\displaystyle\frac{d}{dt}\Big\vert_{t=0}f( 
  b_{q}+t a_q )\,,\quad  \end{equation} for an arbitrary function $f\in \mathcal{C}^\infty(\er )$.
\end{definition}
The local expression of
$(a_q)^{V}_{(b_q,s)}$ is
\begin{equation}\label{localvertc}
(a_q)^{V}_{(b_q,s)}=y^\alpha(a_q)\displaystyle\frac{\partial}{\partial
y^\alpha}\Big\vert_{( b_q,s)}\in
T_{( b_q,s)}( \er )\,,
\end{equation}
and therefore $(a_q)^{V}_{( b_q,s)}\in T_{( b_q,s)}(\er )$ is $\pi $-vertical, since
$\pi(q^i,y^\alpha,s)=(q^i)$.

%  
%  \bcr
% $$\tau_{\er}\colon  \tec\longrightarrow \er \qquad 
%\tau_{\er}(a_q,v_{b_q,s)})=
%(b_q,s)  
%$$
%
%
% 
%
%\enc

\subsection{ The vertical endomorphism}\label{endvertA}
The {\rm vertical endomorphism} $S$ on  $\mathcal{T}^E(\er )$ is the map defined by
\begin{equation}\label{jtildeA}
\begin{array}{rccl}
S:&\mathcal{T}^E(\er ) & \longrightarrow &
\mathcal{T}^E(\er )\\\noalign{\medskip}
&(a_q,v_{( b_q,s)})&\longmapsto
&S(a_q,v_{( b_q,s)})=
\Upsilon^{{\mathbf V}}
(a_q,(b_q,s)) \,,
\end{array}
\end{equation} 
which locally writes
$$S(a_q,v_{( b_q,s)})=\left(0_q, y^\alpha(a_q)\displaystyle\frac{\partial}{\partial
	y^\alpha}\Big\vert_{( b_q,s)}\right)=
y^\alpha(a_q)
\mathcal{V}_\alpha( b_q,s) .$$
Now, from \eqref{basec} we have  
$$
S(\mathcal{X}_\alpha( b_q,s) )=\mathcal{V}_\alpha( b_q,s) , \qquad
S(\mathcal{V}_\alpha( b_q,s))=0 , \qquad S(\mathcal{V}_s( b_q,s))=0 ,
$$
and then $S$ has the local expression
%From (\ref{basec}) and (\ref{localxiac}) one deduces that $S$ has the following local expression:
\begin{equation}\label{localtildeSAksim}
S=\mathcal{V}_\alpha\otimes\mathcal{X}^\alpha\;.
\end{equation}

\begin{remark}\label{vertical tensor}\
 The endomorphism $S$ defined above will allow us to introduce the concept of {\it Lagrangian section} when we develop the contact Lagrangian formalism on Lie algebroids. Moreover, this mapping will give a characterization of certain sections of $\tec$ which we consider later.
 \end{remark}

 \paragraph{\bf The Liouville section}\label{Lagform1}\
The  {\rm  Liouville section} $\Delta $ is the section of
$\widetilde\tau_{\er }:\mathcal{T}^E(\er )\to \er $ given by
$\Delta ( b_q,s)=
\Upsilon^{\mathbf V}( b_q, (b_q,s))$.
Locally
$$\Delta ( b_q,s) 
=\left(0_q,y^\alpha(b_q)\displaystyle\frac{\partial}{\partial
y^\alpha}\Big\vert_{( b_q,s)}\right)= y^\alpha(b_q)\left(0_q,\displaystyle\frac{\partial}{\partial
y^\alpha}\Big\vert_{( b_q,s)}\right)=  y^\alpha(b_q)\mathcal{V}_\alpha( b_q,s) ,
$$
and thus  $\Delta $ has the local expression
\begin{equation}\label{Liouville kcosim}
\Delta =
y^\alpha\mathcal{V}_\alpha\;.
\end{equation}

  In the standard contact Lagrangian formalism, the Liouville vector field $\Delta$ allows us to define the energy function. Analogously as we will see below, the energy function can be defined in the Lie algebroid setting using the Liouville section $\Delta$.

\subsection{Second order   differential equations ({\sc sode}'s).}
\label{Sec 8.1.2.}

%
%\begin{definition}
%A tangent vector $v_{a_q} $ to $E$ at a point $a_q$ is called admissible
%$$
%\tau_*(a_q)(v_{a_q} )=\rho(a_q)
%$$
% A curve in E is admissible if its tangent vectors are
%admissible.
%\end{definition}
%The set of all admissible tangent vectors will be denoted
%$Adm(E).$
%
%$$
%Adm(E)= \{(a_q,v_{(a_q,s)})\in E\times T(E\times \r) \subset \tec
%$$
% 
%$$\tec=\{(a_q,v_{(b_q,s)})\in E\times T(E\times \r) /\;
%\rho (a_q)=T\tau (v_{( b_q,s)})\}\,\qquad p:E\times \r \to Q$$
%
%$$
%\tau_1:\tec \tol E, \quad\tau_1(a_q,v_{(b_q,s)})=a_q   \qquad  \tau_2:\tec \tol E\quad \tau_2(a_q,v_{(b_q,s)})=b_q
%$$
%
%$$
%Adm(E)= \{ z\in \tec \,:\, \tau_1(z)=\tau_2(z)\}
%$$
%

%In the standard  contact Lagrangian formalism one obtains the solutions of the Herglotz equations  as integral curves of certain second-order   differential equations ({\sc sode} in the sequel) on $T Q\times \r$. In order to introduce the analogous object on Lie algebroids, we note that in the standard case a {\sc sode} $\xi$ is a vector field on $T Q\times \r$, that is, a section of
%$T( T Q\times \r)\to T Q\times \r\,$, which satisfies certain properties. 

As we saw in Section \ref{CLS}, in the standard  contact Lagrangian formalism one obtains the solutions of the Herglotz equations  as integral curves of certain second-order differential equation ({\sc sode}) on $T Q\times \r$. Now we introduce the analogous object on Lie algebroids.

\begin{definition} A {\rm second order  differential equation ({\sc sode})} $\Gamma$ is a section of $\widetilde\tau_{\er }$ which satisfies the equation
$S(\Gamma)=\Delta$.   %\quad
%\mathcal{V}_s(\xi )=\delta \,,\qquad 1\leq A,B\leq k\,.
\end{definition}

% \begin{remark} $\Gamma$ take values in $Adm(E)$ if and only if $S(\Gamma)=\Delta  $
% \end{remark}

The local expression of a {\sc  sode} is
$ \Gamma=  
y^\alpha\mathcal{X}_\alpha+  f^\alpha\mathcal{V}_\alpha+  g\mathcal{V}_s \;,$
where $  f^\alpha, g \in\mathcal{C}^\infty(\er )$,
and the associated vector field  $\rho^{\pi }(\Gamma)\in\vf(\er )$ is given by
\begin{equation}\label{ sode asso co}  \rho^{\pi }(\Gamma)=
 \rho^i_\alpha y^\alpha \derpar{}{q^i} +
  f^\alpha\derpar{}{y^\alpha}+g\derpar{}{s} \,.
\end{equation}

Suppose that the curve
$\widetilde c\colon  I\subseteq\r\to \er $ is an integral  curve of a
{\sc  sode} $\Gamma $ (that is, it satisfies Equation \eqref{integral:curve}). If $\widetilde c$ is written locally as 
$\widetilde c( t)=(q^i( t),y^\alpha( t), s(t))$,
 then from  \eqref{ sode asso co} we deduce that \eqref{integral:curve} is locally equivalent to the identities
\begin{equation}\label{integral sect}
  \displaystyle\frac{\d
  q^i}{\d t}\Big\vert_{ t}= \rho^i_\alpha(q^i(t))y^\alpha(t)\;,\quad \displaystyle\frac{ \d y^\alpha}{\d t}\Big\vert_{ t}=f^\alpha(\widetilde c( t))\;, \quad \displaystyle\frac{\d s}{\d t}\Big\vert_{ t}=g(\widetilde c (t)) .
\end{equation}

%\begin{definition}
%A tangent vector $v_{a_q} $ to $E$ at a point $a_q$ is called admissible
%$$
%\tau_*(a_q)(v_{a_q} )=\rho(a_q)
%$$
% A curve in E is admissible if its tangent vectors are
%admissible.
%\end{definition}
%The set of all admissible tangent vectors will be denoted
%$Adm(E).$
%
%$$
%Adm(E)= \{(a_q,v_{(a_q,s)})\in E\times T(E\times \r) \subset \tec
%$$
% 
%$$\tec=\{(a_q,v_{(b_q,s)})\in E\times T(E\times \r) /\;
%\rho (a_q)=T\tau (v_{( b_q,s)})\}\,\qquad p:E\times \r \to Q$$
%
%$$
%\tau_1:\tec \tol E, \quad\tau_1(a_q,v_{(b_q,s)})=a_q   \qquad  \tau_2:\tec \tol E\quad \tau_2(a_q,v_{(b_q,s)})=b_q
%$$
%
%$$
%Adm(E)= \{ z\in \tec \,:\, \tau_1(z)=\tau_2(z)\}
%$$

\subsection{ Lagrangian formalism}\

In the remainder of this section, we will develop an intrinsic geometric framework, which allows  us to write the Herglotz equations associated with a Lagrangian function $L\colon \er \to\r$ on a Lie algebroid. We first introduce some geometric elements associated with $L$.

Let us consider
$$
(\mathcal{T}^E(\er ) )_{( b_q,s)} =\{(a_q,v_{( b_q,s)})\in \tec \,/\, a_q\in E, \rho(a_q)=T\pi(b_q,s) (v_{(b_q,s)})\} $$
the fibre of $\mathcal{T}^E(\er )  \to \er$ over the point $( b_q,s)$.

\paragraph{\bf Poincar\'{e}-Cartan and contact sections}\

The {\it Poincar\'{e}-Cartan $1$-section}  $\Theta_L: \er   \to  (\mathcal{T}^E(\er ))^{*}$, 
where $$\Theta_L( b_q,s)\colon (\mathcal{T}^E(\er ) )_{( b_q,s)} \to \r$$  
is the linear map defined by
\begin{equation}\label{ThetaLAc}
\Theta_L( b_q,s)(a_q,v_{( b_q,s)})=\d
L( b_q,s)(S_{( b_q,s)}(a_q,v_{( b_q,s)})) = [\rho^{\pi }(S_{( b_q,s)}(a_q,v_{( b_q,s)}))]L ,
\end{equation}
since the last identity follows from \eqref{rholksimc}, \eqref{difksimc}  and \eqref{localtildeSAksim}.

One can define the following $1$-form $\eta_L$ associated with $L$ as follows
\begin{equation}\label{etal}
\eta_L= \mathcal{V}^s-\Theta_L ,
\end{equation}
then, its differential
$\d\eta_L:\er  \to
\Lambda^2(\mathcal{T}^E(\er ))^{*}, $
is given by
$$
\d\eta_L =\d ( \mathcal{V}^s-\Theta_L)
=\d  \mathcal{V}^s- \d  \Theta_L=- \d  \Theta_L \, .
$$
 
 From
\eqref{rholksimc}, \eqref{localtildeSAksim} and \eqref{ThetaLAc}, we deduce the local expressions of $\Theta_L$ and $\eta_L$
\begin{equation}\label{local thetac}
\Theta_L=\displaystyle\frac{\partial L}{\partial y^\alpha
}\mathcal{X}^\alpha \;,\qquad  \eta_L=\mathcal{V}^s-\displaystyle\frac{\partial L}{\partial y^\alpha
}\mathcal{X}^\alpha  ,
\end{equation}
%Tal vez 
%$$
%\eta_L= \displaystyle\frac{\partial L}{\partial s
%}\mathcal{V}^s-\displaystyle\frac{\partial L}{\partial y^\alpha
%}\mathcal{X}^\alpha \qquad ???
%$$
and from the local expressions \eqref{rholksimc}, \eqref{lie brack
tecL}, \eqref{difksimc}  and \eqref{local thetac}, we obtain
%{\small\begin{equation}\label{local omega co}
%\d^{\tec}\eta_L = \displaystyle\frac{1}{2} \left(\rho^i_\beta \displaystyle\frac{\partial
%^2 L}{\partial q^i\partial y^\alpha} - \rho^i_\alpha
%\displaystyle\frac{\partial ^2 L}{\partial q^i\partial y^\beta} +
%\mathcal{C}^\gamma_{\alpha\beta}\displaystyle\frac{\partial L}{\partial
%y^\gamma} \right) \mathcal{X}^\alpha \wedge \mathcal{X}^\beta
%+
%\derpars{L}{s}{y^\alpha}\mathcal{X}^\alpha\wedge \mathcal{V}^s +
%\displaystyle\frac{\partial ^2 L}{\partial y^\beta\partial y^\alpha}\,
%\mathcal{X}^\alpha \wedge \mathcal{V}^\beta\,.
%\end{equation}}
\begin{equation}\label{local omega co}
	\d\eta_L = \left(\rho^i_\beta \displaystyle\frac{\partial
		^2 L}{\partial q^i\partial y^\alpha} + \displaystyle\frac{1}{2}
	\mathcal{C}^\gamma_{\alpha\beta}\displaystyle\frac{\partial L}{\partial
		y^\gamma} \right) \mathcal{X}^\alpha \wedge \mathcal{X}^\beta
	 +
	\displaystyle\frac{\partial ^2 L}{\partial y^\beta\partial y^\alpha}\,
	\mathcal{X}^\alpha \wedge \mathcal{V}^\beta
	+ 	\derpars{L}{s}{y^\alpha}\mathcal{X}^\alpha\wedge \mathcal{V}^s \,.
	\end{equation}

\begin{definition}
We say that the Lagrangian function $L$ is {\it regular} if  the matrix  $\left(\ds\frac{\partial^2L}{\partial y^\alpha\partial y^\beta}\right)$ is non-singular.
\end{definition}
 
\begin{remark}
If the Lagrangian function $L$ is regular, then from \eqref{local thetac} and \eqref{local omega co} we deduce that $\eta_L$ defines a contact structure in the sense of Definition \ref{defcontLiealg}, since
$$\eta_{L}\wedge (\d \eta_{L})^{m} = \det \left(\frac{\partial^2 L}{\partial y^\alpha \partial y^\beta}\right) \mathcal{X}^1 \wedge \dots \wedge \mathcal{X}^m \wedge \mathcal{V}^1 \wedge \dots \wedge \mathcal{V}^m \wedge \mathcal{V}^s ,$$
and the Reeb section $\Rb_L$ associated to $L$, characterized by the two conditions \eqref{Reebsec-condition}, is locally given by
\begin{equation}\label{ReebLloc}
\Rb_L = \mathcal{V}_s - \ds\frac{\partial^2 L}{\partial s \partial y^\beta} \, \left(\ds\frac{\partial^2 L}{\partial y^\alpha \partial y^\beta}\right)^{-1} \mathcal{V}_\alpha .
\end{equation}    
\end{remark}

\paragraph{\bf Energy function}\

The {\it energy function}
$E_L:\er  \to \r$
 associated to the Lagrangian $L$ is
\begin{equation}\label{energy}
E_L=\displaystyle\rho^{\pi }(\Delta )L-L \,.    
\end{equation}
From \eqref{rholksimc} and \eqref{Liouville kcosim} one deduces its local expression
\begin{equation}\label{local enerco}
E_L=\displaystyle y^\alpha\displaystyle\frac{\partial L}{\partial
y^\alpha}- L\;.
\end{equation}

\subsection{Herglotz equations} \

\begin{theorem}\label{algeform co}
Given a regular Lagrangian $L\colon \er \to \r$, since $\eta_L$ is a contact section, there exists a unique section 
$\Gamma_L : \er \to \tec$ of $\widetilde\tau_{E\times \r}$, called the Lagrangian section, satisfying
\begin{equation}\label{ec ge EL co}
 \imath_{\Gamma_L}\eta_L=-E_L\,,\qquad
    \imath_{\Gamma_L}\d\eta _L =\d E_L+
\rho^{\pi}(\Rb_L)(E_L) \, \eta_L \,.
\end{equation}
Moreover,
\begin{enumerate}
    \item $\Gamma_L$ is a {\sc sode}.
    \item If $\widetilde{c} :
 I \subset \r
\to \er \,,\;\widetilde{c}(t)=(  q^i(t),
  y^\alpha (t),  s(t))$ is an integral curve of
$\Gamma_L$, then $\widetilde{c}$ is a solution of the following system of differential equations
\begin{equation}\label{eq E-L contc}
\begin{array}{rcl}   \ds\frac{\d}{ \d
	t     }\left( \ds\frac{\partial  L}{\partial
	y^\alpha }\Big\vert_{\widetilde{c}(t)}\right) &=&
\rho^i_\alpha(q^i(t)) \, \ds\frac{\partial L}{\partial
	q^i}\Big\vert_{\widetilde{c}(t)} -
y^\beta (t) \, \mathcal{C}^\gamma_{\alpha\beta} \, \ds\frac{\partial
	L}{\partial y^\gamma }\Big\vert_{\widetilde{c}(t)} + \displaystyle\frac{\partial L}{\partial y^\alpha
}\Big\vert_{\widetilde{c}(t)}\displaystyle\frac{\partial L}{\partial s
}\Big\vert_{\widetilde{c}(t)} ,
\\\noalign{\medskip}
\ds\frac{\d q^i}{\d t     }\Big\vert_{t} &=&
y^\alpha (t) \, \rho^i_\alpha\;,\qquad \displaystyle\frac{\d s}{\d t}\Big\vert_{ t}=L(\widetilde{c}(t)) ,
 %\\\noalign{\medskip}
%0&=&\ds\frac{\partial \phi^\alpha }{\partial
%t    }\Big\vert_{t} - \ds\frac{\partial
%\phi^\alpha   }{\partial t     }\Big\vert_{t}
%+\mathcal{C}^\alpha_{\beta\gamma}\phi^\beta   (t)\phi^\gamma (t)
\end{array}
\end{equation}
for $i=1,\ldots,n$ and $\alpha=1,\ldots,m$, where $\rho^i_\alpha$ and $\mathcal{C}^\gamma_{\alpha\beta}$ are the structure functions of the Lie algebroid $E$ with respect to the coordinates $(q^i)$ and the local basis $\{e_\alpha\}$.

These equations are the {\bf Herglotz equations} on Lie algebroids.
\end{enumerate}
\end{theorem}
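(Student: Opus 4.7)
The plan is to mimic the classical contact Lagrangian construction on $T Q\times\r$ reviewed in Section \ref{CLS}, replacing $T Q\times\r$ by the prolongation $\tec$ and using the objects developed in Section \ref{lagran prolong}. Since $L$ is regular, $\eta_L$ defines a contact structure on the Lie algebroid $\tec$ by the observation following \eqref{local omega co}, so the $\Cinfty(\er)$-module morphism $\flat_L\colon Sec(\tec)\to Sec((\tec)^*)$ defined by $\flat_L(Z)=\imath_Z\d\eta_L+(\imath_Z\eta_L)\eta_L$ is an isomorphism (the algebroid analogue of the map $\flat$ of Section \ref{sec:contact-Hamiltonian-systems}; its bijectivity follows from the contact condition $\eta_L\wedge(\d\eta_L)^m\neq 0$, which in local coordinates reduces to invertibility of the Hessian $(\partial^2L/\partial y^\alpha\partial y^\beta)$). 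The two scalar conditions in \eqref{ec ge EL co} then combine into the single equation $\flat_L(\Gamma_L)=\d E_L+(\rho^{\pi}(\Rb_L)(E_L)-E_L)\eta_L$, which admits a unique solution $\Gamma_L\in Sec(\tec)$.

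Next I would verify the \textsc{sode} property. Writing $\Gamma_L=A^\alpha\mathcal{X}_\alpha+B^\alpha\mathcal{V}_\alpha+C\mathcal{V}_s$ in the basis \eqref{basec}, the identity \eqref{localtildeSAksim} gives $S(\Gamma_L)=A^\alpha\mathcal{V}_\alpha$, so $S(\Gamma_L)=\Delta$ is equivalent to $A^\alpha=y^\alpha$. Using \eqref{local thetac}--\eqref{local omega co}, the $\mathcal{V}^\beta$-component of $\imath_{\Gamma_L}\d\eta_L$ equals $A^\alpha W_{\alpha\beta}$, where $W_{\alpha\beta}=\partial^2L/\partial y^\alpha\partial y^\beta$; from \eqref{local enerco} the $\mathcal{V}^\beta$-component of $\d E_L$ is $y^\alpha W_{\alpha\beta}$; and $\eta_L$ has no $\mathcal{V}^\beta$-part. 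Matching $\mathcal{V}^\beta$-coefficients in the second equation of \eqref{ec ge EL co} and invoking regularity of $W$ forces $A^\alpha=y^\alpha$, which establishes the \textsc{sode} property. The first equation of \eqref{ec ge EL co} together with \eqref{local enerco} then immediately gives $C=L$, so from \eqref{integral sect} one recovers $\d q^i/\d t=\rho^i_\alpha y^\alpha$ and $\d s/\d t=L$, which are the last two lines of \eqref{eq E-L contc}.

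The first line of \eqref{eq E-L contc} is obtained by matching the $\mathcal{X}^\beta$-components of the second equation of \eqref{ec ge EL co}, producing a linear expression for $B^\alpha$ in terms of $\partial L/\partial q^i$, $\partial L/\partial y^\alpha$, $\partial L/\partial s$, and the structure functions $\rho^i_\alpha,\mathcal{C}^\gamma_{\alpha\beta}$; substituting $B^\alpha=\d y^\alpha/\d t$ and contracting the resulting identity with $W_{\alpha\beta}$ produces the generalised Euler--Lagrange equation in the statement. The main technical obstacle is precisely this $\mathcal{X}^\beta$-component bookkeeping: the antisymmetric coefficient $\rho^i_\beta\partial^2L/\partial q^i\partial y^\alpha+\tfrac12\mathcal{C}^\gamma_{\alpha\beta}\partial L/\partial y^\gamma$ in $\d\eta_L$ is what produces the term $-y^\beta\mathcal{C}^\gamma_{\alpha\beta}(\partial L/\partial y^\gamma)$ in \eqref{eq E-L contc}, whereas the dissipative contribution $(\partial L/\partial y^\alpha)(\partial L/\partial s)$ emerges from combining the $\mathcal{X}^\alpha\wedge\mathcal{V}^s$ piece of $\d\eta_L$ with the Reeb term $\rho^{\pi}(\Rb_L)(E_L)\,\eta_L$, using the explicit formula \eqref{ReebLloc}. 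Once these contributions are correctly combined and the anchor relations \eqref{roialfa} are applied, the Herglotz equations \eqref{eq E-L contc} drop out.
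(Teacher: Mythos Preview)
Your proposal is correct and follows essentially the same route as the paper: write $\Gamma_L=A^\alpha\mathcal{X}_\alpha+B^\alpha\mathcal{V}_\alpha+C\mathcal{V}_s$, match the $\mathcal{V}^\beta$-components of \eqref{ec ge EL co} to obtain $A^\alpha=y^\alpha$ via regularity, use $\imath_{\Gamma_L}\eta_L=-E_L$ to get $C=L$, and extract the Herglotz equations from the $\mathcal{X}^\alpha$-components after substituting the integral-curve relations. The only notable difference is that you phrase existence and uniqueness abstractly via the isomorphism $\flat_L$, whereas the paper leaves this implicit in the contact hypothesis and proceeds directly with the coordinate computation. One small imprecision: the $\mathcal{X}^\alpha$-component of the second equation in \eqref{ec ge EL co} already contains the Hessian in the form $B^\beta W_{\beta\alpha}$, so no further ``contraction with $W_{\alpha\beta}$'' is required; substituting $\dot q^i=\rho^i_\beta y^\beta$, $\dot y^\beta=B^\beta$, $\dot s=L$ and applying the chain rule assembles the left-hand side into $\tfrac{\d}{\d t}(\partial L/\partial y^\alpha)$ directly.
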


\proof 
As
 $\Gamma_L \in Sec(\tec)$ can be locally written as
\begin{equation}\label{GammaLproof}
\Gamma_L =A^\alpha \mathcal{X}_\alpha+ B^\alpha \mathcal{V}_\alpha+ C  \mathcal{V}_s ,   
\end{equation} 
for some functions $A^\alpha ,B^\alpha,C   \in
\mathcal{C}^\infty(\er )$. 
Now, from \eqref{localtildeSAksim} and \eqref{GammaLproof} we obtain
$$\eta_L(\Gamma_L)= \mathcal{V}^s(\Gamma_L)
-\displaystyle\frac{\partial L}{\partial y^\alpha
}\mathcal{X}^\alpha(\Gamma_L)=C -\displaystyle\frac{\partial L}{\partial y^\alpha
} A^\alpha=-E_L=L-\displaystyle y^\alpha\displaystyle\frac{\partial L}{\partial
y^\alpha} .
$$

On the other hand, from the local expression
\eqref{local omega co}, a straightforward computation in local coordinates shows that
$$\begin{array}{ll}
  \imath_{\Gamma_L}\d\eta_L      = & A^\alpha \,
\derpars{L}{s    }{y^\alpha }\,\mathcal{V}^s   +
A^\alpha \derpars{L}{y^\beta   }{y^\alpha } \mathcal{V}^\beta   
\\\noalign{\medskip} & -
\left(C\derpars{L}{s }{y^\alpha } +
A^\beta \left(\rho^i_\beta\derpars{L}{q^i}{y^\alpha } -
\rho^i_\alpha\derpars{L}{q^i}{y^\beta }+\mathcal{C}^\gamma_{\alpha\beta}
\derpar{L}{y^\gamma }\right)+B^\beta   \derpars{L}{y^\beta   }{y^\alpha }\right)\mathcal{X}^\alpha\;,
\end{array}$$ 
and from the local expressions \eqref{difksimc} and \eqref{local enerco}, we obtain
$$\d E_L + \derpar{L}{s     }\eta_L  =
\left(y^\alpha \derpars{L}{s    }{y^\alpha } +\displaystyle\frac{\partial L}{\partial s
} \right)\mathcal{V}^s    +
y^\alpha \derpars{L}{y^\alpha }{y^\beta   } \mathcal{V}^\beta    +
 \left[ \rho^i_\alpha\left(y^\beta \derpars{L}{q^i}{y^\beta }
- \displaystyle\frac{\partial L}{\partial q^i
}  \right)  -\displaystyle\frac{\partial L}{\partial y^\alpha
}\displaystyle\frac{\partial L}{\partial s
}\right]\mathcal{X}^\alpha  
\,.
$$
Whence it follows that $\Gamma_L:\er \to\tec$ is a solution of the system \eqref{ec ge EL co} if, and only if,
\begin{equation}\label{imp ec el}\begin{array}{c}\begin{array}{rcl}
 A^\alpha \, \derpars{L}{s    }{y^\alpha } &=&
y^\alpha \derpars{L}{s   }{y^\alpha }+\derpar{L}{s }\;,
\\\noalign{\medskip}
A^\alpha \derpars{L}{y^\beta   }{y^\alpha }& =&
y^\alpha \derpars{L}{y^\alpha }{y^\beta   }\;,\end{array}\\\noalign{\medskip}
\begin{array}{ll} & -\left[C\derpars{L}{s    }{y^\alpha } +
A^\beta \left(\rho^i_\beta\derpars{L}{q^i}{y^\alpha }-
\rho^i_\alpha\derpars{L}{q^i}{y^\beta } +
\mathcal{C}^\gamma_{\alpha\beta}\derpar{L}{y^\gamma }\right)+
B^\beta   \derpars{L}{y^\beta   }{y^\alpha } \right]
\\\noalign{\medskip}=&\rho^i_\alpha\left(y^\beta \derpars{L}{q^i}{y^\beta }
- \displaystyle\frac{\partial L}{\partial q^i
}  \right)  -\displaystyle\frac{\partial L}{\partial y^\alpha
}\displaystyle\frac{\partial L}{\partial s
} .
\end{array}\end{array}\end{equation}

Since $L$ is regular, from the second identity of \eqref{imp ec el}, we obtain
$$
A^\alpha =y^\alpha \;,\quad \alpha=1,\ldots,m\,.$$
Therefore $\Gamma_L$ is a {\sc sode}, and from
$$ C -\displaystyle\frac{\partial L}{\partial y^\alpha
} A^\alpha= L-\displaystyle y^\alpha\displaystyle\frac{\partial L}{\partial
y^\alpha} 
$$
we conclude $C=L$. Now, from the last identity on (\ref{imp ec el}) we obtain
%\begin{equation}\label{imp ec el0}\begin{array}{ll} &L\derpars{L}{s    }{y^\alpha } +
%y^\beta \left(\rho^i_\beta\derpars{L}{q^i}{y^\alpha }-
%\rho^i_\alpha\derpars{L}{q^i}{y^\beta } +
%\mathcal{C}^\gamma_{\alpha\beta}\derpar{L}{y^\gamma }\right)+
%B^\beta   \derpars{L}{y^\beta   }{y^\alpha } 
%\\\noalign{\medskip}=&- \rho^i_\alpha\left(y^\beta \derpars{L}{q^i}{y^\beta }
%- \displaystyle\frac{\partial L}{\partial q^i
%}  \right)  + \displaystyle\frac{\partial L}{\partial y^\alpha
%}\displaystyle\frac{\partial L}{\partial s
%}
%\end{array}
%\end{equation}
 \begin{equation}\label{imp ec el1} 
 L \, \derpars{L}{s    }{y^\alpha } +
y^\beta \left(\rho^i_\beta \, \derpars{L}{q^i}{y^\alpha }
+
\mathcal{C}^\gamma_{\alpha\beta} \, \derpar{L}{y^\gamma }\right)+
B^\beta  \, \derpars{L}{y^\beta   }{y^\alpha } 
= \rho^i_\alpha \, \displaystyle\frac{\partial L}{\partial q^i
}    + \displaystyle\frac{\partial L}{\partial y^\alpha
}\displaystyle\frac{\partial L}{\partial s
} .
\end{equation}

In summary, if a section $\Gamma_L$ is a solution of (\ref{ec ge EL co}), then
 $\Gamma_L$ is a {\sc sode} in $\tec$ and it can be  written locally as follows:
$\Gamma_L = y ^\alpha\,
\mathcal{X}_\alpha +  B^\alpha   \, \mathcal{V}_\alpha  +L\, \mathcal{V}_s \,,$ 
for some functions $B^\alpha   \in \mathcal{C}^\infty(\er )$
satisfying
\begin{equation}\label{eqla3}y^\beta \, \rho^i_\beta \, \derpars{L}{q^i}{y^\alpha }+
B^\beta   \, \derpars{L}{y^\beta   }{y^\alpha } + L \, \derpars{L}{s     }{y^\alpha } =
\rho^i_\alpha \, \derpar{L}{q^i} - y^\beta  \,
\mathcal{C}^\gamma_{\alpha\beta} \, \derpar{L}{y^\gamma }
  + \displaystyle\frac{\partial L}{\partial y^\alpha
}\displaystyle\frac{\partial L}{\partial s
}
\,.\end{equation}

Now, let $\widetilde{c} : I \subset \r \to E \times \r$, $\widetilde{c}( t)=(q^i(t),y^\alpha(t),s(t))$ be an integral curve of the {\sc sode} $\Gamma_L$, that is, an integral curve of the vector field $\rho^\pi(\Gamma_L)$, say
$$\rho^\pi(\Gamma_L)(\widetilde{c}(t)) = \widetilde{c}_* (t) \left(\ds\frac{d}{d t}\Big\vert_{t}\right).$$
From  \eqref{ sode asso co} we deduce that \eqref{int sect co} is locally equivalent to the identities
\begin{equation}\label{integral sect2}
  \displaystyle\frac{\d
q^i}{\d t}\Big\vert_{ t}=
 \rho^i_\alpha(q^i(t))y^\alpha(t)\;,\quad \displaystyle\frac{ \d y^\alpha}{\d t}\Big\vert_{ t}=B^\alpha(\widetilde{c}( t))\;. \quad \displaystyle\frac{\d s}{\d t}\Big\vert_{ t}=L( \widetilde{c}(t)) \;.
\end{equation} 
If we restrict equations \eqref{eqla3} to the image of $\widetilde{c}(t)$ and consider the above identities \eqref{integral sect2}, we obtain
$$
\ds\frac{\d q^i}{\d t}\,\derpars{L}{q^i}{y^\alpha } +
 \displaystyle\frac{ \d y^\beta}{\d t}\,   \derpars{L}{y^\beta}{y^\alpha }  + \ds\frac{\d s}{\d t}\,  
 \derpars{L}{s     }{y^\alpha } =
\rho^i_\alpha \,\derpar{L}{q^i} - y^\beta\,
\mathcal{C}^\gamma_{\alpha\beta}\derpar{L}{y^\gamma }
  +\displaystyle\frac{\partial L}{\partial y^\alpha} \,\displaystyle\frac{\partial L}{\partial s}  ,
$$
or equivalently
$$\begin{array}{rcl}   \ds\frac{\d}{ \d
	t     }\left( \ds\frac{\partial  L}{\partial
	y^\alpha }\Big\vert_{\widetilde{c}(t)}\right) &=&
\rho^i_\alpha \, \ds\frac{\partial L}{\partial
	q^i}\Big\vert_{\widetilde{c}(t)} -
y^\beta (t) \, \mathcal{C}^\gamma_{\alpha\beta} \, \ds\frac{\partial
	L}{\partial y^\gamma }\Big\vert_{\widetilde{c}(t)} + \displaystyle\frac{\partial L}{\partial y^\alpha
}\Big\vert_{\widetilde{c}(t)}\displaystyle\frac{\partial L}{\partial s
}\Big\vert_{\widetilde{c}(t)} ,
\\\noalign{\medskip}
\ds\frac{\d q^i}{\d t     }\Big\vert_{t} &=&
y^\alpha (t) \, \rho^i_\alpha\;,\quad \displaystyle\frac{\d s}{\d t}\Big\vert_{ t}=L(\widetilde{c}(t)) ,
\end{array}
$$  
which are the Herglotz equations on Lie algebroids.
\qed

\begin{remark}
If $E$ is the standard Lie algebroid $T Q$, then $\Theta_L$ and $\eta_L$ are the usual Poincaré--Cartan $1$-form and the contact $1$-form  respectively, associated with the Lagrangian function $L\colon  T Q  \times \r \to \r$ considered in Section \ref{CLS}. The equations of motion are the Herglotz equations given in Section \ref{CLS}.
\end{remark}

\begin{example}{\rm 
    If $E = \mathfrak{g}$ is the Lie algebra of a Lie group $G$ projecting over $Q=\{0\}$, let us consider coordinates $(y^{A})$ on $\al$ associated with the Lie algebra basis $\{e_{A}\}$. Then we obtain Euler-Poincar\'e-Herglotz equations (see \cite{ACLMMP-2023}) \begin{equation*}\
		\begin{split}
			& \frac{\d}{\d t}\frac{\partial L}{\partial y^{A}}+C_{AB}^{D}y^{B}\frac{\partial L}{\partial y^{D}}=\frac{\partial L}{\partial s}\frac{\partial L}{\partial y^{A}}, \\
			& \frac{\d s}{\d t}=L(y^{A},s),
		\end{split}		
	\end{equation*}
    for the Lagrangian  $L:\mathfrak{g}\times \R \rightarrow \R$ and structure constants $C_{AB}^{D}$.
}\end{example}

\begin{example}\label{Atiyah:eq}{\rm
%Following with Example \ref{Atiyah case}, we will now describe the Lie bracket of sections of the Lie algebroid $\widehat{TQ}\rightarrow \widehat{Q}$.
Let $\mathcal{A}:TQ\to\al$ be a principal connection in the principal bundle $\pi:Q\to Q/G$ and $\mathcal{B}:TQ\oplus TQ\to\al$ be the curvature of $\mathcal{A}.$ 

%The connection determines an isomorphism $\alpha_{\mathcal{A}}$ between the vector bundles
%$TQ/G\toQ/G$ and $TQ/G\oplus\widetilde{\al}\to Q/G$, where $\widetilde{\al}=(Q\times\al)/G$ is the adjoint bundle associated with the principal bundle $\pi:Q\toQ/G$ (see \cite{CeMaRa} for example).

%\bcr 
We will use coordinates $(q^i,q^A)$ on a suitable open subset $\pi^{-1}(U)$  (containing $U\times \, \{e\}$, where $e$ is the identity of $G$) such that $(q^i)$ are coordinates on $U$, and $(q^A)$  are coordinates on the fibre $G$, where $i= 1,\ldots,n-d=\mathop{\dim}Q-\mathop{\dim}G$, $A= 1,\ldots, d=\mathop{\dim}G$. 
 Then, the local expression of the projection $\pi: Q \to Q/G$ is $\pi(q^i,q^A) = (q^i)$.
%\begin{equation}\label{TriviCoord}
%	\begin{array}{ccc}
%		\pi^{-1}(U) \simeq U\times G & \longrightarrow & U \\
%		\noalign{\medskip}(q^i,q^A) & \longmapsto & (q^i),
%	\end{array}
%\end{equation}
%\enc

%We choose a local trivialization of the principal bundle
%$\pi:Q\toQ/G$ to be $U\times G,$ where $U$ is an open subset of $Q/G.$ 

%Suppose that $e$ is the identity of $G$, $(x^{i})$ are local coordinates on $U$ and $\{\xi_{A}\}$ is a basis of $\al.$
Suppose that $ \{e_{A}\}$ is a basis of $\al,$ and denote by $\{\widehat{e_{A}}\}$ the fundamental vector fields on $Q$ given by
$$\widehat{e_{A}}(q,g)=(ad_{g} e_A)_Q (q,g) ,$$ where
$ad_{g}: \al \to \al$ is the adjoint action. If
$$\mathcal{A}\left(\frac{\partial}{\partial
q^{i}}\Big{|}_{(q,e)}\right)=\mathcal{A}_{i}^{A}(q)\, e_{A},\quad\mathcal{B}\left(\frac{\partial}{\partial
q^{i}}\Big{|}_{(q,e)},\frac{\partial}{\partial
q^{j}}\Big{|}_{(q,e)}\right)=\mathcal{B}_{ij}^{A}\,(q)e_{A},$$ 
for $i,j= 1,\ldots, n-d$ and $q\in U,$ then the horizontal lift of the
vector field $\frac{\partial}{\partial q^{i}}$ is the vector field
on $\pi^{-1}(U)\simeq U\times G$ given by 
$$e_i = \left(\frac{\partial}{\partial
q^{i}}\right)^{h}=\frac{\partial}{\partial
q^{i}}-\mathcal{A}_{i}^{A} \, \widehat{e_{A}}.$$

Therefore, the vector fields $e_{i}$, $\widehat{e_{A}}$ on $U\times G$ are $G$-invariant under the action
of $G$ over $Q$ and define a local basis  $\{e_{i},\widehat{e_{A}}\}$ on
$Sec(TQ/G)$ which induces local coordinates $(q^{i},\dot{q}^{i},v^{A})$ on $TQ/G$.

Then, we obtain the Lagrange-Poincaré-Herglotz equations (see \cite{ACLSS-2023}) for $L:\widehat{TQ}\times \R\to\R$ given by
        \begin{align}
            \frac{\partial L}{\partial q^{j}}-\frac{\d}{\d t}\left(\frac{\partial L}{\partial \dot{q}^{j}}\right)&=\frac{\partial L}{\partial v^{A}}\left(\mathcal{B}_{ij}^{A}\dot{q}^{i}+c_{DB}^{A}\mathcal{A}_{j}^{B} v^{B}\right) - \frac{\partial L}{\partial s}\frac{\partial L}{\partial \dot{q}^{j}} \quad\forall j,\nonumber\\
            \frac{\d}{\d t}\left(\frac{\partial L}{\partial v^{B}}\right)&=\frac{\partial L}{\partial v^{A}}\left(c_{DB}^{A} v^{D}-c_{DB}^{A}\mathcal{A}_{i}^{D}\dot{q}^{i}\right) + \frac{\partial L}{\partial s}\frac{\partial L}{\partial v^{B}}\quad\forall B,\\\nonumber
            \frac{\d s}{\d t} &= L , \nonumber
\end{align}
being $\{c_{AB}^{C}\}$ the constant structures of $\al$ with respect to the basis $\{e_{A}\}$ (see \cite{LMM-2005} for more details).

}
\end{example}

\section{Contact Hamiltonian formalism on Lie algebroids}\label{Ha}

In this section, we extend the standard Hamiltonian  contact formalism to Lie algebroids. Let   $(E,\lcf\cdot,\cdot\rcf,\rho)$ be a Lie algebroid of rank $m$ over a manifold $Q$ of dimension $n$ and
$\tau^{\;*}:E^{\;*}\to Q$ be the vector bundle projection of the dual bundle $E^*$ of $E$.

\subsection{The contact Hamiltonian prolongation}\label{Ham prol}

 The standard contact Hamiltonian formalism is developed on the bundle  $T^*Q\times \r$. For this generalization  to Lie algebroids,  it is natural to consider the projection map
$\pi  \colon E^*\times \r\to Q$ given by $\pi  (b_q^* ,s)= {q}$, being now $P= \ers$ and $(b_q^* ,s)$ an element of $E^* \times \r$.

Let $(q^i)$ be local coordinates on a neighborhood $U$ of $Q$, $i=1,\ldots,n$, and
$\{e^\alpha\}$ be a local basis of sections of $\tau^*:E^*\to Q$, $\alpha=1,\ldots,m$. Given ${b^*_q}\in E_q^*$, we can write
$b^*_q=y_\alpha(b^*_q)e^\alpha( {q})\in E^*_ {q} $, so the
coordinates of $b^*_q\in E^*$ are $(q^i({q}),y_\alpha({b^*_q}))$
and each
section $\sigma$ is  given locally by $\sigma\big\vert_{U}=y_\alpha
e^\alpha$.
Then the local coordinates on
$\pi^{-1}(U)\subseteq E^*\times \r$ are given by
\begin{equation}\label{cke}
q^i(b_q^* ,s)=q^i({q}),\quad
y_\alpha (b_q^* ,s)=y_\alpha(b_q^*),\quad s(b_q^* ,s)=s. \end{equation}
 
Consider now the prolongation of $E$ over the fibration $\pi \colon E^*\times \r \to Q$
\begin{equation}\label{tech}\tech=\{(a_{q},v_{(b_q^* ,s)})\in   E\times  T( E^*\times \r) /\;
\rho(a_{q})=T \pi (v_{(b_q^* ,s)})\}\,.
\end{equation}
By \eqref{vicoord}, we know that
$(a_q,v_{(b_q^*,s)})\in \tech $
if, and only if,
$$
v_{(b_q^*,s)}=y^\alpha(a_q) \,\rho^i_\alpha(q)\derpar{}{q^i}+  \dot{y}_\alpha \derpar{}{y_\alpha}       +  \dot{s} \derpar{}{s}  ,   
$$
being $(q^i,y^\alpha,s,z_\alpha,\dot y_\alpha, \dot s )$ the induced local coordinates on $\tech$, where
\begin{equation}\label{local coord cprol h}\begin{array}{lcllcl}
 q^i(a_{q},v_{(b_q^* ,s)}) &=& q^i({q})
  \;,\quad & z_\alpha (a_{q},v_{(b_q^* ,s)})&=&y_\alpha (b_q^*) \;,\\\noalign{\medskip}
y^\alpha(a_{q},v_{(b_q^* ,s)})&=&y^\alpha(a_{q})
  \;,\quad  & \dot y_\alpha(a_{q},v_{(b_q^* ,s)})
  &=&v_{(b_q^* ,s) }(y_\alpha )
 \;,\\\noalign{\medskip}
 s(a_{q},v_{(b_q^* ,s)}) &=& s\;,
\quad &  \dot s(a_{q},v_{(b_q^* ,s)})&=&v_{(b_q^* ,s)}(s)\;. \\
\end{array}
\end{equation}

 From Section \ref{Sec 5.2.}, we deduce the following properties of $\tech$.
\begin{enumerate}
\item The vector bundle $\tech$ with projection $
\widetilde{\tau}_{E^*\times \r}\colon  \tech \to E^*\times \r$ given by $\widetilde{\tau}_{E^*\times \r}(a_{q},v_{(b_q^* ,s)})=(b_q^* ,s)
$ has a Lie algebroid structure
$(\lcf\cdot,\cdot\rcf^{*\pi },\rho^{*\pi }\,)$,
where the anchor map
$\rho^{*\pi }\colon  \tech\to
T( E^*\times \r)$ given by $ \rho^{*\pi }( (a_{q},v_{(b_q^* ,s)}))=v_{(b_q^* ,s)} $ is the canonical projection on the second factor. We refer to this Lie algebroid as the {\it contact Hamiltonian prolongation}.

The following diagram shows the different projections defined from $\tech$
$$\xymatrix@=5mm{ \mathcal{T}^E(E^*\times \r)\ar[d]_-{   \tau_1}\ar[r]^-{\rho^{*\pi}}\ar@/^{10mm}/[rr]^-{   \widetilde\tau_{E^*\times \r}}
	& T( E^*\times \r)\ar[r]^-{\tau_{E^*\times \r}}\ar[d]^-{T\pi} & E^* \times \r\ar[d]^-{\pi}
	\\
	E\ar[r]^-{\rho} & T Q\ar[r]^-{\tau_Q} & Q }$$
where
\begin{equation}\label{projection prol}
	\hspace{-0.4cm}\begin{array}{lclclclclcl}
		\tau_1(a_{q},v_{(b_q^* ,s)})&=&a_q &, &
		\rho^{*\pi}(a_{q},v_{(b_q^* ,s)})&=&v_{(b_q^* ,s)}&, &
		\widetilde{\tau}_{E^*\times \r}(a_q,v_{(b_q^*,s)})&=& (b_q^*,s).
\end{array}\end{equation}

\item  The set  $\widetilde{\mathcal{X}}_\alpha,\widetilde{\mathcal{V}}_\alpha,\widetilde{\mathcal{V}}_s\colon  E^*\times \r\to\mathcal{T}^E(E^*\times \r)$
given by
{\small\begin{equation}\label{basehc} \begin{array}{c}
\widetilde{\mathcal{X}}_\alpha(b_q^* ,s) =
\left(e_\alpha({q}), \rho^i_\alpha({q})\displaystyle\frac{\partial
}{\partial q^i}\Big\vert_{(b_q^* ,s)}\right)\, ,  \,\,
%\\\noalign{\bigskip}
\widetilde{\mathcal{V}}_\alpha(b_q^* ,s)
=\left(0_{q}, \displaystyle\frac{\partial}{\partial
y_\alpha }\Big\vert_{(b_q^* ,s)}\right) \, ,  \,\, \widetilde{\mathcal{V}}_s(b_q^* ,s)=
\left(0_{q}, \displaystyle\frac{\partial}{\partial
s}\Big\vert_{(b_q^* ,s)}\right)
\end{array}\end{equation}} 
is a local basis of  $Sec(\tech)$, the set of sections of  $\widetilde{\tau}_{E^*\times \r}$ (see \eqref{base k-prol}).

\item The anchor map $\rho^{*\pi }\colon  \tech\to
T( E^*\times \r)$ allows us to associate a vector field with each section $\xi\colon E^*\times \r\to\tech$ of $\widetilde{\tau}_{E^*\times \r}$. Locally, if $\xi$ is given by
$$\xi=\xi_2^\alpha\widetilde{\mathcal{X}}_\alpha+\xi_1^\alpha  \widetilde{\mathcal{V}}_\alpha + \xi_0\widetilde{\mathcal{V}}_s \in Sec(\tech),$$
then the associate vector field is
\begin{equation}
\label{rholksimhc}
\rho^{*\pi }(\xi)=\rho^i_\alpha
\xi_2^\alpha\derpar{}{q^i} + \xi_1^\alpha\derpar{}{y_\alpha }+ \xi_0\derpar{}{s} \in
\vf(E^*\times \r)\,.\end{equation}

\item The Lie bracket of two sections of $\widetilde{\tau}_{E^*\times \r}$ is characterized by the relations (see \eqref{lie brack k-prol}),
\begin{equation}\label{lie brack tech}
\begin{array}{lclclclclcl}
\lcf\widetilde{\mathcal{X}}_\alpha,\widetilde{\mathcal{X}}_\beta\rcf^{*\pi }&=&
\mathcal{C}^\gamma_{\alpha \beta}\widetilde{\mathcal{X}}_\gamma\;,
&\quad &
\lcf\widetilde{\mathcal{X}}_\alpha,\widetilde{\mathcal{V}}_\beta\rcf^{*\pi }&=&0 \;, &\quad &
\lcf\widetilde{\mathcal{X}}_\alpha,\widetilde{\mathcal{V}}_s\rcf^{*\pi }&=&
0\;,
\\\noalign{\medskip}
\lcf\widetilde{\mathcal{V}}_\alpha,\widetilde{\mathcal{V}}_\beta\rcf^{*\pi }&=&0
\;, &\quad &
\lcf\widetilde{\mathcal{V}}_\alpha,\widetilde{\mathcal{V}}_s\rcf^{*\pi }&=&
0 \;,
&\quad &
\lcf\widetilde{\mathcal{V}}_s,\widetilde{\mathcal{V}}_s\rcf^{*\pi }&=& 0 \,.
\end{array}
\end{equation}

\bigskip

\item If $\{\widetilde{\mathcal{X}}^\alpha,
\widetilde{\mathcal{V}}^\alpha,\widetilde{\mathcal{V}}^s\}$ is the dual basis of
$\{\widetilde{\mathcal{X}}_\alpha,\widetilde{\mathcal{V}}_\alpha,
\widetilde{\mathcal{V}}_s\}$, then the exterior differential is given by
\begin{equation}\label{ext dif tech}
\begin{array}{l}
\d^{\tech}f= \rho^i_\alpha\derpar{f}{q^i}\widetilde{\mathcal{X}}^\alpha
+ \derpar{f}{y_\alpha }\widetilde{\mathcal{V}}^\alpha + \derpar{f}{s  }\widetilde{\mathcal{V}}^s \,,\quad \makebox{ for
all }  \; f \in \mathcal{C}^\infty(E^*\times \r)\\\noalign{\medskip}
\d^{\tech}\widetilde{\mathcal{X}}^\gamma =
-\displaystyle\frac{1}{2}\mathcal{C}^\gamma_{\alpha\beta}\widetilde{\mathcal{X}}^\alpha\wedge\widetilde{\mathcal{X}}^\beta
\quad , \quad
\d^{\tech}\widetilde{\mathcal{V}}^\gamma   =0 \quad , \quad \d^{\tech}\widetilde{\mathcal{V}}^s   =0\,.
\end{array}
\end{equation}
From now on we are going to set the notation $\d = \d^{\tech}$.
\end{enumerate}

\begin{remark}\label{remark equiv h}
Note that in the particular case $E= T Q$, the manifold $\mathcal{T}^E(E^*\times \r)$ reduces to $T(T^*Q\times \r)$. 
\end{remark}

\subsection{Hamiltonian formalism}\

The Liouville $1$-section $\Theta : \ers \to (\mathcal{T}^E(E^*\times \r))^{\,*}$ is defined by
%\begin{equation}\label{theta A kc}
 %\begin{array}{rccl}
 %\Theta(b_q^*,s)  \colon & \mathcal{T}^E(E^*\times \r)   & \longrightarrow &(\mathcal{T}^E(E^*\times \r))_{(b_q^*,s)}
 %\\ \noalign{\medskip}
 % & ( a_{q}, v_{(b_q^*,s) }) & \longmapsto & \Theta(b_q^*,s)( a_{q}, v_{(b_q^*,s) })=b_q^*(a_{q}) .
  %  \end{array}              
%\end{equation}
%If $(b_q^*,s) \in E^*\times \r$ and $( a_{q}, v_{(b_q^*,s) })$ is a point of the fibre $(\mathcal{T}^E(E^*\times \r) )_{(b_q^*,s) }$ then
 \begin{equation}\label{theta A kc}
 \Theta_{(b_q^*,s) }( a_{q}, v_{(b_q^*,s) })=b_q^*(a_{q})\;,
\end{equation}
for each $ a_{q}\in E,\,(b_q^*,s)  \in E^*\times \r$ and $ v_{(b_q^*,s) }\in T_{(b_q^*,s) }(E^*\times \r)$. 

Now, we define the following $1$-section 
$\eta$ on $(\mathcal{T}^E(E^*\times \r))^{*}$  as
\begin{equation}\label{contsec}
 \eta = \widetilde{\mathcal{V}}^s - \Theta ,   
\end{equation}
and its differential
$\d \eta:E^*\times \r \to
\Lambda^2(\mathcal{T}^E(E^*\times \r))^{*}   $
satisfies
$\d \eta  = - \d\Theta $.

From \eqref{basehc} we deduce that the local expressions of $\Theta$ and $\eta$ are
\begin{equation}\label{theta*kcsim}
\Theta=y  _\alpha\widetilde{\mathcal{X}}^\alpha \;,\qquad \eta = \widetilde{\mathcal{V}}^s - y_\alpha \widetilde{\mathcal{X}}^\alpha ,
\end{equation}
and from the local expressions \eqref{ext dif tech} and \eqref{theta*kcsim}, we obtain
\begin{equation}\label{local deta}
\d \eta =  \displaystyle\frac{1}{2} \,
\mathcal{C}^\gamma_{\alpha\beta} \,
	y_\gamma \, \widetilde{\mathcal{X}}^\alpha \wedge \widetilde{\mathcal{X}}^\beta
+ \widetilde{\mathcal{X}}^\gamma \wedge \widetilde{\mathcal{V}}^\gamma \,.
\end{equation}
%Let us observe that
%$\dim E=n+m$ and $\dim \tech=(n+m+1)+2m+1$, then we have that
%$\eta \wedge (\d \eta)^m \neq 0$, thus
%we deduce the following:
\begin{remark}\label{prolongcontalg}
From \eqref{theta*kcsim} and \eqref{local deta} we deduce that  $\eta$ defines a contact structure of the Lie algebroid $(\tech,\lcf\cdot,\cdot\rcf^{*\pi },\rho^{*\pi }\,)$ in the sense of Definition \ref{defcontLiealg}. Moreover, the Reeb section $\Rb$ for this contact Lie algebroid, characterized by \eqref{Reebsec-condition}, is locally given by
$\Rb = \widetilde{\mathcal{V}}_s$.
\end{remark}

\begin{remark}
When $E=T Q$ and $\rho= id_{T Q}$, $\eta$ is the canonical contact structure \eqref{caneta}.
\end{remark}

\paragraph{\bf The contact Hamilton equations.}\label{Sec 8.2.2.}\

\begin{theorem}
Let $H:E^*\times \r\to \r$ be a Hamiltonian function.
Then, since $\eta$ is a contact section of $(\tech,\lcf\cdot,\cdot\rcf^{*\pi },\rho^{*\pi }\,)$, there exists a unique section 
$\xi_H :E^* \times \r \to
\mathcal{T}^E(E^*\times \r)$ of
$\widetilde{\tau}_{E^*\times \r}$, called the Hamiltonian section, satisfying
\begin{equation}\label{geometric ec H al}
\imath_{\xi_H} \eta =-H\quad,\quad
\displaystyle \imath_{\xi_H}\d \eta   =\d H-
\rho^{*\pi }(\Rb)(H) \eta \,.
\end{equation}
%where $\d$ denotes the exterior differential of the Lie algebroid $\mathcal{T}^E(E^*\times \r)$.
Moreover, if
$\widetilde{c}:\r\to E^*\times \r,\,\, \widetilde{c}(t) =
(   c^i( t),c_\alpha( t ),  c_s(t))$  is an integral curve of $\xi_H$, then $\widetilde{c}$ is a solution of the following system of differential equations
\begin{equation} \label{Hamilton eq}
\begin{array}{rcl}\displaystyle\frac{\d c^{i}}{\d t}\Big\vert_{ t }
&=&\rho^i_\alpha\,\displaystyle\frac{\partial H}{\partial
y_\alpha}\Big\vert_{\widetilde{c}( t )}\;,
\\\noalign{\medskip}
 \displaystyle\frac{\d c_\alpha}{\d t  }\Big\vert_{ t } &=& -
\Big(
 \rho^i_\alpha\,\displaystyle\frac{\partial H}{\partial
q^i}\Big\vert_{\widetilde{c}( t )}+ \mathcal{C}^\gamma_{\alpha\beta}\,
c_\gamma\,
\displaystyle\frac{\partial H}{\partial
y  _\beta}\Big\vert_{\widetilde{c}( t )} + c_\alpha \, \ds\frac{\partial H}{\partial s}\Big\vert_{\widetilde{c}( t )}\Big)\,,
\\\noalign{\medskip}
\displaystyle\frac{\d   c_s}{\d t  }\Big\vert_{ t } &=& c_\alpha\, \ds\frac{\partial H}{\partial y_\alpha}\Big\vert_{\widetilde{c}( t )} - H(\widetilde{c}(t))\,.
\end{array}
\end{equation}
These equations are called the {\bf contact Hamilton equations} on Lie algebroids.
\end{theorem}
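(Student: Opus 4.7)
\proof[Proof sketch]
The plan is to mimic, mutatis mutandis, the scheme that produced Theorem \ref{algeform co} in the Lagrangian setting. Existence and uniqueness of $\xi_H$ is the easy half: by Remark \ref{prolongcontalg}, $\eta$ is a contact $1$-section of $(\tech,\lcf\cdot,\cdot\rcf^{*\pi},\rho^{*\pi})$, so the bundle map $\flat\colon\tech\to(\tech)^{*}$, $X\mapsto \imath_X\d\eta+(\imath_X\eta)\eta$, is a $C^\infty(E^{*}\times\r)$-module isomorphism, and $\xi_H$ is uniquely characterised as $\flat^{-1}\bigl(\d H-(\rho^{*\pi}(\Rb)(H)+H)\eta\bigr)$. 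Since $\Rb=\widetilde{\mathcal{V}}_s$, we have $\rho^{*\pi}(\Rb)(H)=\partial H/\partial s$, which is what will appear in the computations below.

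To obtain the explicit local form, I would expand $\xi_H=A^{\alpha}\widetilde{\mathcal{X}}_\alpha+B^{\alpha}\widetilde{\mathcal{V}}_\alpha+C\,\widetilde{\mathcal{V}}_s$ for undetermined smooth functions $A^{\alpha},B^{\alpha},C$ on $E^{*}\times\r$, and then compute both sides of \eqref{geometric ec H al}. From \eqref{theta*kcsim} one gets immediately
\[
\imath_{\xi_H}\eta=C-y_\alpha A^{\alpha}=-H,
\]
and from \eqref{local deta} a direct contraction yields
\[
\imath_{\xi_H}\d\eta
=\bigl(\mathcal{C}^{\gamma}_{\alpha\beta}y_\gamma A^{\alpha}-B^{\beta}\bigr)\widetilde{\mathcal{X}}^{\beta}
+A^{\gamma}\widetilde{\mathcal{V}}^{\gamma}.
\]
On the right-hand side of the second equation in \eqref{geometric ec H al}, formulae \eqref{ext dif tech} and \eqref{theta*kcsim} give
\[
\d H-\tfrac{\partial H}{\partial s}\eta
=\Bigl(\rho^{i}_{\alpha}\tfrac{\partial H}{\partial q^{i}}+y_\alpha\tfrac{\partial H}{\partial s}\Bigr)\widetilde{\mathcal{X}}^{\alpha}
+\tfrac{\partial H}{\partial y_\alpha}\widetilde{\mathcal{V}}^{\alpha},
\]
since the $\widetilde{\mathcal{V}}^{s}$-components cancel. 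Matching the $\widetilde{\mathcal{V}}^{\gamma}$-coefficients gives $A^{\gamma}=\partial H/\partial y_{\gamma}$; matching the $\widetilde{\mathcal{X}}^{\beta}$-coefficients, together with the antisymmetry $\mathcal{C}^{\gamma}_{\alpha\beta}=-\mathcal{C}^{\gamma}_{\beta\alpha}$, gives
\[
B^{\beta}=-\rho^{i}_{\beta}\tfrac{\partial H}{\partial q^{i}}-\mathcal{C}^{\gamma}_{\beta\alpha}y_\gamma\tfrac{\partial H}{\partial y_\alpha}-y_\beta\tfrac{\partial H}{\partial s};
\]
and the first equation then forces $C=y_\alpha\,\partial H/\partial y_\alpha-H$.

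Finally, to derive the contact Hamilton equations I would translate this section into a vector field on $E^{*}\times\r$ via the anchor: by \eqref{rholksimhc},
\[
\rho^{*\pi}(\xi_H)=\rho^{i}_{\alpha}A^{\alpha}\tfrac{\partial}{\partial q^{i}}+B^{\alpha}\tfrac{\partial}{\partial y_\alpha}+C\tfrac{\partial}{\partial s},
\]
and an integral curve $\widetilde{c}(t)=(c^{i}(t),c_\alpha(t),c_s(t))$ of $\xi_H$ is, by definition, an integral curve of this vector field. Substituting the expressions for $A^{\alpha},B^{\alpha},C$ obtained above yields exactly \eqref{Hamilton eq}. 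The only place where care is really required is the index bookkeeping around the structure constants $\mathcal{C}^{\gamma}_{\alpha\beta}$, so I expect this sign/index juggling to be the main (minor) obstacle; the rest is linear algebra enforced by the non-degeneracy of $\eta\wedge(\d\eta)^{m}$.
\qed
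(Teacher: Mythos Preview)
Your proposal is correct and follows essentially the same approach as the paper's proof, which simply says ``proceeding in the same way as in the Lagrangian case'' and records the resulting local expression \eqref{explocxi}. In fact you have written out more of the coefficient-matching than the paper does; your computations of $\imath_{\xi_H}\d\eta$, of $\d H-\tfrac{\partial H}{\partial s}\eta$, and of $A^{\alpha},B^{\alpha},C$ reproduce \eqref{explocxi} exactly, and the passage to \eqref{Hamilton eq} via $\rho^{*\pi}$ is the same as in the paper.
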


\proof
Proceeding in the same way as in the Lagrangian case (see Theorem \ref{algeform co}), we obtain from \eqref{ext dif tech}, \eqref{theta*kcsim}, \eqref{local deta}  and \eqref{geometric ec H al}, the local expression of
$\xi_H$ 
\begin{equation}\label{explocxi}
\xi_H = \displaystyle\frac{\partial H}{\partial y  _\alpha}
\widetilde{\mathcal{X}}_\alpha - \left(
\rho^i_\alpha\,\displaystyle\frac{\partial H}{\partial q^i}+
\mathcal{C}^\gamma_{\alpha\beta}\,y_\gamma\, \displaystyle\frac{\partial
H}{\partial y  _\beta} + y_\alpha \, \ds\frac{\partial H}{\partial s}\right) \widetilde{\mathcal{V}}_\alpha+ \left(y_\alpha\, \ds\frac{\partial H}{\partial y_\alpha} - H \right) \widetilde{\mathcal{V}}_s .
\end{equation}
Then, an integral curve $\widetilde{c}(t)$ of $\xi_H$, that is, an integral curve of the vector field $\rho^{*\pi}(\xi_H)$, is a solution of \eqref{Hamilton eq}.
\qed

\begin{remark}
In the particular case $E=T Q$ and $\rho=id_{T Q}$,  equations \eqref{Hamilton eq} are the standard contact Hamilton equations \eqref{eq:contact-hamiltonian-equations-darboux-coordinates}.
\end{remark}

In addition to the Hamiltonian section $\xi_H$ associated to a Hamiltonian function $H:E^*\times \r\to \r$, there is another relevant section, called the \textit{evolution section} $ \E_{v_H} \in Sec(\tech)$ defined by 
$$\E_{v_H} = \xi_H + H \Rb ,$$
so that it reads in local coordinates as follows
\begin{equation}\label{evloc}
\E_{v_H} = \displaystyle\frac{\partial H}{\partial y  _\alpha}
\widetilde{\mathcal{X}}_\alpha - \left(
\rho^i_\alpha\,\displaystyle\frac{\partial H}{\partial q^i}+
\mathcal{C}^\gamma_{\alpha\beta}\,y_\gamma\, \displaystyle\frac{\partial
H}{\partial y  _\beta} + y_\alpha \, \ds\frac{\partial H}{\partial s}\right) \widetilde{\mathcal{V}}_\alpha+ y_\alpha\, \ds\frac{\partial H}{\partial y_\alpha} \widetilde{\mathcal{V}}_s .
\end{equation}
Then, the integral curves of $\E_{v_H}$ satisfy
\begin{equation}
\ds\frac{\d q^{i}}{\d t} = \rho^i_\alpha\,\displaystyle\frac{\partial H}{\partial
	y  _\alpha}\,,\qquad
\ds\frac{\d y_\alpha}{\d t} = -
\left(
\rho^i_\alpha\,\displaystyle\frac{\partial H}{\partial
	q^i}+ \mathcal{C}^\gamma_{\alpha\beta}\,
y_\gamma\,
\displaystyle\frac{\partial H}{\partial
	y  _\beta} + y_\alpha \, \ds\frac{\partial H}{\partial s}\right)\,,\qquad
	\ds\frac{\d s}{\d t} = y_\alpha\, \ds\frac{\partial H}{\partial y_\alpha}\,.
\end{equation}

\begin{remark}
   Recently, we have introduced a Jacobi structure on $E^{*}\times \R$ in order to deduce the contact equations of motion on Lie algebroids (see \cite{ACLSS-2023}). In this framework, we make no use of a contact structure on Lie algebroids but only the Jacobi structure. Let $X_{H}\in \mathfrak{X}(E^{*}\times \R)$ be the Hamiltonian vector field obtained from the Hamiltonian function $H:E^{*}\times \R\rightarrow \R$ using the above mentioned Jacobi structure. Comparing the local expression of the integral curves of both mechanical systems we deduce that $$\rho^{*\pi}\left( \xi_H \right) = X_{H}.$$
\end{remark}

\begin{example}
{\rm 
	When $E=TQ$ is equipped with the Lie brackets and the anchor map is just the identity, then the Jacobi structure is the canonical one in $T^{*}Q\times \R$. In that case, we recover the contact Hamiltonian equations \eqref{eq:contact-hamiltonian-equations-darboux-coordinates}
$$\ds\frac{\d q^i}{\d t}=\frac{\partial H}{\partial p_{i}},\,\qquad \ds\frac{\d p_i}{\d t}=\frac{\partial H}{\partial q^{i}}-p_{i}\frac{\partial H}{\partial s},\,\qquad \ds\frac{\d s}{\d t}=p_{i}\frac{\partial H}{\partial p_{i}}-H.$$ 
}
\end{example}

\begin{example}
{\rm 
	When $E$ is a Lie algebra, say $E=\mathfrak{g}$, considering adapted coordinates $(p_{A}, s)$ to a dual basis of the Lie algebra $\{e^{A}\}$, we find that the Hamiltonian vector field on $\mathfrak{g}^{*}\times \R$ is just
	\begin{equation}
		X_{H}=-\left( C_{A B}^{D} p_{D}\frac{\partial H}{\partial p_{B}}+ p_{A}\frac{\partial H}{\partial s} \right)\frac{\partial}{\partial p_{A}}+\left( p_{A}\frac{\partial H}{\partial p_{A}}-H(p_{A}, s) \right)\frac{\partial}{\partial s}.
	\end{equation} which gives rise to the Lie-Poisson-Jacobi equations (see \cite{ACLMMP-2023})

 $$\ds\frac{\d p_A}{\d t}=-C_{A B}^{D} p_{D}\frac{\partial H}{\partial p_{B}}- p_{A}\frac{\partial H}{\partial s},\,\qquad \ds\frac{\d s}{\d t}=p_{A}\frac{\partial H}{\partial p_{A}}-H(p_{A}, s).$$
 }
\end{example}

\begin{example}
{\rm 
Given a Hamiltonian function $H:T^{*}Q/G \times \R \rightarrow \R$ associated with the Atiyah algebroid $TQ/G\to Q/G$, let $\{e_i, \widehat{e_{A}}\}$ be the local basis of $G$-invariant vector fields on $Q$ given in Example \ref{Atiyah:eq}, and $(q^i,\dot{q}^i,v^A)$ be the corresponding local fibred coordinates on $TQ/G$. Then, denote by $(q^i,p_i,\bar{p}_A)$ the (dual)
coordinates on $T^*Q/G$ and $(q^i,p_i,\bar{p}_A,s)$ the corresponding coordinates on $T^*Q/G \times \R$.

In these coordinates, the contact Hamiltonian equations are given by the Hamilton-Poincar\'e-Herglotz equations (see \cite{ACLSS-2023})
\begin{equation*}
    \begin{split}
        & \ds\frac{\d q^i}{\d t} = \frac{\partial H}{\partial p_{i}}, \, \qquad \ds\frac{\d p_i}{\d t} = -\frac{\partial H}{\partial q^{i}} + \mathcal{B}_{ij}^A \bar{p}_A \frac{\partial H}{\partial p_{j}} - c_{AB}^{C}\mathcal{A}_{i}^{B}\bar{p}_C \frac{\partial H}{\partial \bar{p}_{A}} - p_{i}\frac{\partial H}{\partial s} , \\
        & \ds\frac{\d \bar{p}_A }{\d t} = c_{AB}^{C}\mathcal{A}_{i}^{B}\bar{p}_C \frac{\partial H}{\partial p_{i}} - c_{AB}^{C} \bar{p}_C \frac{\partial H}{\partial \bar{p}_{B}} - \bar{p}_{A}\frac{\partial H}{\partial s}, \, \qquad \ds\frac{\d s}{\d t}= p_{i}\frac{\partial H}{\partial p_{i}} + \bar{p}_{A}\frac{\partial H}{\partial \bar{p}_{A}} - H.
    \end{split}
\end{equation*}

}
\end{example}

\subsection{The Legendre transformation and the equivalence between the Lagrangian and Hamiltonian formalisms}

Let $L:E \times \r\to\r$ be  a Lagrangian function. We introduce the {\it Legendre transformation} associated to $L$ as the map defined by
$$
\begin{array}{rl}
\Lec_L : & E \times \r \longrightarrow E^*\times \r    \\ \noalign{\medskip}
     & (a_q,s) \longmapsto \Lec_L(a_q,s)=(\mu_q(a_q,s) ,s) ,
\end{array}
$$  
where
$$\mu_q(a_q,s): E_q\to\r \,, \qquad \mu_q(a_q,s)(b_q)= \displaystyle\frac{d}{dt}\Big\vert_{t=0}L(
a_{q}+tu_{q},s)  $$
 being $b_q\in
E_{q}$.

The map $\Lec_L$ is well defined and its local expression in fibred coordinates $(q^i,y^\alpha,s)$ on $E\times \r$, and $(q^i,y_\alpha,s)$ on $E^* \times \r$ is
\begin{equation}\label{locLeg}
Leg_L(q^i,y^\alpha,s)=\left(q^i,\displaystyle\frac{\partial L}{\partial y^\alpha},s\right) .
\end{equation}

From this local expression it is easy to prove that the Lagrangian $L$ is regular if, and only if, $\Lec_L$ is a local diffeomorphism.

The Legendre map induces a mapping
$\mathcal{T}^E \Lec_L :\tec \to
\mathcal{ T}^E( E^*\times \r)$ defined by
\begin{equation}\label{defLegprol}
\mathcal{T}^E {\Lec}_L (a_{q},v_{(b_q,s)}) =
\left(a_{q},({\Lec}_L)_*(b_q,s  )(v_{(b_q,s) })\right) ,  
\end{equation}
where $a_q\in
E_{q},\;(b_q,s  )\in E\times \r $ . 

Using \eqref{locLeg}, we deduce that the local expression of $\mathcal{T}^E \Lec_L$ in the coordinates of $\tec$ and $\tech$ (see Sections \ref{lagran prolong} and \ref{Ham prol}) is
\begin{equation}\label{locLegprol}
\mathcal{T}^E\Lec(q^i,y^\alpha,s, z^\alpha, 
\dot y^\alpha, \dot s)=\left(q^i, \displaystyle\frac{\partial L}{\partial y^\alpha},s, z^\alpha, \rho^i_\beta \, y^\beta \ds\frac{\partial^2 L}{\partial q^i \partial y^\alpha}+ \dot{y}^\beta    \ds\frac{\partial^2 L}{\partial y^\beta \partial y^\alpha} + \dot{s} \ds\frac{\partial^2 L}{\partial s \partial y^\alpha},\dot{s}\right)\,.
\end{equation}

\begin{theorem}\label{equivforma al}
Let $L\colon E \times \r\to\r$ be a regular Lagrangian. The pair $(\mathcal{T}^E\Lec_L, \Lec_L)$ is a morphism between the Lie algebroids $(\tec,\lcf\cdot,\cdot\rcf^{\pi },\rho^{\pi }\,)$
and
$(\tech,\lcf\cdot,\cdot\rcf^{* \pi },\rho^{* \pi })$
\[\xymatrix@R=8mm{\mathcal{T}^E(E \times \r)
\ar[rr]^-{\mathcal{T}^E\Lec_L}\ar[d]_-{\widetilde{\tau}_{E \times \r}}
&&\mathcal{T}^E(E^*\times \r)\ar[d]^-{\widetilde{\tau}_{E^* \times \r}}\\
E\times \r\ar[rr]^-{\Lec_L}
&&E^*\times \r}\] 
Moreover, if  $\eta_L$   (respectively, $\eta$) is the Lagrangian contact section associated to
$L$ (respectively, the contact section on
$(\tech)^*$), then
\begin{equation}\label{equiv formas}
(\mathcal{T}^E {\Lec}_L, {\Lec}_L)^*\eta=\eta_L \,, \qquad (\mathcal{T}^E {\Lec}_L, {\Lec}_L)^* \left(\d^{\tech}\eta\right)= \d^{\tec} \eta_L .
 \end{equation}
\end{theorem}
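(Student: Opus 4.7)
The proof splits naturally into three tasks: verify that $(\mathcal{T}^E\Lec_L, \Lec_L)$ is a Lie algebroid morphism, establish the first identity in \eqref{equiv formas}, and then deduce the second. My plan is to first reduce the morphism condition \eqref{lie morph} to a check on generators. Since the pull-back is automatically a morphism of graded commutative algebras and both exterior differentials satisfy the Leibniz rule, it suffices to verify $d^{\tec} \circ (\mathcal{T}^E\Lec_L, \Lec_L)^* = (\mathcal{T}^E\Lec_L, \Lec_L)^* \circ d^{\tech}$ on the local generators of the exterior algebra: the coordinate functions $q^i, y_\alpha, s$ on $E^*\times \r$, together with the dual basis sections $\widetilde{\mathcal{X}}^\alpha, \widetilde{\mathcal{V}}^\alpha, \widetilde{\mathcal{V}}^s$ of $(\tech)^*$.

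The core computation is to identify the pull-backs of these generators. Using \eqref{defLegprol}, \eqref{locLegprol}, \eqref{basec}, and \eqref{basehc}, a direct inspection yields $(\mathcal{T}^E\Lec_L, \Lec_L)^*q^i = q^i$, $(\mathcal{T}^E\Lec_L, \Lec_L)^*y_\alpha = \partial L/\partial y^\alpha$, $(\mathcal{T}^E\Lec_L, \Lec_L)^*s = s$. For the dual basis, $(\mathcal{T}^E\Lec_L, \Lec_L)^*\widetilde{\mathcal{X}}^\alpha = \mathcal{X}^\alpha$ because $\mathcal{T}^E\Lec_L$ preserves the first factor $a_q$, and $(\mathcal{T}^E\Lec_L, \Lec_L)^*\widetilde{\mathcal{V}}^s = \mathcal{V}^s$ because the $s$-coordinate is preserved. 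Finally, applying the chain rule as already encoded in \eqref{locLegprol}, $(\mathcal{T}^E\Lec_L, \Lec_L)^*\widetilde{\mathcal{V}}^\alpha$ equals $\rho^i_\beta(\partial^2 L/\partial q^i \partial y^\alpha)\mathcal{X}^\beta + (\partial^2 L/\partial y^\beta \partial y^\alpha)\mathcal{V}^\beta + (\partial^2 L/\partial s \partial y^\alpha)\mathcal{V}^s$, which by \eqref{difksimc} is precisely $d^{\tec}(\partial L/\partial y^\alpha)$.

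With these identifications in hand, verifying the morphism property on each generator becomes routine. The cases of $q^i$, $y_\alpha$, $s$ follow directly by comparing \eqref{difksimc} and \eqref{ext dif tech} against the pull-backs computed above. The case of $\widetilde{\mathcal{X}}^\alpha$ reduces to showing that $(\mathcal{T}^E\Lec_L, \Lec_L)^*(-\tfrac{1}{2}\mathcal{C}^\alpha_{\beta\gamma}\widetilde{\mathcal{X}}^\beta\wedge\widetilde{\mathcal{X}}^\gamma)$ equals $d^{\tec}\mathcal{X}^\alpha$, which is immediate since the structure functions $\mathcal{C}^\alpha_{\beta\gamma}$ depend only on $q^i\in Q$ and so are pulled back to themselves. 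For $\widetilde{\mathcal{V}}^\alpha$ and $\widetilde{\mathcal{V}}^s$ the right-hand side $(\mathcal{T}^E\Lec_L, \Lec_L)^*(d^{\tech} \cdot)$ vanishes by \eqref{ext dif tech}; the left-hand side $d^{\tec}(\mathcal{T}^E\Lec_L, \Lec_L)^*(\cdot)$ vanishes as well, since the pull-backs are $d^{\tec}$-exact (respectively $d^{\tec}(\partial L/\partial y^\alpha)$ and $d^{\tec}s$) and $(d^{\tec})^2 = 0$.

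To finish, \eqref{equiv formas} follows at once. For the $1$-section identity, combine $\eta = \widetilde{\mathcal{V}}^s - y_\alpha \widetilde{\mathcal{X}}^\alpha$ from \eqref{theta*kcsim} with $\eta_L = \mathcal{V}^s - (\partial L/\partial y^\alpha)\mathcal{X}^\alpha$ from \eqref{local thetac}; substituting the pull-backs above gives $(\mathcal{T}^E\Lec_L, \Lec_L)^*\eta = \eta_L$ directly. The second identity is then an immediate consequence of the morphism property just established: $(\mathcal{T}^E\Lec_L, \Lec_L)^*(d^{\tech}\eta) = d^{\tec}((\mathcal{T}^E\Lec_L, \Lec_L)^*\eta) = d^{\tec}\eta_L$. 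The only real obstacle here is notational bookkeeping, as the symbols $y^\alpha$ (fibre coordinate on $E$) and $y_\alpha$ (dual fibre coordinate on $E^*$) play overlapping roles in the coordinate systems of $\tec$ and $\tech$; there is no conceptual difficulty.
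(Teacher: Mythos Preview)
Your proof is correct and follows essentially the same route as the paper's own argument: both compute the pull-backs $(\mathcal{T}^E\Lec_L,\Lec_L)^*\widetilde{\mathcal{X}}^\alpha=\mathcal{X}^\alpha$, $(\mathcal{T}^E\Lec_L,\Lec_L)^*\widetilde{\mathcal{V}}^\alpha=\d^{\tec}(\partial L/\partial y^\alpha)$, $(\mathcal{T}^E\Lec_L,\Lec_L)^*\widetilde{\mathcal{V}}^s=\mathcal{V}^s$, then use \eqref{difksimc} and \eqref{ext dif tech} to verify \eqref{lie morph} on these generators and on functions, and finally read off \eqref{equiv formas} from the local expressions \eqref{local thetac} and \eqref{theta*kcsim}. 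Your presentation is slightly more explicit in justifying why it suffices to check on generators and in deriving the second identity of \eqref{equiv formas} directly from the morphism property, but the substance is identical.
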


\begin{proof}
First, we have to prove that the pair $(\mathcal{T}^E\Lec_L, \Lec_L)$ satisfies the condition \eqref{lie morph} to be a Lie algebroid morphism.

Let $(q^i)$ be local coordinates on $Q$, $\{e_\alpha\}$ a local basis of sections of $E$, and $\{\mathcal{X}_\alpha,\mathcal{V}_\alpha, \mathcal{V}_s\}$ and $\{\widetilde{\mathcal{X}}_\alpha,\widetilde{\mathcal{V}}_\alpha, \widetilde{\mathcal{V}}_s\}$ the corresponding local basis of sections of $\tec$ and $\tech$, respectively. Then, using \eqref{difksimc}, \eqref{defLegprol}  and \eqref{locLegprol} we deduce that
{\small\begin{equation}
\left(\mathcal{T}^E {\Lec}_L, {\Lec}_L\right)^* \widetilde{\mathcal{X}}^\alpha = \mathcal{X}^\alpha \,, \quad \left(\mathcal{T}^E {\Lec}_L, {\Lec}_L\right)^* \widetilde{\mathcal{V}}^\alpha = \d^{\tec} \left(\ds\frac{\partial L}{\partial y^\alpha}\right) \,, \quad \left(\mathcal{T}^E {\Lec}_L, {\Lec}_L\right)^* \widetilde{\mathcal{V}}^s = \mathcal{V}^s
\end{equation}}
Thus, from \eqref{difksimc} and \eqref{ext dif tech} we conclude
\begin{align}
& \left(\mathcal{T}^E {\Lec}_L, {\Lec}_L\right)^*\left(\d^{\tech} f' \right)  = \d^{\tec} \left( f' \circ {\Lec}_L \right) , \label{pulldifunc} \\  \noalign{\medskip}
& \left(\mathcal{T}^E {\Lec}_L, {\Lec}_L\right)^*\left(\d^{\tech} \widetilde{\mathcal{X}}^\alpha \right)  = \d^{\tec} \left( \left(\mathcal{T}^E {\Lec}_L, {\Lec}_L\right)^* \widetilde{\mathcal{X}}^\alpha \right) , \\ \noalign{\medskip}
& \left(\mathcal{T}^E {\Lec}_L, {\Lec}_L\right)^*\left(\d^{\tech} \widetilde{\mathcal{V}}^\alpha \right)  = \d^{\tec} \left( \left(\mathcal{T}^E {\Lec}_L, {\Lec}_L\right)^* \widetilde{\mathcal{V}}^\alpha \right) , \\ \noalign{\medskip}
& \left(\mathcal{T}^E {\Lec}_L, {\Lec}_L\right)^*\left(\d^{\tech} \widetilde{\mathcal{V}}^s \right)  = \d^{\tec} \left( \left(\mathcal{T}^E {\Lec}_L, {\Lec}_L\right)^* \widetilde{\mathcal{V}}^s \right) ,
\end{align}
for all $f' \in \Cinfty(\ers)$ and for all $\alpha$, which proves that the pair $(\mathcal{T}^E\Lec_L, \Lec_L)$ is a Lie algebroid morphism.

Finally, from \eqref{locLeg} and \eqref{defLegprol}, using the local expressions \eqref{local thetac} and \eqref{theta*kcsim}  of $\eta_L$ and $\eta$ respectively, and taking into account the above results, we deduce \eqref{equiv formas}.
\end{proof}

%We can also prove the following result.
%\begin{proposition}
%The Lagrangian $L$ is regular if, and only if, the Legendre transformation $\Lec_L : \er \to \ers$ is a local diffeomorphism.
%\end{proposition}
%
%\begin{proof}
%$L$ is regular if, and only if, the matrix $\left(\frac{\partial^2L}{\partial y^\alpha\partial y^\beta}\right)$ is regular. Then, using \eqref{locLeg} we conclude.
%\end{proof}
Assume now that $L$ is hyperregular, that is, $\Lec_L$ is a global diffeomorphism. From \eqref{defLegprol} and Theorem \ref{equivforma al}, we deduce that the pair $(\mathcal{T}^E {\Lec}_L, {\Lec}_L)$ is a Lie algebroid isomorphism. Moreover, we may consider the Hamiltonian function $H: \ers \to \r$ defined by
$$H = E_L \circ {\Lec}_L^{-1} ,$$
where $E_L : \er \to \r$ is the Lagrangian energy associated to $L$ given by \eqref{energy}. The Hamiltonian section $\xi_H \in Sec(\tech)$ is characterized by the conditions \eqref{geometric ec H al} and the Lagrangian section $\Gamma_L \in Sec(\tec)$ is characterized by \eqref{ec ge EL co}. Therefore, we have the following.
\begin{theorem}\label{sanantonio}
If the Lagrangian $L$ is hyperregular, then the Lagrangian section $\Gamma_L$ associated to $L$  and the Hamiltonian section $\xi_H$ are $(\mathcal{T}^E {\Lec}_L, {\Lec}_L)$-related, that is,
\begin{equation}\label{Legrelated}
\xi_H \circ {\Lec}_L = \mathcal{T}^E {\Lec}_L \circ \Gamma_L .
\end{equation}

Moreover, if $\widetilde{c} : I \subset \r \to \er$ is a solution of the Herglotz equations associated to $L$, then $\sigma = {\Lec}_L \circ \widetilde{c} :  I \subset \r \to \ers$ is a solution of the Hamilton equations associated to $H$ and, conversely, if $\sigma :  I \subset \r \to \ers$ is a solution of the Hamilton equations, then $\widetilde{c} = {\Lec}_L^{-1} \circ \sigma$ is a solution of the Herglotz equations.
\end{theorem}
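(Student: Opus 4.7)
The strategy is to define $\xi':=\mathcal{T}^E\Lec_L\circ\Gamma_L\circ\Lec_L^{-1}$, which is a well-defined section of $\widetilde\tau_{E^*\times\r}$ because hyperregularity of $L$ makes $(\mathcal{T}^E\Lec_L,\Lec_L)$ a Lie algebroid isomorphism by Theorem \ref{equivforma al}. I will show that $\xi'$ satisfies the two defining conditions \eqref{geometric ec H al} of $\xi_H$, and conclude $\xi'=\xi_H$ by uniqueness, which is exactly \eqref{Legrelated}. The workhorse of the argument is the following elementary fact: whenever $(F,f)$ is a Lie algebroid morphism and $X\in Sec(E_1)$, $Y\in Sec(E_2)$ satisfy $F\circ X=Y\circ f$, then $(F,f)^*(i_Y\beta)=i_X(F,f)^*\beta$ for every section $\beta$ of $\bigwedge^k E_2^*$. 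Applied to $X=\Gamma_L$, $Y=\xi'$ and $\beta\in\{\eta,\d\eta\}$, together with the pullback identities \eqref{equiv formas} and the relation $E_L=H\circ\Lec_L$, this immediately yields $i_{\xi'}\eta\circ\Lec_L=i_{\Gamma_L}\eta_L=-E_L=-(H\circ\Lec_L)$, hence $i_{\xi'}\eta=-H$.

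For the second, $2$-form equation the only additional ingredient needed is the compatibility of the Reeb sections. Setting $\Rb':=\mathcal{T}^E\Lec_L\circ\Rb_L\circ\Lec_L^{-1}$ and applying the same pullback identity to $\eta$ and $\d\eta$ gives $i_{\Rb'}\eta=1$ and $i_{\Rb'}\d\eta=0$, so by the uniqueness of the Reeb section of the contact Lie algebroid $(\tech,\eta)$ one has $\Rb'=\Rb$. Combined with the anchor compatibility $\rho^{*\pi}\circ\mathcal{T}^E\Lec_L=T\Lec_L\circ\rho^\pi$ (which follows from \eqref{lie morph} applied to $0$-forms), this yields
$$\rho^{*\pi}(\Rb)(H)\circ\Lec_L=\rho^\pi(\Rb_L)(E_L).$$
Using moreover $(\mathcal{T}^E\Lec_L,\Lec_L)^*\d H=\d(H\circ\Lec_L)=\d E_L$, the pullback of the Hamiltonian equation $i_{\xi'}\d\eta=\d H-\rho^{*\pi}(\Rb)(H)\,\eta$ coincides term by term with the Lagrangian equation \eqref{ec ge EL co} for $\Gamma_L$, which holds by Theorem \ref{algeform co}. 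Since $(\mathcal{T}^E\Lec_L,\Lec_L)^*$ is injective on sections of $\bigwedge^\bullet(\tech)^*$ (the morphism is an isomorphism), this forces the unpulled identity itself, and uniqueness of $\xi_H$ then gives $\xi'=\xi_H$.

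For the statement about integral curves, once \eqref{Legrelated} is established the anchor compatibility gives, for every $p\in\er$,
$$\rho^{*\pi}(\xi_H)(\Lec_L(p))=\rho^{*\pi}\bigl(\mathcal{T}^E\Lec_L(\Gamma_L(p))\bigr)=T_p\Lec_L\bigl(\rho^\pi(\Gamma_L)(p)\bigr),$$
so the vector fields $\rho^\pi(\Gamma_L)\in\vf(\er)$ and $\rho^{*\pi}(\xi_H)\in\vf(\ers)$ are $\Lec_L$-related. Therefore $\sigma:=\Lec_L\circ\widetilde c$ is an integral curve of $\rho^{*\pi}(\xi_H)$ whenever $\widetilde c$ is an integral curve of $\rho^\pi(\Gamma_L)$, and the converse follows by applying $\Lec_L^{-1}$. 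The only mild technical obstacle is the Reeb compatibility step: because $\Rb$ is defined only implicitly via uniqueness, one must first verify $\Rb'=\Rb$ before the $\d\eta$-equation can be pulled back; beyond this, the proof is a mechanical application of \eqref{equiv formas} together with uniqueness.
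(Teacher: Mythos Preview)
Your proof is correct and follows essentially the same approach as the paper: define the candidate section $\xi'=\mathcal{T}^E\Lec_L\circ\Gamma_L\circ\Lec_L^{-1}$, verify it satisfies \eqref{geometric ec H al} via the pullback identities \eqref{equiv formas}, the relation $E_L=H\circ\Lec_L$, and the Reeb compatibility $\mathcal{T}^E\Lec_L(\Rb_L)=\Rb$, then conclude by uniqueness. The paper's own proof is considerably terser---it essentially just lists the ingredients \eqref{ec ge EL co}, \eqref{equiv formas}, \eqref{pulldifunc} and asserts the Reeb relation without spelling out the contraction--pullback lemma or the anchor-compatibility argument for integral curves---so your version is in fact more complete.
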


\begin{proof}
Let $\xi_H = \mathcal{T}^E {\Lec}_L \circ \Gamma_L \circ {\Lec}_L^{-1}$ be the Hamiltonian section solution to \eqref{geometric ec H al}. Then, from \eqref{ec ge EL co}, \eqref{equiv formas}, \eqref{pulldifunc}, and since
$$\mathcal{T}^E {\Lec}_L ( \Rb_L) = \Rb ,$$
we obtain that \eqref{Legrelated} holds. Now, using \eqref{Legrelated} and Theorem \ref{equivforma al}, we deduce the second part.
\end{proof}

\begin{remark}
When $E= T Q$, the Legendre transformation defined above coincides with the Legendre map of the standard contact formalism, and Theorem \ref{sanantonio} gives the equivalence between the standard contact Lagrangian and Hamiltonian formalisms, see \cite{LGLRR-2021,Gaset2020}.
\end{remark}

\section{Legendrian Lie subalgebroids in contact Lie algebroids}
\label{legen-lie-subalg-2023}

Given a contact manifold $(M,\eta)$, an interesting type of submanifolds are the so-called Legendrian submanifolds, and several results are known that help us to understand the dynamics of a contact system as a Legendrian submanifold (see for example, \cite{LLM-21}). This concept is the natural extension of that of Lagrangian submanifold that has been extensively used in symplectic geometry \cite{W-1971}, and later generalized to Poisson and Jacobi manifolds \cite{ELLMS-2021, ILMM-1997}.
In order to extend this concept to Lie algebroids, we introduce the notion of a Legendrian Lie subalgebroid of a contact Lie algebroid, and we give a characterization that will allow us to relate these objects with the solution of the Hamilton-Jacobi equation.

\begin{definition}
Let $(E,\lcf\cdot,\cdot\rcf_E, \rho)$ be a contact Lie algebroid of rank $2k+1$ over a manifold $M$ with contact section $\eta$, and $j: F \to E$ , $i: N \to M$ be a Lie subalgebroid (see Definition \ref{subalg}). Then, the Lie subalgebroid is said to be Legendrian if
\begin{enumerate}
    \item $\text{dim} \, F_x = k $,
    \item $\left(\eta(i(x))\right)_{\big\vert_{j(F_x)}} = 0$.
\end{enumerate}
\end{definition}

Let $(E,\lcf\cdot,\cdot\rcf, \rho)$ be a Lie algebroid of rank $m$ over a manifold $Q$ of dimension $n$. Then, the prolongation $\tech$ of $E$ over $\pi: \ers \to Q$ is a contact Lie algebroid (see Theorem \ref{prolongcontalg}). Moreover, if $q$ is a point of $Q$ and $E^*_q \times \r$ is the fibre of $\ers$ over the point $q$, we denote by
$$j_q: T E^*_q \times \r \to  \tech \,, \qquad i_q: E^*_q \times \r \to \ers$$
the maps given by
$$j_q(v_{b^*_q},s) = (0_q,v_{(b^*_q,s)}) \,, \qquad i_q(b_q^*,s) = (b_q^*,s) ,$$
for $(v_{b^*_q},s) \in T E^*_q \times \r$ and $(b^*_q,s) \in E^*_q \times \r$, where $0_{q}: Q \to E$ is the zero section.

On the other hand, if $\gamma$ is a section of $\pi: E^* \times \r \to Q$ we will denote by $F_\gamma$ the vector bundle over $\gamma(Q)$ given by
\begin{equation}\label{Falpha}
 F_\gamma= \{\left(a, T \gamma(\rho(a)\right) \in E \times T( E^* \times \r) / a \in E\}  , 
\end{equation}
and by $j_\gamma : F_\gamma \to \tech$ and $i_\gamma : \gamma(Q) \to E^* \times \r$ the canonical inclusions. Note that the vector bundles $E$ and $F_\gamma$ have the same rank $m$, so that the pair $\left[(\Id_E, T \gamma \circ \rho),\gamma\right]$ is an isomorphism between these vector bundles, where the map $(\Id_E, T \gamma \circ \rho)$ is given by
$$(\Id_E, T \gamma \circ \rho)(a) = (a, T \gamma(\rho(a))) \,, \qquad a \in E .$$
Thus, $F_\gamma$ is a Lie algebroid over $\gamma(Q)$.

%Moreover, we may prove the following results.

\begin{definition}
Consider a function $f : Q \to \r$. We denote by $j^1 f : Q \to E^* \times \r$, $j^1 f = (\d^E f, f)$ the {\it $1$-jet} of $f$, given in local coordinates by
\begin{equation}
j^1 f(q^i) = \left( q^i, \rho^i_\alpha \ds\frac{\partial f}{\partial q^i}, f(q^i)\right).
\end{equation}
\end{definition}
Then, we have the following
\begin{proposition}\label{propLeg}
Let $(E,\lcf\cdot,\cdot\rcf, \rho)$ be a Lie algebroid of rank $m$ over a manifold $Q$ of dimension $n$, and $\tech$ the prolongation of $E$ over $\pi: \ers \to Q$.
\begin{enumerate}
    \item If $q \in Q$, then $j_q : T E^*_q \times \r \to  \tech$ and $i_q: E^*_q \times \r \to \ers$ is a Legendrian Lie subalgebroid of the contact Lie algebroid $\tech$.
    \item If $\gamma \in Sec(\ers)$, then $j_\gamma : F_\gamma \to \tech$ and $i_\gamma : \gamma(Q) \to E^* \times \r$ is a Legendrian Lie subalgebroid of the contact Lie algebroid $\tech$ if, and only if, $\gamma$ is locally the $1$-jet of a function on $Q$.
\end{enumerate}
\end{proposition}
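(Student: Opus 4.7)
The plan is, for each item, to verify the three defining conditions of a Legendrian Lie subalgebroid: that $(j,i)$ is a Lie algebroid morphism (with $(j,i)$ a vector bundle monomorphism and $i$ an injective immersion), that the fibre rank equals $k=m$, and that the pullback of $\eta$ vanishes.

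For item (1), the inclusion $i_q:E^*_q\times\r\hookrightarrow\ers$ is an embedding, and $j_q$ is fibrewise injective with image inside $\{0_q\}\times T(E^*_q\times\{s\})\subset\tech$. I would endow $F_q=TE^*_q\times\r$ with the Lie algebroid structure of the integrable distribution tangent to the slices $E^*_q\times\{s\}$; the morphism property is then immediate on a basis, since from \eqref{theta*kcsim} and \eqref{ext dif tech} one reads $(j_q,i_q)^*\widetilde{\mathcal X}^\alpha=0$ and $(j_q,i_q)^*\widetilde{\mathcal V}^s=0$ (each evaluated on $(0_q,v_{(b^*_q,s)})$), while $(j_q,i_q)^*\widetilde{\mathcal V}^\alpha$ reproduces the coordinate differential inside the fibre $TE^*_q$. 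The fibre rank is $m=k$ because $\rk\tech=2m+1$. The Legendrian condition is then immediate: $\Theta(0_q,v_{(b^*_q,s)})=b^*_q(0_q)=0$ and $\widetilde{\mathcal V}^s(0_q,v_{(b^*_q,s)})=v_{(b^*_q,s)}(s)=0$, since $v_{(b^*_q,s)}$ is tangent to $E^*\times\{s\}$.

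For item (2), write $\gamma=(\gamma_1,\gamma_0)$ with $\gamma_1\in Sec(E^*)$ and $\gamma_0\in\Cinfty(Q)$. The map $a\mapsto(a,T\gamma(\rho(a)))$, together with the diffeomorphism $\gamma:Q\to\gamma(Q)$, furnishes a vector bundle isomorphism $E\cong F_\gamma$, through which I transfer to $F_\gamma$ the Lie algebroid structure of $E$; in particular $\rk F_\gamma=m=k$. A basis-by-basis computation, using $\rho^{*\pi}(j_\gamma(a))=T\gamma(\rho(a))$ and \eqref{ext dif tech}, gives
\begin{equation*}
(j_\gamma,i_\gamma)^*\widetilde{\mathcal X}^\alpha=e^\alpha,\qquad (j_\gamma,i_\gamma)^*\widetilde{\mathcal V}^\alpha=\d^E\gamma_\alpha,\qquad (j_\gamma,i_\gamma)^*\widetilde{\mathcal V}^s=\d^E\gamma_0,
\end{equation*}
from which, together with the structure equations \eqref{difEloc} and $(\d^E)^2=0$, the identity $(j_\gamma,i_\gamma)^*\d^{\tech}=\d^{F_\gamma}(j_\gamma,i_\gamma)^*$ follows on every basis element, so $(j_\gamma,i_\gamma)$ is a Lie algebroid morphism for any section $\gamma$.

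The key step is then the Legendrian condition, which singles out the $1$-jets. From \eqref{theta*kcsim} and the pullbacks above,
\begin{equation*}
(j_\gamma,i_\gamma)^*\eta=\d^E\gamma_0-\gamma_1
\end{equation*}
(identifying $\gamma_1=\gamma_\alpha e^\alpha$), and this vanishes on all of $F_\gamma$ if and only if $\gamma_1=\d^E\gamma_0$, equivalently $\gamma=(\d^E\gamma_0,\gamma_0)=j^1\gamma_0$, so that $\gamma$ is locally the $1$-jet of the function $\gamma_0$ itself. The chief technical burden I anticipate is the clean bookkeeping in the Lie algebroid morphism verification in part (2), where one must be careful about how sections of $F_\gamma$ are identified with sections of $E$; the Legendrian identification itself is a one-line coordinate computation once the pullbacks of the dual basis are in hand.
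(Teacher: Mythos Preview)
Your proposal is correct and follows essentially the same approach as the paper. The only organizational difference is that for item~(2) the paper packages the pullback computation $[(\Id_E,T\gamma\circ\rho),\gamma]^*\eta=\d^E\gamma_s-\gamma_0$ as a separate result (Proposition~\ref{propgamma}, referenced forward), whereas you carry out the same basis-by-basis computation of the pullbacks of $\widetilde{\mathcal X}^\alpha$, $\widetilde{\mathcal V}^\alpha$, $\widetilde{\mathcal V}^s$ directly in the proof; the content and the logic are identical.
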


\begin{proof}
 \begin{enumerate}
     \item We can see that the rank of the vector bundle $T E^*_q \times \r \to E^*_q \times \r$ is $m$.
     Consider local coordinates $(q^i)$ on $Q$, $\{e_\alpha\}$ a local basis of sections of $E$, $(q^i,y_\alpha,s)$ the corresponding local coordinates on $\ers$ and $\{\widetilde{\mathcal{X}}_\alpha,\widetilde{\mathcal{V}}_\alpha,\widetilde{\mathcal{V}}_s\}$ the corresponding local basis of sections of $\tech$. From \eqref{basehc}, it follows that
     $$(j_q,i_q)^* (\widetilde{\mathcal{X}}^\alpha) = 0 \, , \qquad (j_q,i_q)^* (\widetilde{\mathcal{V}}^\alpha) = \d^{T E^*_q \times \r} (y_\alpha \circ i_q)  \,, \qquad (j_q,i_q)^* (\widetilde{\mathcal{V}}^s) = 0 \,,$$
and
\begin{equation}\label{jqproof}
 j_q\left(\ds\frac{\partial}{\partial y_\alpha}\Big\vert_{b^*_q}, s \right) = \widetilde{\mathcal{V}}_\alpha(b^*_q,s) \,, \qquad b^*_q \in E^*_q .   
\end{equation}
Using \eqref{ext dif tech}, this implies that $j_q: T E^*_q \times \r \to  \tech$ , $i_q: E^*_q \times \r \to \ers$ is a morphism of Lie algebroids. Thus, since $j_q$ is injective and $i_q$ is an injective immersion, we deduce that $(j_q,i_q)$ is a Lie subalgebroid of $\tech$. Finally, from \eqref{theta*kcsim} and \eqref{jqproof} we conclude that
$$\eta\left(i_q(b^*_q,s)\right)\Big\vert_{j_q\left(T_{b^*_q} (E^*_q) \times \r\right)} = 0 .$$
\item If $\gamma$ is a section of $\ers$ then the Lie algebroids $E \to Q$ and $F_\gamma \to \gamma(Q)$ are isomorphic under $\left[(\Id_E, T \gamma \circ \rho),\gamma\right]$. Note that $(\Id_E, T \gamma \circ \rho)$ is injective and $\gamma$ is an injective immersion. 

On the other hand, from Proposition \ref{propgamma}, we have
$$\left[\, (\Id_E, T \gamma \circ \rho),\gamma \, \right]^* \eta = \d^E   \gamma_s - \gamma_0 ,
$$
where $\gamma = (\gamma_0,   \gamma_s)$, $\gamma_0 : Q \to E^*$ and $\gamma_s : Q \to \r$.

Therefore, the Lie subalgebroid $(j_\gamma,i_\gamma)$ is Legendrian if, and only if, $\gamma$ is locally the $1$-jet of a function on $Q$, namely $\gamma= j^1 \gamma_s$.
\end{enumerate}
\end{proof}

\begin{remark}
When $E$ is the standard Lie algebroid $T Q$, a section $\gamma : Q \to T^* Q \times \r$ is a Legendrian submanifold of $(T^* Q \times \r, \eta_Q)$ if, and only if, $\gamma$ is locally the $1$-jet of a function on $Q$ in the usual sense (see Proposition 3 in \cite{LLM-21}).
\end{remark}

\section{The Hamilton-Jacobi equations}\label{hje}

The Hamilton–Jacobi problem consists in finding a function 
$S:Q\to \r$   (called the
generating function) solution to the equation $H(q^i,\parder{S}{q^i})=E$,
for some $E\in \r$, which is called the Hamilton–Jacobi equation for $H$.
Of course, one can easily see that the above equation  can be written as $\d(H\circ \d S)=0$,
which opens the possibility to consider  general $1$-forms instead of just differentials of a function.

%Recently, the observation that given such a section 
%$\gamma:Q\to T^*Q$
% permits to relate the
%Hamiltonian vector field $X_H$ with its projection $X_H^\gamma$
%via $\gamma$ onto $Q$, in the sense that $X_H$
%and
%$X_H^\gamma$ are $\gamma$-related if, and only if, $\d(S\circ \gamma)=E$ , provided that $\gamma$ is closed, has opened the possibility to discuss
%the Hamilton-Jacobi problem in many other scenarios as nonholonomic systems, singular Lagrangian systems, higher-order systems,
%and field theories.

Given a Hamiltonian function $H : E^* \times \r \to \r$, in this section we provide some ingredients necessary to study the Hamilton-Jacobi problem for a contact Hamiltonian section and for the corresponding evolution section.

Let $\tech$ be the prolongation of  the Lie algebroid $(E,\lcf\cdot,\cdot\rcf, \rho)$ over  $\pi: E^* \times \r \to Q$ and consider the morphism $((\Id_E, T \gamma \circ \rho), \gamma))$ between the vector bundles $E$ and $\tech$
$$\xymatrix@C=25mm @R=8mm{
E \ar[r]^{(\Id_E, T \gamma \circ \rho)} \ar[d] & \tech \ar[d] \\
Q \ar[r]^\gamma & \ers
}$$
defined by $(\Id_E, T \gamma \circ \rho)(a_q) = (a_q, (T_q \gamma) \rho(a_q))$, for $a_q \in E_q$ and $q \in Q$.
\begin{proposition}\label{propgamma}
Let $\eta$ be the contact $1$-section of $\tech$ defined on \eqref{contsec}.
If $\gamma$ is a section of $E^* \times \r \to Q$, then the pair $$\left[\, (\Id_E, T \gamma \circ \rho), \gamma \,  \right]$$ is a morphism between the Lie algebroids $(E,\lcf\cdot,\cdot\rcf, \rho)$ and $(\tech,\lcf\cdot,\cdot\rcf^{*\pi}, \rho^{*\pi})$. Moreover,
\begin{equation}
\left[\, (\Id_E, T \gamma \circ \rho),\gamma \, \right]^* \eta = \d^E   \gamma_s - \gamma_0 ,
\end{equation}
where $\gamma = (\gamma_0,   \gamma_s)$, $\gamma_0 : Q \to E^*$ and $\gamma_s : Q \to \r$.
\end{proposition}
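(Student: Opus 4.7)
The plan is to prove both assertions directly in local coordinates adapted to a basis $\{e_\alpha\}$ of $E$, where $\gamma$ reads $\gamma(q) = (q^i, \gamma^\alpha(q), \gamma_s(q))$ so that $\gamma_0 = \gamma^\alpha e^\alpha$. The central computation is to identify the pullbacks, under $(F,f) := ((\Id_E, T\gamma \circ \rho), \gamma)$, of the generators of $\mathrm{Sec}(\bigwedge^\bullet (\tech)^*)$. First I would check that $(F,f)$ is a vector bundle morphism with the correct projections (this is immediate from the definition of $\tech$ in \eqref{tech} and the fact that $\rho(a_q) = T\pi(T_q\gamma \, \rho(a_q))$). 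Then, using the local basis \eqref{basehc}, a straightforward computation gives
\begin{equation}
(F,f)^* \widetilde{\mathcal{X}}^\alpha = e^\alpha, \qquad (F,f)^* \widetilde{\mathcal{V}}^\alpha = \d^E \gamma^\alpha, \qquad (F,f)^* \widetilde{\mathcal{V}}^s = \d^E \gamma_s.
\end{equation}

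Next, to verify that $(F,f)$ is a Lie algebroid morphism, I would check condition \eqref{lie morph}, namely $\d^E \circ (F,f)^* = (F,f)^* \circ \d^{\tech}$, on a set of generators. Applying it to a function $f' \in \Cinfty(\ers)$ reduces (via \eqref{ext dif tech} and \eqref{defunc}) to the chain rule for $\d^E(f' \circ \gamma)$. Applying it to $\widetilde{\mathcal{X}}^\gamma$ matches $-\frac{1}{2}\mathcal{C}^\gamma_{\alpha\beta} e^\alpha \wedge e^\beta = \d^E e^\gamma$ on both sides, using \eqref{difEloc} and \eqref{ext dif tech}. Applying it to $\widetilde{\mathcal{V}}^\alpha$ and $\widetilde{\mathcal{V}}^s$ yields $0$ on the right-hand side of \eqref{ext dif tech}, while on the left it is $\d^E(\d^E \gamma^\alpha) = 0$ and $\d^E(\d^E \gamma_s) = 0$, thanks to $(\d^E)^2 = 0$. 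Since these generate the exterior algebra locally, the morphism property holds.

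For the formula, I would use the local expression \eqref{theta*kcsim}: $\eta = \widetilde{\mathcal{V}}^s - y_\alpha \widetilde{\mathcal{X}}^\alpha$. Pulling back and using the identities above, I obtain
\begin{equation}
(F,f)^* \eta = \d^E \gamma_s - (y_\alpha \circ \gamma) \, e^\alpha = \d^E \gamma_s - \gamma^\alpha e^\alpha = \d^E \gamma_s - \gamma_0,
\end{equation}
which is the desired identity. Alternatively, one can give a coordinate-free derivation by computing, for a section $\sigma$ of $E$, the quantity $\eta(\gamma(q))\bigl(\sigma(q), T_q\gamma(\rho(\sigma(q)))\bigr)$: the $\widetilde{\mathcal{V}}^s$-part produces $T_q\gamma(\rho(\sigma))(s) = \rho(\sigma)(\gamma_s) = \d^E\gamma_s(\sigma)$, while the $\Theta$-part yields $\gamma_0(q)(\sigma(q))$ by the definition \eqref{theta A kc} of $\Theta$.

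The only delicate step is the verification of the Lie algebroid morphism property on $\widetilde{\mathcal{V}}^\alpha$, since the relation $(F,f)^*\widetilde{\mathcal{V}}^\alpha = \d^E\gamma^\alpha$ is not immediate from the basis formulas but requires computing $\widetilde{\mathcal{V}}^\alpha(a_q, T_q\gamma(\rho(a_q))) = \rho(a_q)(\gamma^\alpha)$ in terms of the anchor. Once this is in hand, the rest of the argument is essentially bookkeeping with the structure functions $\rho^i_\alpha$ and $\mathcal{C}^\gamma_{\alpha\beta}$.
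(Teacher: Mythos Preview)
Your proposal is correct and follows essentially the same route as the paper: compute the pullbacks $(F,f)^*\widetilde{\mathcal X}^\alpha = e^\alpha$, $(F,f)^*\widetilde{\mathcal V}^\alpha = \d^E\gamma_\alpha$, $(F,f)^*\widetilde{\mathcal V}^s = \d^E\gamma_s$, verify the morphism condition \eqref{lie morph} on generators via \eqref{difEloc} and \eqref{ext dif tech}, and then read off the pullback of $\eta$. The paper is slightly terser on the morphism check and computes the pullback of $\Theta$ directly from the intrinsic definition \eqref{theta A kc} rather than from the local expression, which is precisely the coordinate-free alternative you mention at the end.
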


\begin{proof}
Consider $(q^i)$ the local coordinates on $Q$ and $\{e_\alpha\}$ the local basis of sections of $E$. Let $\{\widetilde{\mathcal{X}}_\alpha,\widetilde{\mathcal{V}}_\alpha,\widetilde{\mathcal{V}}_s\}$ be a local basis of sections of $\tech$ and
$\{\widetilde{\mathcal{X}}^\alpha,\widetilde{\mathcal{V}}^\alpha,\widetilde{\mathcal{V}}^s   \}$ be the dual basis. Suppose $\gamma$ is locally written as $\gamma(q^i)= (q^i, \gamma_\alpha(q^i),   \gamma_s(q^i))$, then using \eqref{basehc}, it follows that
\begin{equation}
(\Id_E, T \gamma \circ \rho) \circ e_\alpha = \left( \widetilde{\mathcal{X}}_\alpha + \rho^i_\alpha \ds\frac{\partial \gamma_\beta}{\partial q^i} \widetilde{\mathcal{V}}_\beta + \rho^i_\alpha \ds\frac{\partial   \gamma_s}{\partial q^i} \widetilde{\mathcal{V}}_s \right) \circ \gamma ,
\end{equation}
for $\alpha=1,\ldots,m$. 
Thus, from \eqref{defunc} and \eqref{pullsec}  we have
\begin{equation}\label{pullproof}
\left[\,(\Id_E, T \gamma \circ \rho),\gamma\, \right]^* \widetilde{\mathcal{X}}^\alpha = e^\alpha \, , \qquad \left[\,(\Id_E, T \gamma \circ \rho),\gamma\, \right]^* \widetilde{\mathcal{V}}^\alpha = \d^E \gamma_\beta \, , \qquad\left[\,(\Id_E, T \gamma \circ \rho),\gamma\, \right]^* \widetilde{\mathcal{V}}^s = \d^E   \gamma_s .
\end{equation}
Therefore, from \eqref{difEloc}, \eqref{lie morph} and \eqref{ext dif tech} we obtain that the pair $\left[\, (\Id_E, T \gamma \circ \rho), \gamma \,  \right]$ is a morphism between the Lie algebroids $E \to Q$ and $\tech \to \ers$.

Now, if $q$ is a point of $Q$ and $a_q \in E_q$ then, using \eqref{theta A kc}, we have that
\begin{equation}\label{thetapull}
\left( \, \, \left[\,(\Id_E, T \gamma \circ \rho),\gamma\, \right]^* \Theta \right) (q) (a_q) = \Theta (\gamma(q)) (a_q, (T_q \gamma) (\rho(a_q))) = \gamma_0 (q) (a_q) .
\end{equation}
Finally, since $\eta= \widetilde{\mathcal{V}}^s - \Theta$, from \eqref{pullproof} and  \eqref{thetapull} we conclude that
$$((\Id_E, T \gamma \circ \rho),\gamma)^* \eta = \d^E   \gamma_s - \gamma_0 .$$
\end{proof}

\begin{corollary}
If $\gamma \in Sec(\ers)$ is the $1$-jet of a function on $Q$, then
\begin{equation}
\left[\,(\Id_E, T \gamma \circ \rho),\gamma\, \right]^* \eta = 0 .
\end{equation}
\end{corollary}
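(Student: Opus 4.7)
The plan is to derive the corollary as an immediate consequence of Proposition \ref{propgamma} combined with the definition of the $1$-jet of a function. Since Proposition \ref{propgamma} already provides the general formula
\[
\left[\,(\Id_E, T \gamma \circ \rho),\gamma\, \right]^* \eta = \d^E \gamma_s - \gamma_0
\]
for an arbitrary section $\gamma = (\gamma_0, \gamma_s)$ of $\pi : E^* \times \r \to Q$, the only remaining task is to substitute the specific form $\gamma = j^1 f$ and observe that the two terms cancel.

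More precisely, by hypothesis $\gamma$ is locally the $1$-jet of some smooth function $f : Q \to \r$, so by the definition given just before Proposition \ref{propLeg} we have $j^1 f = (\d^E f, f)$. Identifying components, this means $\gamma_0 = \d^E f$ and $\gamma_s = f$. Substituting into the formula from Proposition \ref{propgamma} gives
\[
\left[\,(\Id_E, T \gamma \circ \rho),\gamma\, \right]^* \eta = \d^E f - \d^E f = 0,
\]
which is the desired conclusion.

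There is essentially no obstacle here, since the result is a direct corollary; the only point worth flagging is that the statement is local (``locally the $1$-jet''), but the formula from Proposition \ref{propgamma} is also local in nature and the vanishing of the pullback is a pointwise condition, so the local hypothesis suffices to conclude the pullback vanishes everywhere on $Q$. One could also remark, as a geometric interpretation, that combining this corollary with Proposition \ref{propLeg}(2) recovers the fact that the image of $j^1 f$ gives rise to a Legendrian Lie subalgebroid of the contact Lie algebroid $\tech$, which is the link with the Hamilton--Jacobi theory developed in the remainder of the section.
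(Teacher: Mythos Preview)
Your proof is correct and is exactly the intended argument: the paper states the corollary without proof precisely because it follows immediately from Proposition~\ref{propgamma} by substituting $\gamma_0 = \d^E f$ and $\gamma_s = f$, which is what you do.
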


\subsection{The Hamilton-Jacobi equations for the Hamiltonian section}

Let $(E,\lcf\cdot,\cdot\rcf,\rho\,)$ be a Lie algebroid over a manifold $Q$ and $(\lcf\cdot,\cdot\rcf^{*\pi },\rho^{*\pi }\,)$ be the Lie algebroid structure on $\tech$. 
Let $H : E^* \times \r \to \r$ be a Hamiltonian function and $\xi_H  \in Sec(\tech)$ be the corresponding Hamiltonian section. Consider $\gamma$ a section of $\pi : E^* \times \r \to Q$ and assume that, in local coordinates, it reads
$$\gamma(q^i)=(q^i,\gamma_\alpha(q^i),  \gamma_s(q^i)).$$
The Hamilton-Jacobi problem consists in finding a function $\gamma_s: Q \to \r$ such that
$$H\left(q^i, \rho^i_\alpha \ds\frac{\partial \gamma_s}{\partial q^i}, \gamma_s(q^i)\right) = E ,$$
for some $E \in \r$.

Denote by $\xi_H^\gamma \in Sec(E)$ the section defined by
$$\xi_H^\gamma = pr_1 \circ \xi_H \circ \gamma ,$$
as shown in the following diagram
$$
\xymatrix{
{E^* \times \r} \ar[r]^{\xi_H} \ar[d]_\pi & \tec \ar[d]^{pr_1} \\
Q \ar[r]^{\xi_H^\gamma} \ar@/^5mm/[u]^\gamma &  E \ar@/_7mm/[u]_{(\Id_E, T \gamma \circ \rho)}
}
$$

This diagram does not necessarily commute. Indeed, $\xi_H$ and $\xi_H^\gamma$ are not necessarily $\gamma$-related, that is,
\begin{equation}\label{gamrel}
\xi_H \circ \gamma = \left(\Id_E, T \gamma \circ \rho\right) \circ \xi_H^\gamma
\end{equation}
does not necessarily hold. 

We can compute $\xi_H \circ \gamma$ in local coordinates and, from \eqref{explocxi}, obtain
\begin{equation}\label{xixi0}
\begin{array}{cl}
 \xi_H \circ \gamma  =  &   \left( \ds\frac{\partial H}{\partial y_\alpha} \circ \gamma \right) \widetilde{\mathcal{X}}_\alpha \circ \gamma - \left[ \rho^i_\alpha \left( \ds\frac{\partial H}{\partial q^i} \circ \gamma\right) + \C_{\alpha \beta}^\eta \gamma_\eta \left( \ds\frac{\partial H}{\partial y_\beta} \circ \gamma \right) + \gamma_\alpha \left( \ds\frac{
\partial H}{\partial s} \circ \gamma \right) \right] \widetilde{\mathcal{V}}_\alpha \circ \gamma \\ \noalign{\bigskip}
& + \left[ \gamma_\alpha \left( \ds\frac{\partial H}{\partial y_\alpha} \circ \gamma \right) - H \circ \gamma \right] \widetilde{\mathcal{V}}_s \circ \gamma .
\end{array}
\end{equation}
On the other hand, from the definition of $\xi_H^\gamma$ and  \eqref{xixi0} we deduce that the local expression of $\xi_H^\gamma$ is given by
\begin{equation}\label{xixi}
\xi_H^\gamma = \left(\ds\frac{\partial H}{\partial y_\alpha} \circ \gamma \right) e_\alpha
\end{equation}
and therefore from \eqref{roialfa}, we have 
\begin{equation}
\left(\Id_E, T \gamma \circ \rho\right) \circ \xi_H^\gamma = \left( \left(\ds\frac{\partial H}{\partial y_\alpha} \circ \gamma \right) e_\alpha , \left(\ds\frac{\partial H}{\partial y_\alpha} \circ \gamma \right) \rho^i_\alpha \left( \ds\frac{\partial}{\partial q^i} + \ds\frac{\partial \gamma_\beta}{\partial q^i} \ds\frac{\partial}{\partial y_\beta} + \ds\frac{\partial   \gamma_s}{\partial q^i} \ds\frac{\partial}{\partial s} \right)      \right) .
\end{equation}
Thus, equation \eqref{gamrel} holds if, and only if, the following relations are satisfied
\begin{align}
- \left[ \rho^i_\alpha \left( \ds\frac{\partial H}{\partial q^i} \circ \gamma\right) + \C_{\alpha \beta}^\eta \gamma_\eta \left( \ds\frac{\partial H}{\partial y_\beta} \circ \gamma \right) + \gamma_\alpha \left( \ds\frac{
\partial H}{\partial s} \circ \gamma \right) \right] = & \rho^i_\beta \left( \ds\frac{\partial H}{\partial y_\beta} \circ \gamma \right) \ds\frac{\partial \gamma_\alpha}{\partial q^i} \,, \label{hj1} \\ \noalign{\bigskip}
 \gamma_\alpha \left( \ds\frac{\partial H}{\partial y_\alpha} \circ \gamma \right) - H \circ \gamma = &\rho^i_\alpha \left(\ds\frac{\partial H}{\partial y_\alpha} \circ \gamma \right)  \ds\frac{\partial   \gamma_s}{\partial q^i} \, . \label{hj2}
\end{align}
Assume now that $\gamma$ is locally the $1$-jet of a function, namely $\gamma = j^1   \gamma_s$, that is,
\begin{equation}
\gamma(q^i) = \left(q^i, \rho^i_\alpha \ds\frac{\partial   \gamma_s}{\partial q^i},   \gamma_s(q^i) \right) .
\end{equation}
Then, performing the above substituting of $\gamma_\alpha$, and recalling that the structure functions of the Lie algebroid $E$ satisfy relations \eqref{ecest}, we can see that equations \eqref{hj1} and \eqref{hj2} transform into
\begin{align}
\d^E (H \circ \gamma) = & 0 \,, \label{HJe1} \\
H \circ \gamma = & 0 \label{HJe2}.
\end{align}
Hence, taking into account the second part of Proposition \ref{propLeg}, we have proved the following result.
\begin{theorem}\label{HJthHamsec}
Let $\gamma   \in Sec(\ers)$ such that $j_\gamma : F_\gamma \to \tech$, $i_\gamma : \gamma(Q) \to E^* \times \r$ is a Legendrian Lie subalgebroid of $\tech$, where $F_\gamma$ is the vector bundle over $\gamma(Q)$ given by \eqref{Falpha}. Then, $\xi_H$ and $\xi_H^\gamma$ are $\gamma$-related if, and only if, \eqref{HJe2} holds.
\end{theorem}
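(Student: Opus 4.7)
The plan is to combine the two pieces already set up in the text: the local characterization of $\gamma$-relatedness given by equations \eqref{hj1}--\eqref{hj2}, and the characterization of Legendrian subalgebroids from Proposition \ref{propLeg}. First I would invoke Proposition \ref{propLeg}(2) to rewrite the hypothesis that $(j_\gamma, i_\gamma)$ is Legendrian as the local condition that $\gamma$ is the $1$-jet of its $\r$-component, namely
\[
\gamma(q^i) = \left( q^i, \rho^i_\alpha\,\ds\frac{\partial \gamma_s}{\partial q^i},\, \gamma_s(q^i) \right),
\]
so that $\gamma_\alpha = \rho^i_\alpha \,\partial \gamma_s/\partial q^i$.

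Next I would recall that, by the computation performed just before the statement, $\xi_H \circ \gamma = (\Id_E, T\gamma \circ \rho)\circ \xi_H^\gamma$ is locally equivalent to the conjunction of \eqref{hj1} and \eqref{hj2}. Substituting $\gamma_\alpha = \rho^i_\alpha \,\partial \gamma_s/\partial q^i$ into these identities and using the structure equations \eqref{ecest} of the Lie algebroid (in order to turn the combination of $\rho^i_\beta \partial \gamma_\alpha/\partial q^i - \rho^i_\alpha \partial H/\partial q^i - \mathcal{C}^\eta_{\alpha\beta}\gamma_\eta\, \partial H/\partial y_\beta$ into $-\rho^i_\alpha\, \partial(H\circ\gamma)/\partial q^i$), the paper already shows that \eqref{hj1} reduces to \eqref{HJe1}, i.e.\ $\d^E(H\circ\gamma) = 0$, while \eqref{hj2} reduces to \eqref{HJe2}, i.e.\ $H\circ\gamma = 0$.

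At this point the key observation is that \eqref{HJe2} automatically implies \eqref{HJe1}: if $H\circ \gamma$ vanishes identically on $Q$, then so does its Lie algebroid differential $\d^E(H\circ\gamma)$, by \eqref{defunc}. Consequently, under the Legendrian hypothesis, the full $\gamma$-relatedness condition collapses to the single scalar equation $H\circ\gamma = 0$. The converse direction is trivial: if $\xi_H$ and $\xi_H^\gamma$ are $\gamma$-related, then \eqref{hj2} holds, which under the Legendrian substitution is exactly \eqref{HJe2}. This would close the if-and-only-if.

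The only delicate point I anticipate is carefully verifying that the substitution of $\gamma_\alpha = \rho^i_\alpha \,\partial \gamma_s/\partial q^i$ into \eqref{hj1} genuinely cancels down to $\rho^i_\alpha\,\partial(H\circ\gamma)/\partial q^i = 0$ by means of the structure equations \eqref{ecest}; this is the routine but slightly bookkeeping-heavy step and it is the one already treated explicitly above the theorem. Apart from that, the argument is essentially a matter of translating the previously established local identities through the Legendrian hypothesis, together with the elementary remark that $H\circ\gamma = 0$ forces $\d^E(H\circ\gamma)=0$.
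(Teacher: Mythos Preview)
Your proposal is correct and follows essentially the same approach as the paper: invoke Proposition~\ref{propLeg}(2) to translate the Legendrian hypothesis into $\gamma_\alpha = \rho^i_\alpha\,\partial\gamma_s/\partial q^i$, then use the pre-established local equivalence \eqref{hj1}--\eqref{hj2} together with the structure equations to reduce to \eqref{HJe1}--\eqref{HJe2}. You make one point explicit that the paper leaves tacit, namely that \eqref{HJe2} automatically entails \eqref{HJe1}, which is what collapses the two conditions to the single equation $H\circ\gamma=0$.
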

Equations \eqref{HJe1} and \eqref{HJe2} are indistinctly referred as a Hamilton-Jacobi equation with respect to a contact structure on $\tech$. A section $\gamma$ fulfilling the assumption of the theorem and the Hamilton–Jacobi equation will be called a {\it solution} of the Hamilton–Jacobi problem for $H$.

\subsection{The Hamilton-Jacobi equations for the evolution section}

Let $\E_{v_H} \in Sec(\tech)$ be the evolution section associated to a Hamiltonian function $H:E^*\times \r\to \r$ and given in local coordinates by \eqref{evloc}. Assume that $\gamma$ is a section of $\pi : E^* \times \r \to Q$ such that, in local coordinates, it reads
$$\gamma(q^i)=(q^i,\gamma_\alpha(q^i),  \gamma_s(q^i)).$$
Denote by $\E_{v_H}^\gamma \in Sec(E)$ the section defined by
$$\E_{v_H}^\gamma = pr_1 \circ \E_{v_H} \circ \gamma .$$
A direct computation shows that $\E_{v_H}$ and $\E_{v_H}^\gamma$ are $\gamma$-related, that is,
\begin{equation}\label{gamrelev}
\E_{v_H} \circ \gamma = \left(\Id_E, T \gamma \circ \rho\right) \circ \E_{v_H}^\gamma
\end{equation}
if, and only if,
\begin{align}
- \left[ \rho^i_\alpha \left( \ds\frac{\partial H}{\partial q^i} \circ \gamma\right) + \C_{\alpha \beta}^\eta \gamma_\eta \left( \ds\frac{\partial H}{\partial y_\beta} \circ \gamma \right) + \gamma_\alpha \left( \ds\frac{
\partial H}{\partial s} \circ \gamma \right) \right] = & \rho^i_\beta \left( \ds\frac{\partial H}{\partial y_\beta} \circ \gamma \right) \ds\frac{\partial \gamma_\alpha}{\partial q^i} \,, \label{hjev1} \\ \noalign{\bigskip}
 \gamma_\alpha \left( \ds\frac{\partial H}{\partial y_\alpha} \circ \gamma \right)  = &\rho^i_\alpha \left(\ds\frac{\partial H}{\partial y_\alpha} \circ \gamma \right)  \ds\frac{\partial  \gamma_s}{\partial q^i} \, . \label{hjev2}
\end{align}
If we assume now that $\gamma= j^1   \gamma_s$, then
$$\gamma_\alpha= \rho^i_\alpha \ds\frac{\partial   \gamma_s}{\partial q^i}$$
and so \eqref{hjev2} is fulfilled, and \eqref{hjev1} becomes
\begin{equation}\label{HJeqev}
\d^E(H \circ \gamma) = 0 .
\end{equation}
Therefore, by Proposition \ref{propLeg}, we have the following.
\begin{theorem}
Let $\gamma \in Sec(\ers)$ such that $j_\gamma : F_\gamma \to \tech$, $i_\gamma : \gamma(Q) \to E^* \times \r$ is a Legendrian Lie subalgebroid of $\tech$. Then, $\E_{v_H}$ and $\E_{v_H}^\gamma$ are $\gamma$-related if, and only if, \eqref{HJeqev} holds.
\end{theorem}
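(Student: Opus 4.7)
The plan is to mirror, in the evolution-section setting, the argument that established Theorem \ref{HJthHamsec} for the Hamiltonian section, using the local computations already carried out in equations \eqref{evloc}, \eqref{hjev1}, \eqref{hjev2} together with the characterization of Legendrian sections given by Proposition \ref{propLeg}. The key observation is that the Legendrian hypothesis forces $\gamma$ to be (locally) the $1$-jet of its $\mathbb{R}$-component, which is exactly what is needed to collapse the two conditions \eqref{hjev1}--\eqref{hjev2} into the single equation \eqref{HJeqev}.

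First, I would fix local coordinates $(q^i)$ on $Q$, a local basis $\{e_\alpha\}$ of sections of $E$, and write $\gamma(q^i) = (q^i, \gamma_\alpha(q^i), \gamma_s(q^i))$. Using the local expression \eqref{evloc} of $\E_{v_H}$, together with the identification of $(\Id_E, T\gamma\circ\rho)\circ \E_{v_H}^\gamma$ through \eqref{roialfa}, I would reproduce the computation (already sketched just before the statement) showing that the $\gamma$-relatedness \eqref{gamrelev} is equivalent to the system \eqref{hjev1}--\eqref{hjev2}. This is a direct component-by-component matching in the local basis $\{\widetilde{\mathcal{X}}_\alpha, \widetilde{\mathcal{V}}_\alpha, \widetilde{\mathcal{V}}_s\}$.

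Next, I would invoke the second part of Proposition \ref{propLeg}: the Legendrian hypothesis on $(j_\gamma, i_\gamma)$ is equivalent to $\gamma$ being locally the $1$-jet of a function on $Q$, namely $\gamma = j^1 \gamma_s$, which in coordinates reads
\begin{equation}
\gamma_\alpha = \rho^i_\alpha \, \ds\frac{\partial \gamma_s}{\partial q^i}.
\end{equation}
Substituting this into \eqref{hjev2} makes both sides coincide, so the second condition becomes an identity. The remaining work is to show that, under the same substitution, equation \eqref{hjev1} is equivalent to \eqref{HJeqev}. Using the local expression of $\d^E$ from \eqref{defunc} applied to $H\circ\gamma$, together with the chain rule on $H(q^i,\gamma_\alpha(q^i),\gamma_s(q^i))$, one obtains
\begin{equation}
\d^E(H\circ\gamma) = \rho^i_\alpha\Bigl[\ds\frac{\partial H}{\partial q^i}\circ\gamma + \ds\frac{\partial H}{\partial y_\beta}\circ\gamma\,\ds\frac{\partial \gamma_\beta}{\partial q^i} + \ds\frac{\partial H}{\partial s}\circ\gamma\,\ds\frac{\partial \gamma_s}{\partial q^i}\Bigr] e^\alpha,
\end{equation}
and this should match \eqref{hjev1} after applying the Lie-algebroid structure equations \eqref{ecest} to rewrite $\mathcal{C}^\eta_{\alpha\beta}\gamma_\eta$ in terms of derivatives of $\rho^i_\alpha$. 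The main (really the only) technical point is the bookkeeping of this substitution: one has to verify that the terms involving $\mathcal{C}^\eta_{\alpha\beta}\rho^j_\eta\partial\gamma_s/\partial q^j$ cancel against the antisymmetric combination $\rho^j_\alpha\partial\rho^i_\beta/\partial q^j - \rho^j_\beta\partial\rho^i_\alpha/\partial q^j$ coming from the difference between $\partial\gamma_\alpha/\partial q^i$ and what is produced by $\d^E(H\circ\gamma)$, precisely as in the Hamiltonian-section case.

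Finally, since all equivalences are pointwise local and $\d^E$ is a tensorial operator, the equivalence between $\gamma$-relatedness and $\d^E(H\circ\gamma)=0$ holds globally. The expected obstacle is purely computational: making sure the structure-equation manipulation used in the proof of Theorem \ref{HJthHamsec} really does apply verbatim to the evolution section, the only difference being that the term $H\circ\gamma$ in \eqref{hj2} is absent here, which is precisely why only \eqref{HJeqev} (and not both \eqref{HJe1} and \eqref{HJe2}) arises.
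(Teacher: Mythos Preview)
Your proposal is correct and follows essentially the same approach as the paper: the argument is the text immediately preceding the theorem statement, where the $\gamma$-relatedness of $\E_{v_H}$ and $\E_{v_H}^\gamma$ is reduced to \eqref{hjev1}--\eqref{hjev2}, the Legendrian hypothesis is converted via Proposition~\ref{propLeg} into $\gamma=j^1\gamma_s$, and then the substitution $\gamma_\alpha=\rho^i_\alpha\,\partial\gamma_s/\partial q^i$ trivializes \eqref{hjev2} and turns \eqref{hjev1} into \eqref{HJeqev} using the structure equations \eqref{ecest}. You have also correctly pinpointed the sole difference from the Hamiltonian-section case, namely the absence of the $H\circ\gamma$ term in \eqref{hjev2}.
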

Equation \eqref{HJeqev} is referred as a \textit{Hamilton-Jacobi equation for the evolution section}.

\begin{remark}
When $E$ is the standard Lie algebroid $T Q$, then the above theorem generelizes Theorem 5 in \cite{LLM-21}.
\end{remark}

\subsection*{Acknowledgments}
  Modesto Salgado and Silvia Souto     acknowledge financial support from Grant  Ministerio de Ciencia, Innovación y Universidades (Spain), project PID2021-125515NB-C21. A. Anahory Simoes, L. Colombo and M. de Le\'on acknowledge financial support from the Spanish Ministry of Science and Innovation, under grants PID2019-106715GB-C21 and the Severo Ochoa Programme for Centres of Excellence in R\&D (CEX2019-000904-S).


\begin{thebibliography}{99}

\itemsep 1pt plus 1pt

\bibitem{AM-1978}
R.A. Abraham, J.E. Marsden.  Foundations of Mechanics.
(Second Edition),
 Benjamin-Cummings Publishing Company, New York, 1978.

\bibitem{ALLM-2020}
A. Anahory Simoes, M. de León, M. Lainz, D. Martín de Diego. Contact geometry for simple thermodynamical systems with friction. Proc. A.  476 (2020), no. 2241, 20200244, 16 pp. 

\bibitem{AB-Med} L. Abrunheiro and L. Colombo. Lagrangian Lie subalgebroids generating
dynamics for second-order mechanical systems on Lie algebroids. Mediterr. J. Math., 15(2):Paper No. 57, 19, 2018.

\bibitem{ACLSS-2023} 
Alexandre Anahory Simoes, Leonardo Colombo, Manuel de León, Modesto Salgado,  Silvia Souto.  Jacobi structure for dissipative mechanical systems  on Lie algebroids. arXiv:2305.18735.

\bibitem{ACLMMP-2023} 
Alexandre Anahory Simoes, Leonardo Colombo, Manuel de León, Juan Carlos Marrero,  David Mart\'in de Diego, Edith Padr\'on.  Reduction by symmetries of contact mechanical systems on Lie groups. arXiv:2306.07028.
 
 
\bibitem{Bl1-1976} D. E. Blair. Contact Manifolds in Riemannian Geometry, ser. Lecture Notes in Mathematics. Berlin Heidelberg: Springer-Verlag, 1976, isbn: 978-3-540-07626-1.

\bibitem{Bl1-2002}  D. E. Blair. Riemannian Geometry of Contact and Symplectic Manifolds, ser. Progress in Mathematics. Basel: Birkhäuser, 2002, isbn: 978-1-4757-3604-5.

\bibitem{BW-1958} W. M. Boothby and H. C. Wang. On Contact Manifolds. Annals of Mathematics,
vol. 68, no. 3, pp. 721–734, 1958, issn: 0003-486X. doi: 10.2307/1970165.

\bibitem{BOY}
C. P. Boyer. Completely Integrable Contact Hamiltonian Systems and Toric Contact Structures on $\mathbb{S}^2  \times \mathbb{S}^3$. Symmetry, Integrability and Geometry: Methods and Applications7 (2011) 1-22.

\bibitem{BCT-2017}A. Bravetti, H. Cruz, and D. Tapias. Contact Hamiltonian Mechanics. Annals
of Physics, vol. 376, pp. 17–39, Jan. 2017, issn: 00034916. doi: 10.1016/j.aop.
2016.11.003. arXiv: 1604.08266.

\bibitem{Brave-2017} A. Bravetti. Contact Hamiltonian Dynamics: The Concept and Its Use. Entropy,
vol. 19, no. 12, p. 535, Oct. 11, 2017, issn: 1099-4300. doi: 10.3390/e19100535.  

\bibitem{CGMMMR-2017} J. F. Cariñena, X. Gràcia, G. Marmo, E. Martínez, M. C. Muñoz-Lecanda, and
N. Román-Roy. Geometric Hamilton–Jacobi Theory. Int. J. Geom. Methods Mod.
Phys., vol. 03, no. 07, pp. 1417–1458, 2006.

\bibitem{chossat} P. Chossat. The hyperbolic model for edge and texture detection in the primary visual cortex
J. Math. Neurosci. 10 (2020), Paper No. 2, 30 pp.

\bibitem{CeMaRa}
H. Cendra, J. Marsden and T. Ratiu. Lagrangian reduction by stages. Memoirs of the American Mathematical Society. Vol. 52, no. 722, 2001. 

\bibitem{CCM-2018} F. M. Ciaglia, H. Cruz, and G. Marmo. Contact manifolds and dissipation,
classical and quantum. Annals of Physics, vol. 398, pp. 159–179, Nov. 1, 2018.

\bibitem{C-JGM}  L. Colombo. Second-order constrained variational problems on lie algebroids: Applications to optimal control. Journal of Geometric Mechanics,
9(1), 2017.

\bibitem{CdLL22} L. Colombo, M. de León, and A. López-Gordón. Contact Lagrangian systems subject to impulsive constraints. Journal of Physics A: Mathematical and Theoretical 55.42 (2022): 425203.

\bibitem{CLPRM-2018} L. Colombo, M. de León, P. D. Prieto-Martínez, and N. Román-Roy. Geometric
Hamilton–Jacobi theory for higher-order autonomous systems. J. Phys. A: Math.
Theor., vol. 47, no. 23, p. 235 203, 2014.



\bibitem{CLMMM-2006}
J. Cort\'{e}s, M. de Le\'{o}n, J.C. Marrero, D. Mart\'{\i}n de Diego, E.
Mart\'{\i}nez. A survey of. Lagrangian mechanics and control on
Lie algebroids and groupoids. { Int. J. Geom. Methods Mod. Phys.} 3
(2006), no. 3, 509-558.


\bibitem{LGL-2022} M. de León, J. Gaset, and M. Lainz. Inverse problem and equivalent contact
systems. Journal of Geometry and Physics, p. 104 500, Mar. 2022. 

\bibitem{LGLMR-2021} M. de León, J. Gaset, M. Lainz, M. C. Muñoz-Lecanda, and N. Román-Roy. Higher-order contact mechanics. Annals of Physics, vol. 425, p. 168 396, Feb. 1,
2021. 

\bibitem{LGLRR-2021} M. de León, J. Gaset, M. Lainz, X. Rivas, and N. Román-Roy. Unified Lagrangian-Hamiltonian Formalism for Contact Systems. Fortschr. Phys., vol. 68, no. 8,
p. 2 000 045, 2020. 

\bibitem{LJL-2020} M. de León, V. M. Jiménez, and M. Lainz. Contact Hamiltonian and Lagrangian
systems with nonholonomic constraints. Journal of Geometric Mechanics, 13 (2021), no. 1, 25–53.

\bibitem{LL-19}
M. de Le\'{o}n, M. Lainz.  Contact Hamiltonian systems. J. Math. Phys. 60 (2019), no. 10, 102902, 18 pp.

\bibitem{ML1-2019}  M. de León and M. Lainz. Singular Lagrangians and precontact Hamiltonian systems. Int. J. Geom. Methods Mod. Phys., vol. 16, no. 10, p. 1 950 158, Aug. 7, 2019.

 \bibitem{ML-2020} M. de León and M. Lainz. Infinitesimal symmetries in contact Hamiltonian systems. Journal of Geometry
and Physics, vol. 153, p. 103 651, Jul. 2020.

\bibitem{LLLR-23}
M. de León, M. Lainz, A. López-Gordón, X. Rivas.  Hamilton-Jacobi theory and integrability for autonomous and non-autonomous contact systems. J. Geom. Phys. 187 (2023), Paper No. 104787, 22 pp.

\bibitem{LLM-21} 
M. de León, M. Lainz, and A. Muñiz-Brea.  The Hamilton–Jacobi Theory for
Contact Hamiltonian Systems. Mathematics, vol. 9, no. 16, p. 1993, Aug. 20, (2021).



\bibitem{MLMR-2021} M. de León, M. Laínz, M. C. Muñoz-Lecanda, and N. Román-Roy. Constrained Lagrangian dissipative contact dynamics. J. Math. Phys., vol. 62, no. 12, p. 122 902, Dec. 1, 2021.

\bibitem{LMM-2005}
M. de León, J.C. Marrero, E. Martínez. Lagrangian submanifolds and dynamics on Lie algebroids. J. Phys. A: Math. Gen. 38, (2005),
R241-R308.

\bibitem{LMSV-09}
M. de León, D. Martín de Diego, M. Salgado, S. Vilari\~{n}o.  $k$-symplectic formalism on Lie algebroids. J.Phys.A. 42: 385209, (2009).



\bibitem{dLR} M. de León and P. R. Rodrigues. Methods of Differential Geometry in Analytical Mechanics. Amsterdam, North-Holland, isbn: 0-444-88017-8.

\bibitem{LS} M. de León and C. Sardón, Cosymplectic and contact structures for time-dependent and dissipative Hamiltonian systems. J.Phys.A. \textbf{50}: 255205, (2017).



%\bibitem{ELS-2021} O. Esen, M. de León, and C. Sardón, “A Hamilton–Jacobi theory for implicit 
%differential systems”, Journal of Mathematical Physics, vol. 59, no. 2, p. 022 902,
%Feb. 1, 2018, issn: 0022-2488. doi: 10.1063/1.4999669.


%\bibitem{ELSZ-2021} O. Esen, M. de León, C. Sardón, and M. Zajşc, “Hamilton–Jacobi formalism on
%locally conformally symplectic manifolds”, J. Math. Phys., vol. 62, no. 3, p. 033 506,
%Mar. 1, 2021,

\bibitem{ELLMS-2021} O. Esen, M. Lainz, M. de León, and J. C. Marrero. Contact Dynamics: Legendrian
and Lagrangian Submanifolds. Mathematics, vol. 9, no. 21, p. 2704, 21 Jan. 2021.

\bibitem{Gaset2020}
J. Gaset, X. Gràcia, M. C. Muñoz-Lecanda, X. Rivas, N. Román-Roy. New contributions to the Hamiltonian and Lagrangian contact formalisms for dissipative mechanical systems and their symmetries. Int. J. Geom. Methods Mod. Phys., 17(6): 2050090, 2020.

\bibitem{GB-2019} A. Ghosh and C. Bhamidipati. Contact geometry and thermodynamics of black
holes in AdS spacetimes. Phys. Rev. D, vol. 100, no. 12, p. 126 020, Dec. 19, 2019.

\bibitem{Her1930}
G. Herglotz. Berührungstransformationen. Lectures at the University of Göttingen, 1930.
 

\bibitem{HM-1990}
P.J. Higgins, K. Mackenzie.  Algebraic constructions in the category of Lie algebroids. J. of Algebra. 129 (1990), 194-230.


\bibitem{ILMM-1997} R. Ibáñez, M. de León, J. C. Marrero, and D. Martín de Diego, Co-isotropic and
Legendre - Lagrangian submanifolds and conformal Jacobi morphisms. Journal
of Physics A: Mathematical and General, vol. 30, no. 15, pp. 5427–5444, Aug. 7, 1997.


\bibitem{LS-2012} M. Leok and D. Sosa. Dirac structures and Hamilton-Jacobi theory for Lagrangian
mechanics on Lie algebroids. Journal of Geometric Mechanics, vol. 4,
no. 4, p. 421, 2012.

\bibitem{L-2001} F. Loose. Reduction in contact geometry. J. Lie Theory, vol. 11, no. 1, pp. 9–22,
2001.

\bibitem{LCdL-ACC} A. López-Gordón, L. Colombo, M. de León. Nonsmooth Herglotz variational principle. 2023 American Control Conference (ACC). IEEE, p. 3376-3381, 2023.

\bibitem{Mack-1987}
K. Mackenzie. Lie groupoids and Lie algebroids in differential geometry. London Math. Soc. Lect. Note Series 124 (Cambridge Univ. Press) (1987).

\bibitem{Mack-1995}
K. Mackenzie.  Lie algebroids and Lie
pseudoalgebras. Bull. London Math. Soc. 27 (1995) 97-147.

\bibitem{MV-10}
D. Martín de Diego, S. Vilariño. Reduced classical field theories: k-cosymplectic formalism on Lie algebroids. (2010) J. Phys. A: Math. Theor.  43, 325204.

\bibitem{Mart-2001}
E. Martínez.  Geometric formulation of Mechanics on Lie algebroids. In Proceedings of the VIII Fall Workshop on Geometry and Physics, Medina del Campo, 1999, Publicaciones de la RSME, 2
(2001), 209-222.

\bibitem{Mart-2001(2)}
E. Martínez. Lagrangian mechanics on Lie algebroids. Acta Appl. Math.  67,  (2001),  no. 3, 295-320.

\bibitem{Eduardoho}
E. Martínez.   Higher-order variational calculus on Lie algebroids. J. Geometric Mechanics,
7, Issue 1 (2015) pp. 81-108.



\bibitem{Mart-2005}
E. Martínez. Classical field theory on Lie
algebroids: variational aspects. J. Phys. A. 38  (2005),  no. 32, 7145-7160.

\bibitem{MC-2008} 
T. Mestdag and M. Crampin. Invariant Lagrangians, mechanical connections and the
Lagrange-Poincaré equations. J. Phys. A: Math. Theor. 41 (2008) 344015 (20pp).

\bibitem{Tom} 
T. Mestdag, B. Langerock.  A Lie algebroid framework for non-holonomic systems. Journal of
Physics A: Mathematical and General 38, 5 (2005) 1097.

\bibitem{MNSS-2021} R. Mrugala, J. D. Nulton, J. Christian Schön, and P. Salamon. Contact structure
in thermodynamic theory. Reports on Mathematical Physics, vol. 29, no. 1, pp. 109–121, Feb. 1, 1991.

\bibitem{petitot} Jean Petitot. {\it Elements of neurogeometry. Lecture notes in morphogenesis}. Berlin: Springer; 2017.

\bibitem{SMM-2002(2)}
W. Sarlet, T. Mestdag, E. Martínez.  Lie algebroid structures on a class of affine bundles.  J. Math. Phys.  43  (2002),  no. 11, 5654--5674.


\bibitem{V-2017} M. Visinescu. Contact Hamiltonian systems and complete integrability. presented
at the TIM17 Physics Conference, Timisoara, Romania, 2017, p. 020 002.

\bibitem{V-2012} L. Vitagliano. Geometric Hamilton–Jacobi field theory. Int. J. Geom. Methods
Mod. Phys., vol. 09, no. 02, p. 1 260 008, Mar. 1, 2012.

\bibitem{W-1971} A. Weinstein. Symplectic manifolds and their Lagrangian submanifolds. Advances
in mathematics, vol. 6, no. 3, pp. 329–346, 1971.


 \bibitem{Weins-1996}
A. Weinstein. Lagrangian mechanics and groupoids.
Mechanics day (Waterloo, ON, 1992), 207-231, Fields Inst. Commun. 7, Amer. Math. Soc., Providence, RI, 1996.
 


%\bibitem{ELLSZ-2022}
%O. Esen, M. de León, M. Lainz, C. Sardón, M. Zajac. {\it Reviewing the geometric Hamilton-Jacobi theory concerning Jacobi and Leibniz identities}. {J. Phys. A.} {\bf 55}  (2022), No.  40, Paper No. 403001, 62 pp.










 
\end{thebibliography}
\end{document}